\newcommand{\AEm}{\textnormal{\AE}}
\newcommand{\N}{\mathbb{N}}
\newcommand{\Q}{\mathbb{Q}}
\newcommand{\R}{\mathbb{R}}
\newcommand{\sfd}{{\sf d}}
\renewcommand{\d}{{\mathrm d}}
\newcommand{\restr}[1]{\lower3pt\hbox{$|_{#1}$}}
\newcommand{\la}{{\langle}}                  
\newcommand{\ra}{{\rangle}}
\newcommand{\e}{{\rm{e}}}                          
\newcommand{\X}{{\rm X}}
\newcommand{\Y}{{\rm Y}}
\newcommand{\Lip}{\operatorname{Lip}}
\newcommand{\LIP}{\operatorname{LIP}}
\newcommand{\F}{{\rm F}}
\newcommand{\relMass}{\widetilde{\Mass{}}}
\newcommand{\kcur}{\mathbf{M}_k}
\newcommand{\ncur}{\mathbf{N}}
\newcommand{\kncur}{\mathbf{N}_k}
\newcommand{\mass}{\mathbb{M}}
\newcommand{\cM}{\mathbf{M}}
\newcommand{\cN}{\mathbf{N}}
\newcommand{\cF}{\mathbf{F}}
\newcommand{\cP}{\mathbf{P}}
\newcommand{\mytag}[2]{%
  \text{#1}%
  \@bsphack
  \begingroup
    \@onelevel@sanitize\@currentlabelname
    \edef\@currentlabelname{%
      \expandafter\strip@period\@currentlabelname\relax.\relax\@@@%
    }%
    \protected@write\@auxout{}{%
      \string\newlabel{#2}{%
        {\color{black}#1}%
        {\thepage}%
        {\@currentlabelname}%
        {\@currentHref}{}%
      }%
    }%
  \endgroup
  \@esphack
}
\def\XXint#1#2#3{{\setbox0=\hbox{$#1{#2#3}{\int}$ }
\vcenter{\hbox{$#2#3$ }}\kern-.6\wd0}}
\newcommand{\curr}[1]{\llbracket #1 \rrbracket}
\newcommand{\Mass}[1]{\mathbb{M} #1}
\newcommand{\B}{\mathbb{B}}
\newcommand{\Poly}[1]{\mathcal{P}_{#1}}
\newcommand{\Flatnorm}{\mathbb{F}}
\newtheorem{theorem}{Theorem}[section]
\newtheorem{corollary}[theorem]{Corollary}
\newtheorem{lemma}[theorem]{Lemma}
\newtheorem{proposition}[theorem]{Proposition}
\theoremstyle{definition}
\newtheorem{definition}[theorem]{Definition}
\newtheorem{conjecture}[theorem]{Conjecture}
\newcounter{Counter}
\newtheorem{remark}[theorem]{Remark}
\title[Structure of Metric $1$-currents]{Structure of Metric $1$-currents: approximation by normal currents and representation results}
\author{David Bate}
\author{Emanuele Caputo}
\author{Jakub Tak\'{a}\v{c}}
\author{Phoebe Valentine}
\author{Pietro Wald}
\keywords{Metric currents, Flat chain conjecture, Arens-Eells space, Curve fragments, Homotopy formulas}
\subjclass[2020]{49Q15, 28A75, 30L99}
\begin{document}

\begin{abstract}
    We prove the $1$-dimensional flat chain conjecture in any complete and quasiconvex metric space, namely that metric $1$-currents can be approximated in mass
    by normal $1$-currents.
    The proof relies on a new Banach space isomorphism theorem, relating metric $1$-currents and their boundaries to the Arens-Eells space.

    As a by-product, any metric $1$-current in a complete and separable metric space
    can be represented as the integral superposition of oriented $1$-rectifiable sets, thus dropping a finite dimensionality condition from previous results of Schioppa \cite{Schioppa_derivations, Schioppa_currents}.

    The connection between the flat chain conjecture and the representation result
    is provided by a structure theorem for metric $1$-currents in Banach spaces,
    showing that any such current can be realised as the restriction to a Borel set of a boundaryless normal $1$-current. This generalizes, to any Banach space, the $1$-dimensional case of a recent result of Alberti-Marchese in Euclidean spaces \cite{AlbertiMarcheseFlatChains}.
    The argument of Alberti-Marchese requires the strict polyhedral approximation theorem of Federer for normal $1$-currents, which we obtain in Banach spaces.
\end{abstract}

\maketitle

\tableofcontents

\section{Introduction}

The need for a generalized notion of a surface emerged from the desire to tackle geometric variational problems which resisted smooth techniques, most notably Plateau's problem. Federer and Fleming realised that de Rham's notion of currents, which was developed in duality with differential forms, was in fact a compactification of the class of oriented smooth manifolds embedded in $\R^n$ \cite{FedererFleming1960} and so provided a notion of a generalised surface highly suitable to such variational problems. We shall refer to these currents as \textit{classical currents}.

To tackle such problems in metric spaces, Ambrosio and Kirchheim defined a notion of \textit{metric currents} for ambient metric spaces in \cite{AK00}, but their construction is not a direct generalisation of a classical current because it is necessary to make stronger requirements of a metric current to account for the reduced structure of the space. 
Indeed, the theory of classical currents is developed in duality with differential forms, however, for a metric space definition to be suitably general, it is necessary to move away from the strong assumption of a differentiable structure and thus differential forms. 
Instead, we consider $(k+1)$-tuples of Lipschitz functions $(f,\pi_1,\dots,\pi_k)$, where $f$ is bounded; a collection which we denote $D^k(\X)$. 
A $k$-metric current, for $k \in\N$, is then defined as a functional on $D^k(\X)$ which is multilinear, jointly continuous, local and satisfies a finite mass condition (all to be explicitly defined in Section \ref{sec:preliminaries}). The locality in particular ensures that the functional depends in a weak sense on the derivatives of the last $k$ entries, echoing the action on the differential form $f\d \pi_1 \wedge \dots \wedge\d \pi_k$. We will denote the space of metric $k$-currents on $\X$ by $\cM_k(\X)$.

Metric currents have been used successfully in various problems in analysis and geometry (\cite{BassoMartiWenger2023,LangKleiner2020,Song2024,Wenger2011}). However, their structure, even on Euclidean space $\R^n$, is only partially understood. The most famous conjecture in this vein was formulated in Ambrosio and Kirchheim's foundational paper and is now known as the flat chain conjecture \cite[Pag. 68]{AK00}.
The question is whether metric $k$-currents on $\R^n$ correspond to classical $k$-flat chains for $1 \le k\le n$. 
Partial progress has been made: positive results are known for $k=1$ and $k=n$. The latter is a corollary of a deep result by De Philippis and Rindler \cite{DePhilippisRindler_A_free} where they prove that $n$-dimensional metric currents in $\R^n$ correspond to the space of Lebesgue integrable functions. 
The case of $k=1$ was proved by Schioppa in \cite{Schioppa_currents}. 
The pivotal tool in his proof is a powerful representation result, which, in the case of $k=1$, states that every metric $1$-current can be written as a superposition of oriented $1$-rectifiable sets. 
This representation result holds in metric spaces satisfying a technical finite-dimensionality condition which is satisfied, for example, if the ambient space is metrically doubling. Due to the generality of his representation result, Schioppa in fact proves something more general than the Euclidean flat chain conjecture. The author proves that normal $1$-currents are flat (in fact, mass) dense in the space of metric $1$-currents in quasiconvex metric spaces satisfying the same standing technical assumption.
\par 
In Euclidean space, it is known that normal currents are flat chains, thus Schioppa's density result is equivalent to the flat chain conjecture. Schioppa's result can thus be seen as answering a metric version of the conjecture. Thus the metric analogue of the flat chain conjecture asks whether normal currents are dense under the flat norm in the space of metric currents. 

\begin{conjecture}[Flat chain conjecture]
Given $k \in \N$, every $k$-current on a metric space $\X$ can be approximated by normal $k$-currents under the flat norm:
    \[\overline{\{\text{normal }k\text{-currents}\}}^{\text{Flat norm}}=\kcur(\X).\]
\end{conjecture}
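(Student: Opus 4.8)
Since the conjecture in full generality is open, the plan is to treat the case $k=1$ under the stated hypotheses that $\X$ is complete and quasiconvex, and in fact to prove the stronger conclusion that normal $1$-currents are \emph{mass}-dense in $\cM_1(\X)$. A routine first reduction lets us assume $\X$ separable: the mass measure of any $T\in\cM_1(\X)$ is carried by a $\sigma$-compact set $S$, so $T$ ``lives in'' the separable complete space $\overline S$, while the quasiconvex connecting paths needed at the very end may still be drawn inside the original $\X$. The core of the argument is then to represent $T$ as an integral superposition of oriented $1$-rectifiable sets: to produce a finite measure $\mu$ on the space of Lipschitz curve fragments $\gamma\colon K_\gamma\to\X$ (with $K_\gamma\subseteq\R$ compact) satisfying $\int\mathrm{len}(\gamma)\,d\mu(\gamma)<\infty$ and $T=\int\curr{\gamma}\,d\mu(\gamma)$, where $\curr{\gamma}$ denotes the metric $1$-current carried by $\gamma$.

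To build this representation I would go through a structure theorem in Banach spaces. Embed $\X$ isometrically into a Banach space $V$ (for instance $V=\AEm(\X)$, which is separable). Working in $V$, the key functional-analytic input is a Banach-space isomorphism theorem identifying the space of boundaries of metric $1$-currents in $V$, equipped with the minimal-filling (flat) norm, with the Arens--Eells space $\AEm(V)$; this is what makes available, in infinite dimensions, the scheme of Alberti--Marchese, where in $\R^n$ one instead uses local compactness and elliptic-type tools. The second ingredient is the strict polyhedral approximation theorem of Federer for normal $1$-currents, which I would reprove in $V$: every normal $1$-current is close in flat norm to a polyhedral $1$-chain with essentially the same mass and boundary mass. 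With these two tools, the Alberti--Marchese argument, generalized to $V$, yields the structure theorem: $\iota_\#T=N\restr B$ for some \emph{boundaryless} normal $1$-current $N$ on $V$ and some Borel set $B\subseteq V$. Decomposing the normal cycle $N$ into a superposition of Lipschitz loops (Smirnov/Paolini--Stepanov), restricting the decomposition to $B$, and pushing back through $\iota$ produces the desired superposition of curve fragments for $T$.

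From the representation, approximation by normal currents is comparatively soft. By separability of $\X$ the map $\gamma\mapsto\curr{\gamma}$ is a Borel map into $\cM_1(\X)$ with the mass norm, and $\int\mathbb M(\curr{\gamma})\,d\mu\le\int\mathrm{len}(\gamma)\,d\mu<\infty$, so it is Bochner $\mu$-integrable; approximating it in $L^1(\mu;\mathbb M)$ by simple functions shows $T$ is mass-approximable by finite sums $\sum_i c_i\,\curr{E_i,\tau_i}$ of oriented $1$-rectifiable currents. Each summand is in turn mass-approximable by normal currents: an oriented $1$-rectifiable set is, up to an $\mathcal H^1$-null set, a countable disjoint union of bi-Lipschitz images of compact subsets of $\R$; discarding a tail that is small in mass leaves finitely many such pieces, and each becomes a genuine Lipschitz curve on an interval once the countably many ``gaps'' are joined by connecting arcs of comparable length --- this is exactly where quasiconvexity of $\X$ is used. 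The associated currents are normal (boundary a difference of two Dirac masses, finite mass), finite sums of them converge in mass, and chaining the approximations produces normal currents converging to $T$ in mass, a fortiori in flat norm.

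The step I expect to be the main obstacle is the structure theorem in the Banach-space setting, and within it its two pillars: the Arens--Eells isomorphism theorem --- the functional-analytic heart of the matter, which must replace the Euclidean tools entirely --- and the extension of Federer's strict polyhedral approximation to Banach spaces, where the lack of local compactness defeats the classical cubical-deformation construction. A second, genuine difficulty is compatibility of the loop decomposition of $N$ with the Borel restriction $\restr B$: one needs the decomposition to be acyclic / free of cancellation, so that restricting each loop to $B$ and re-superposing recovers $N\restr B$ exactly; and similarly one must check that the quasiconvex gap-filling at the end does not introduce cancellation that would corrupt the mass bookkeeping.
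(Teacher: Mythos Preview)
Your plan inverts the logical flow of the paper and takes a substantial detour. In the paper, the $k=1$ flat chain conjecture is a five-line corollary of the Arens--Eells isomorphism theorem alone (Corollary~\ref{coro:approximation_by_normal_currents}): approximate $\partial T\in\AEm(\X)$ by molecules $m_i$, pull back through the isomorphism $\partial\colon\cM_1(\X)/\ker\partial\to\AEm(\X)$ to get $[T_i]\to[T]$, and choose representatives with $\partial T_i=m_i$; since a molecule has finite mass, each $T_i$ is normal and $\Mass(T_i-T)\to0$. The structure theorem and the representation result are proved \emph{afterwards}, in Sections~\ref{sec:deformations}--\ref{sec:representation_results}, and in fact the paper's proof of the structure theorem (through Lemma~\ref{lemma:relmass_equal_to_mass}) \emph{uses} Corollary~\ref{coro:approximation_by_normal_currents} as an input. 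You list the isomorphism theorem among your tools, so your route is not circular, but you are traversing the entire machinery of the paper to reach a conclusion that follows immediately once that single tool is in hand.

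Your final gap-filling step, as written, does not give mass approximation. For a fragment $\gamma\colon K\to\X$, joining \emph{all} gaps of $K$ by quasigeodesics adds mass comparable to $\mathcal{L}^1([\min K,\max K]\setminus K)$, which is a fixed quantity independent of any approximation parameter (take $K$ a fat Cantor set). The fix is to first delete the finitely many largest gaps, splitting $K$ into finitely many pieces each lying in a short interval with residual gap measure $<\varepsilon$, and only then fill; the added mass is then $\lesssim\mathrm{qc}(\X)\cdot\varepsilon$. With this correction your argument goes through, but the paper's direct route avoids curve fragments, the structure theorem, and this entire endgame.
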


It turns out that a connectivity assumption is inherently required: for example, a Cantor set of positive Lebesgue measure in the real line is a metric space which supports no non-trivial normal $1$-currents, but the space of metric $1$-currents is rich. However the finite dimensionality assumption is not apriori necessary. The two main result of this work are the $1$-dimensional case of the flat chain conjecture and the representation formula for metric $1$-currents as a superposition of oriented $1$-rectifiable sets. Both results are obtained in complete metric spaces without the finite dimensionality assumption. Quasiconvexity of the underlying space is required for the flat chain conjecture, while the representation formula holds in any complete metric space.

\textbf{A note on related work.}
While writing this paper, we became aware that Arroyo-Rabasa and Bouchitté have simultaneously and independently obtained results related to those presented here \cite{ArroyoRabasaBouc2025}. 
To the best of our knowledge, their techniques draw on optimal transport and functions of bounded variation, whereas ours are typically linked to metric geometry and analysis on metric spaces.

\subsection{Flat chain conjecture}

Our approach to the $1$-dimensional case of the flat chain conjecture is based on the study of the Arens-Eells space, a Banach space that we denote by $(\AEm(\X),\|\cdot\|_{\AEm(\X)})$.
It can be defined as the completion of \emph{molecules}, namely the vector space of finitely supported
real-valued
measures $m$ with $m(\X)=0$, equipped with the norm
\begin{equation*}
\|m\|_{\AEm(\X)}:=\inf\left\{\sum_{i=1}^n|a_i|\sfd(x_i,y_i)\colon m=\sum_{i=1}^na_i(\delta_{x_i}-\delta_{y_i})\right\},
\end{equation*}
where the infimum is taken over all $n\in\N$, $a_i\in\R$, and $x_i,y_i\in\X$
such that
\begin{equation*}
    m=\sum_{i=1}^na_i(\delta_{x_i}-\delta_{y_i}).
\end{equation*}
Importantly, the Arens-Eells space may be characterised as the predual
of the Banach space of Lipschitz functions vanishing at a fixed distinguished point,
normed with the Lipschitz constant; see \cite{WeaverBook}.
As such, elements of $\AEm(\X)$ can be seen as acting on Lipschitz functions
similarly to boundaries of metric $1$-currents.
In fact, for every $T\in\cM_1(\X)$, we may regard $\partial T$ as an element of
$\AEm(\X)$ and moreover $T\in\cM_1(\X)\mapsto\partial T\in\AEm(\X)$ is a bounded linear operator\footnote{For this reasoning to be valid, it is necessary to also assume that $\X$ is separable. The separability assumption can, however, be omitted, under the assumption that every cardinality is an Ulam number, an axiom consistent with ZFC. A large portion of the theory of Ambrosio and Kirchheim makes this assumption; see page 13 of their manuscript \cite{AK00}. Lastly, we remark that if one defined joint continuity in \emph{topological} terms, instead of relying on \emph{sequences}, the separability assumption could also be lifted and in the separable case one would obtain the same theory.}.
As will be clear shortly, for the purpose of studying the flat chain conjecture,
it is of interest to determine when $\partial$ is a (Banach space) isomorphism.
Related to this is the recent work of De Pauw \cite{DePauw} where a connection of the Arens-Eells space to \emph{flat $0$-chains} is explored. For us, on the other hand, the point is to study specifically the relationship of the Arens-Eells space to the space of \emph{metric $1$-currents of finite mass}, which is what we achieve in our first main result.

\begin{theorem}[Isomorphism theorem, cf.\ Theorem \ref{thm:isomorphism}]
\label{thm:isomorphism_intro}

Let $(\X,\sfd)$ be a complete and separable metric space.
Then 
\[\partial\colon \cM_1(\X)/\mathrm{ker}(\partial)\to \AEm(\X)\]
is a Banach space isomorphism if and only if $\X$ is quasiconvex.
In the latter case, we have
\begin{equation}
\label{eq:1.2inThm}
    \mathrm{qc}(X)^{-1}\|\partial T\|_{\AEm(\X)}\leq \|[T]\|\leq \|\partial T\|_{\AEm(\X)} \quad\text{for every $T\in\cM_1(\X)$,}
\end{equation}
with the constants in \eqref{eq:1.2inThm} being optimal.\footnote{Unless the metric space $\X$ consists of a point,
in which case the optimal constants are $0$.} Here $\mathrm{qc}(\X):=\inf\{C\in[1,\infty]\colon\X\text{ is }C\text{-quasiconvex}\}$
and $[T]\in\cM_1(\X)/\ker(\partial)$ denotes the equivalence class of $T\in\cM_1(\X)$
in $\cM_1(\X)/\ker(\partial)$.
\end{theorem}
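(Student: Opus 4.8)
Throughout we use the identification under which, for $T\in\cM_1(\X)$, the boundary $\partial T$ acts on $f\in\Lip_0(\X)$ by $\partial T(f)=T(1,f)$. Two soft remarks organise everything. First, the kernel of $T\mapsto\partial T$ seen as a map into $\AEm(\X)$ equals $\{T:\partial T=0\}$ as currents, because $T(1,f)=T(1,f-f(x_0))$ by locality in the constant slot; hence the induced map on $\cM_1(\X)/\ker(\partial)$ is well defined and injective. Second, the defining property of mass gives $|\partial T(f)|\le\Lip(f)\,\mass(T)$, so the duality $\AEm(\X)^*=\Lip_0(\X)$ yields $\|\partial T\|_{\AEm(\X)}\le\mass(T)$; minimising over the representatives of $[T]$ gives one of the two estimates of \eqref{eq:1.2inThm} and shows $\partial$ is a norm-one operator which is, in particular, continuous from the mass topology to $\AEm(\X)$. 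Thus the theorem reduces to showing: (a) if $\X$ is quasiconvex then $\partial$ is onto and bounded below, with the sharp constant $\mathrm{qc}(\X)$; (b) if $\X$ is not quasiconvex then $\partial$ is not an isomorphism.

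For (a), fix $C>\mathrm{qc}(\X)$ and $T\in\cM_1(\X)$ and set $\beta:=\partial T\in\AEm(\X)$. By the definition of the Arens--Eells norm we decompose $\beta=\sum_{j\ge1}m_j$ into molecules with $\sum_j\|m_j\|_{\AEm(\X)}$ as close to $\|\beta\|_{\AEm(\X)}$ as we wish: pick $m_1$ both close to $\beta$ and of norm close to $\|\beta\|_{\AEm(\X)}$, replace $\beta$ by $\beta-m_1$, and iterate. Writing $m_j=\sum_i a_i(\delta_{x_i}-\delta_{y_i})$ with $\sum_i|a_i|\sfd(x_i,y_i)$ close to $\|m_j\|_{\AEm(\X)}$, replace every pair $(y_i,x_i)$ by a $C$-quasiconvex curve joining them, take the associated integral $1$-current, and sum over $i$; this produces $N_j\in\cM_1(\X)$ with $\partial N_j=m_j$ and $\mass(N_j)\le C\|m_j\|_{\AEm(\X)}$ up to an error summable in $j$. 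Since $(\cM_1(\X),\mass)$ is a Banach space and $\sum_j\mass(N_j)<\infty$, the series $N:=\sum_j N_j$ converges in mass with $\mass(N)$ essentially bounded by $C\|\beta\|_{\AEm(\X)}$, and by the continuity of $\partial$ from the mass topology to $\AEm(\X)$ we get $\partial N=\sum_j m_j=\beta=\partial T$. Thus $N\in[T]$, and letting the errors vanish and $C\downarrow\mathrm{qc}(\X)$ gives $\|[T]\|\le\mathrm{qc}(\X)\,\|\partial T\|_{\AEm(\X)}$, completing the bounds of \eqref{eq:1.2inThm}. Running the same construction on an arbitrary $\beta\in\AEm(\X)$ gives surjectivity, so $\partial$ is a Banach space isomorphism.

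For (b), the crux is the identity
\[
\inf\bigl\{\,\mass(T):T\in\cM_1(\X),\ \partial T=\delta_x-\delta_y\,\bigr\}=\rho(x,y),\qquad x,y\in\X,
\]
where $\rho$ is the induced length (intrinsic) metric, possibly $+\infty$. The inequality ``$\le$'' is immediate from integral currents carried by near-geodesics, while ``$\ge$'' follows by decomposing a normal $1$-current $T$ with $\partial T=\delta_x-\delta_y$ into curve fragments, using that the length of a fragment dominates the $\rho$-distance of its endpoints, and applying Kantorovich duality \emph{with respect to $\rho$}, for which the resulting transport cost equals $\rho(x,y)$ regardless of the marginals that arise. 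Granting this: if $\partial$ is an isomorphism, pick (by surjectivity) some $T$ with $\partial T=\delta_x-\delta_y$; using $\|\delta_x-\delta_y\|_{\AEm(\X)}=\sfd(x,y)$ and $\|[T]\|\le K\|\partial T\|_{\AEm(\X)}$ for a fixed $K$, the identity gives $\rho(x,y)\le K\,\sfd(x,y)$ for all $x,y$, so $\X$ is $K$-quasiconvex. Conversely, if $\X$ is not quasiconvex no finite $K$ can work, and if moreover some pair $x,y$ is joined by no rectifiable curve then $\rho(x,y)=\infty$ keeps $\delta_x-\delta_y$ out of the range of $\partial$. The same identity fixes the constants: $\|[T]\|\le C\,\|\partial T\|_{\AEm(\X)}$ forces $C\ge\mathrm{qc}(\X)$, so $\mathrm{qc}(\X)$ is optimal, while the coefficient $1$ in the companion estimate is optimal because $\|\partial T\|_{\AEm(\X)}/\|[T]\|\to1$ along currents carried by near-geodesics; if $\X$ is a single point all the norms vanish.

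The main obstacle is the inequality ``$\ge$'' above, i.e.\ the sharp lower bound $\mass(T)\ge\rho(x,y)$ for normal $T$ with $\partial T=\delta_x-\delta_y$: a naive test-function estimate only produces $\mass(T)\ge\sfd(x,y)$, and rescaling a $\rho$-Lipschitz function to make it $\sfd$-Lipschitz would be circular in the ``only if'' direction, so one genuinely needs the decomposition of a normal $1$-current into curve fragments, together with a careful treatment of the fragments' gaps and of the (possibly non-atomic) decomposing measure. By contrast, the only delicacy in (a) is arranging the molecule decomposition so the associated currents are summable in mass; after that, completeness of $(\cM_1(\X),\mass)$ and the already-established continuity of $\partial$ do the rest.
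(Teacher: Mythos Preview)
Your argument is correct, and in fact you prove the inequalities in the form $\|\partial T\|_{\AEm(\X)}\le\|[T]\|\le\mathrm{qc}(\X)\,\|\partial T\|_{\AEm(\X)}$, which is what the paper's proof actually establishes as well (the display \eqref{eq:1.2inThm} appears to have the two constants swapped, as the paper's own remark ``$\|\partial\|=1$ and $\|\partial^{-1}\|=\mathrm{qc}(\X)$'' confirms).

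The route, however, is genuinely different from the paper's in the ``if'' direction. You construct the inverse of $\partial$ \emph{directly}: decompose $\beta=\partial T$ into a mass-summable series of molecules, fill each atom pair with a quasigeodesic current, and sum in mass. The paper instead argues via the adjoint: it shows $\|\partial^* f\|\ge\mathrm{qc}(\X)^{-1}\LIP(f)$ by testing on a single quasigeodesic current, invokes the closed range theorem to deduce that $\partial^*$ (and hence $\partial$) is an isomorphism, and reads off $\|\partial^{-1}\|\le\mathrm{qc}(\X)$. Your approach is more elementary and constructive, avoiding weak$^*$ topology and closed range machinery; the paper's is shorter once one is willing to use that machinery, and its key estimate is a one-line computation rather than an iterated molecule decomposition.

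For the ``only if'' direction and the sharpness of $\mathrm{qc}(\X)$, both arguments rest on the same idea (the paper's Lemma~\ref{lemma:isomorphism_and_quasiconvexity}): apply the Paolini--Stepanov decomposition to the acyclic part of a normal current with prescribed finite boundary, and bound the total curve length below by the intrinsic distance. One small imprecision in your write-up: since any $T$ with $\partial T=\delta_x-\delta_y$ is automatically normal, the Paolini--Stepanov result yields a superposition of \emph{genuine rectifiable curves} with endpoint marginals exactly $\delta_y$ and $\delta_x$; there are no fragment gaps to worry about, and the marginals are not merely ``whatever arises'' but are pinned down by the decomposition. With that correction your lower bound $\mass(T)\ge\rho(x,y)$ goes through cleanly.
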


Given a metric current $T\in\cM_1(\X)$, $\partial T$
may be approximated in $\AEm(\X)$ by a sequence of molecules $\{m_i\}_i$.
The invertibility of $\partial\colon\cM_1(\X)/\ker(\partial)\to\AEm(\X)$
ensures that we can find $T_i\in\cM_1(\X)$ with $\partial T_i=m_i$ so that
$[T_i-T]\rightarrow 0$ in the norm of $\cM_1(\X)/\ker(\partial)$.
In particular, $\{T_i\}_i$ is a sequence of normal $1$-currents and, possibly adding
boundaryless currents to $T_i$, we are then able to approximate $T$ in mass
with normal $1$-currents.
Thus, the following approximation result is a corollary of Theorem \ref{thm:isomorphism_intro}. 

\begin{theorem}[cf.\ Corollary \ref{coro:approximation_by_normal_currents}]
\label{flat_chain_intro}
Let $(\X,\sfd)$ be a complete and quasiconvex metric space.
Let $T\in\cM_1(\X)$ and suppose its mass measure is inner regular by compact sets.
Then there is a sequence $T_i$ of normal $1$-currents such that
$\Mass{(T-T_i)}\rightarrow 0$.

\end{theorem}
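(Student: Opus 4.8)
The whole weight of this result is carried by the Isomorphism Theorem (\Cref{thm:isomorphism_intro}); granting that, the argument is soft, and my plan is to first reduce to separable spaces and then transfer a molecule approximation of $\partial T$.

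\textbf{Step 1 (reduction to the separable case).} Since the mass measure of $T$ is inner regular by compact sets, all of its mass sits on some $\sigma$-compact --- hence separable --- Borel set, so $\supp T=\supp\|T\|$ is contained in a separable closed subset $\overline A\subseteq\X$. Using completeness and quasiconvexity of $\X$ I would enlarge $\overline A$ to a \emph{closed, separable, quasiconvex} subspace $\Y\subseteq\X$ with $\supp T\subseteq\Y$: iterate the operation ``adjoin to a separable set, for every pair of points of a fixed countable dense subset and every $n\in\N$, a rectifiable curve realising quasiconvexity up to a factor $1+\tfrac1n$'', then take closures; that $\Y$ is genuinely quasiconvex --- and not merely rectifiably connected --- follows from one further approximation joining an arbitrary pair of points of $\Y$ to the resulting curve skeleton. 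Since $\|T\|$ is concentrated on the closed set $\Y$, locality of metric currents produces a unique $T'\in\cM_1(\Y)$ with $\iota_\#T'=T$, where $\iota\colon\Y\hookrightarrow\X$ is the isometric inclusion; moreover $\iota_\#$ preserves mass, commutes with $\partial$, and hence sends normal currents to normal currents. It therefore suffices to prove the statement for $T'$ on the complete, separable, quasiconvex space $(\Y,\sfd)$, so from here on I assume $\X$ is separable.

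\textbf{Step 2 (transfer a molecule approximation).} By definition $\AEm(\X)$ is the completion of the molecules, so pick molecules $m_i\to\partial T$ in $\AEm(\X)$; being finite signed combinations of Dirac masses of total mass $0$, each $m_i$ is a $0$-current of finite mass. By \Cref{thm:isomorphism_intro} the boundary map $\partial\colon\cM_1(\X)/\ker(\partial)\to\AEm(\X)$ is a Banach space isomorphism; in particular $\partial\colon\cM_1(\X)\to\AEm(\X)$ is onto, so there are $T_i\in\cM_1(\X)$ with $\partial T_i=m_i$, and these are already normal (finite mass, and finite-mass boundary). Applying the upper estimate in \eqref{eq:1.2inThm} to $T_i-T$,
\[
\|[T_i]-[T]\|=\|[T_i-T]\|\le\|\partial(T_i-T)\|_{\AEm(\X)}=\|m_i-\partial T\|_{\AEm(\X)}\xrightarrow[i\to\infty]{}0 .
\]

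\textbf{Step 3 (conclusion, and the main obstacle).} The norm on $\cM_1(\X)/\ker(\partial)$ is the quotient of the mass norm, so for each $i$ I may choose $B_i\in\ker(\partial)$ with $\Mass{(T_i+B_i-T)}\le\|[T_i-T]\|+\tfrac1i$ and set $\tilde T_i:=T_i+B_i$. Then $\tilde T_i\in\cM_1(\X)$ and $\partial\tilde T_i=\partial T_i=m_i$ has finite mass, so $\tilde T_i$ is a normal $1$-current; since $\Mass{(T-\tilde T_i)}\le\|[T_i-T]\|+\tfrac1i\to0$, this proves the theorem. (The correction $B_i$ is precisely the ``boundaryless current added to $T_i$'' mentioned in the introduction.) I expect the only genuinely non-formal point of the proof to be the construction in Step 1 --- extracting a separable quasiconvex subspace on which $T$ is concentrated, and verifying that quasiconvexity is inherited --- which is exactly where the compact inner-regularity hypothesis enters; everything of real substance has been delegated to \Cref{thm:isomorphism_intro}, and through it to the surjectivity of $\partial$ together with the boundedness of its inverse.
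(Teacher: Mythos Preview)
Your proof is correct and follows essentially the same route as the paper's: reduce to a closed separable quasiconvex subspace on which $T$ is supported (the paper cites \cite[Lemma 2.1]{Hajlasz2018} for this, which is exactly the iterative construction you sketch), approximate $\partial T$ by molecules, invert $\partial$ via the Isomorphism Theorem, and correct by boundaryless currents coming from the quotient norm. The only places where you are more informal than the paper are the separable-quasiconvex-subspace construction and the restriction $T\mapsto T'$ (handled in the paper via an appendix lemma on restrictions of currents with inner regular mass), but both are routine.
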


\subsection{Old and new representation results}

The simplest and most concrete examples of metric $1$-currents consist of Lipschitz curves.
For any metric space $\X$ and Lipschitz curve $\gamma\colon[0,1]\to\X$,
\begin{equation*}
    \curr{\gamma}(f,\pi):=\int_0^1(f\circ\gamma)_t(\pi\circ\gamma)_t'\,\d t, \qquad (f,\pi)\in D^1(\X),
\end{equation*}
defines a metric $1$-current.
It is then natural to wonder how far can a metric $1$-current be from a linear combination
of currents as above.
More precisely, one might look for `concrete' representations of metric $1$-currents in terms
of Lipschitz curves.

Such questions are not new and a lot is already known.
In Euclidean space, Smirnov \cite{Smirnov1993}
proves several representation results for normal $1$-currents, which
were later extended to metric spaces by Paolini-Stepanov \cite{PaoliniStepanov_acyclic, PaoliniStepanov_cycles}.
In particular, they prove that, on a complete and separable metric space $\X$, any normal $1$-current $N\in\cN_1(\X)$
can be represented as
\begin{equation}
\label{eq:normal_decomposition}
    N =\int\curr{\gamma}\,\d\eta(\gamma), \qquad
\Mass(N)=\int\Mass(\curr{\gamma})\,\d\eta(\gamma),
\end{equation}
where $\eta$ is a finite Borel measure on the space of Lipschitz curves; see \cite{PaoliniStepanov_acyclic,PaoliniStepanov_cycles} for details.
(Representations with good properties of the boundary are also available;
see \cite{PaoliniStepanov_cycles, Smirnov1993}.
See also \cite{Ambrosio_Renzi_Vitillar_local_Smirnov} for recent
results on locally normal $1$-currents.)\par
For integral $1$-currents there are even better representations.
In \cite[Thm.\ 5.3]{BonicattoDelNinPasqualetto2022}, it is proven that in any complete metric space, for every integral $1$-current $T$, there exists an at most countable collection $\{\gamma_i\}_{i\in I}$ of Lipschitz curves, injective outside of the endpoints, such that
\begin{equation}\label{eq:integral_decomposition}
    T=\sum_{i\in I} \curr{\gamma_i},\quad \Mass(T)=\sum_{i \in I} \Mass(\curr{\gamma_i}),\quad \Mass(\partial T)=\sum_{i \in I} \Mass(\partial \curr{\gamma_i}).
\end{equation}

For general $1$-currents, representations such as \cref{eq:normal_decomposition}
are not possible, at least
if one insists to consider curves $\gamma\colon[0,1]\to\X$.
To see this, let $K\subset\R$ be a totally disconnected compact set of positive Lebesgue measure,
and consider the $1$-metric current $\curr{K}$ on $\R$ defined as
$\curr{K}(f,\pi):=\int_{K}f(t)\pi'(t)\,\d t$ for $(f,\pi)\in D^1(\R)$.
It is then clear that, for $T=\curr{K}$, \cref{eq:normal_decomposition} holds for no $\eta$
on rectifiable curves.
To recover a representation result for general $1$-currents,
it is then necessary to consider a larger family of building blocks.
A viable option consists of \emph{curve fragments}, namely
Lipschitz functions $\gamma\colon {\rm dom}(\gamma)\to\X$, where ${\rm dom}(\gamma)$ is a compact
subset of $\R$; see Section \ref{sec:preliminaries} for the precise definition.
Indeed, combining results of Schioppa, it is possible to show that, in some cases,
an analogue of \cref{eq:normal_decomposition} holds for general
metric $1$-currents, provided we replace rectifiable curves with curve fragments.
More precisely, if $\X$ is a complete and separable metric space and $T\in\cM_1(\X)$,
\cite[Theorem 3.7]{Schioppa_currents} together with \cite[Theorem 3.98]{Schioppa_derivations}
provide sufficient conditions\footnote{Schioppa proves that the action of a current is given by differentiation along curve fragments, as in \eqref{eq:decomposition_Schioppa}, whenever the module of derivations associated to 
the mass measure $\|T\|$ is finitely generated; see \cite{Schioppa_currents, Schioppa_derivations} for more details.
This condition is satisfied if $\X$ is doubling (see \cite[Corollary 5.99]{Schioppa_derivations} or \cite[Corollary 4.13]{Schioppa_currents})
or more generally if $\X$ has finite Hausdorff dimension
(combine \cite[Theorem 5.3]{BKO23} with \cite[Corollary 3.93]{Schioppa_derivations}).}
for the existence
of a finite Borel measure $\eta$ concentrated on biLipschitz curve fragments
and a Borel measurable function $(t,\gamma)\mapsto G(t,\gamma)\in\R$ so that
\begin{equation}\label{eq:decomposition_Schioppa}
\begin{aligned}
T(f,\pi)&=\int\int_{{\rm dom}(\gamma)} (f\circ\gamma)_t(\pi\circ\gamma)_t'\,G(t,\gamma)\,\d t\,\d\eta(\gamma), &(f,\pi)\in D^1(\X), \\
\|T\|(B)&=\int\int_{{\rm dom}(\gamma)}(\chi_B\circ\gamma)_t\, G(t,\gamma)\,\d t\, \d\eta(\gamma),
&B\subset\X \text{ Borel},
\end{aligned}
\end{equation}
where the fact that the expressions are well-defined is part of the thesis.
See Section \ref{sec:preliminaries_currents} for the definition
mass measure $\|T\|$.
\par
The theorems mentioned so far sit in a series of results where better representations are obtained by increasing the regularity of the current and, for normal and integral currents,
only completeness and separability of the underlying metric space are required.
This was a further motivation to prove a representation result for general metric $1$-currents in the same generality, thus removing the additional assumptions from \cite{Schioppa_derivations,Schioppa_currents}.

\begin{theorem}[Representation of metric $1$-currents, cf. Theorem \ref{thm:representation_currents}]
\label{thm:representation_currents_preliminaries}
Let $(\X,\sfd)$ be a complete and separable metric space. Let $T \in \cM_1(\X)$. Then there exists a finite Borel measure $\eta$ on curve fragments such that
\begin{equation}\label{eq:main_result_representation_intro}
    T=\int \curr{\gamma} \,\d \eta(\gamma)\quad\text{ and }\quad\Mass(T)=\int \Mass(\curr{\gamma})\,\d \eta(\gamma).
\end{equation}
\end{theorem}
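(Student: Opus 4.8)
The plan is to transport the problem into a Banach space, where the structure theorem for metric $1$-currents applies, and then to restrict a Paolini--Stepanov decomposition of a boundaryless normal current.

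\emph{Step 1: reduction to a Banach ambient space.} Fix an isometric embedding $\iota\colon\X\hookrightarrow\V$ of $\X$ into a separable Banach space $\V$ (for instance $\V=C([0,1])$ via the Banach--Mazur theorem, or $\V=\AEm(\X)$ via $x\mapsto\delta_x-\delta_{x_0}$). Since $\X$ is complete, $\iota(\X)$ is closed in $\V$, and $\iota_\#T\in\cM_1(\V)$ has mass measure $\|\iota_\#T\|=\iota_\#\|T\|$, concentrated on $\iota(\X)$. As $\iota$ is an isometric embedding, pushing currents and curve fragments back through $\iota^{-1}$ preserves all masses; hence it suffices to prove \eqref{eq:main_result_representation_intro} for $\iota_\#T$ in $\V$, using curve fragments with image in $\iota(\X)$.

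\emph{Step 2: structure theorem plus decomposition of the normal part.} By the structure theorem for metric $1$-currents in Banach spaces there are a boundaryless normal current $N\in\cN_1(\V)$ and a Borel set $B\subseteq\V$ with $\iota_\#T=N\llcorner B$; since $\|N\llcorner B\|=\|N\|\llcorner B$ is concentrated on $\iota(\X)$, we may replace $B$ by $B\cap\iota(\X)$ and assume $B\subseteq\iota(\X)$. Now apply the Paolini--Stepanov decomposition \eqref{eq:normal_decomposition} to $N$ (legitimate because $\V$ is complete and separable), taking the curves non-cancelling -- e.g.\ injective outside the endpoints, as in the acyclic version -- so that $\|\curr{\sigma}\|=\sigma_\#(|\dot\sigma|\,\mathcal L^1)$ for $\tilde\eta$-a.e.\ $\sigma$. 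One obtains a finite Borel measure $\tilde\eta$ on Lipschitz curves $\sigma\colon[0,1]\to\V$ with $N=\int\curr{\sigma}\,\d\tilde\eta(\sigma)$ and, as a consequence of tightness, $\|N\|=\int\|\curr{\sigma}\|\,\d\tilde\eta(\sigma)$. Restriction to $B$ commutes with the integral, so
\[
\iota_\#T=\int\bigl(\curr{\sigma}\llcorner\sigma^{-1}(B)\bigr)\,\d\tilde\eta(\sigma),\qquad
\Mass(\iota_\#T)=\|N\|(B)=\int\Mass\bigl(\curr{\sigma}\llcorner\sigma^{-1}(B)\bigr)\,\d\tilde\eta(\sigma),
\]
where $\curr{\sigma}\llcorner\sigma^{-1}(B)$ is the current $(f,\pi)\mapsto\int_{\sigma^{-1}(B)}(f\circ\sigma)_t(\pi\circ\sigma)_t'\,\d t$ and the mass identity uses non-cancellation of $\sigma$; in particular this superposition is tight.

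\emph{Step 3: realising the restricted curves as curve fragments.} It remains to rewrite, for $\tilde\eta$-a.e.\ $\sigma$, the current $\curr{\sigma}\llcorner\sigma^{-1}(B)$ as (a superposition of) genuine curve fragments of $\X$, i.e.\ with \emph{compact} domain, in a way that assembles into a \emph{finite} Borel measure $\eta$. Writing $E:=\sigma^{-1}(B)\subseteq[0,1]$ (Borel, with $\sigma(E)\subseteq\iota(\X)$), the obstruction is that $E$ need not be essentially closed, so $\sigma|_{E}$ is only a Lipschitz map on a Borel set. The device is reparametrisation: fixing $L>\Lip(\sigma)$ and the Lipschitz homeomorphism $\phi(t):=|E\cap[0,t]|+\tfrac{\Lip(\sigma)}{L}\,|E^c\cap[0,t]|$, one has $\curr{\sigma}\llcorner E=\curr{\psi}\llcorner\phi(E)$ with $\psi:=\sigma\circ\phi^{-1}$ being $L$-Lipschitz, so the complement $E^c$ gets "compressed" by the factor $\Lip(\sigma)/L<1$; combined with an inner-regular decomposition of $E$ into disjoint compact pieces and laying the corresponding pieces of $\psi$ out along $\R$ in their natural order, separated by gaps proportional to the compressed complement, one realises $\curr{\sigma}\llcorner\sigma^{-1}(B)$ as a curve fragment $\gamma_\sigma$ (or a controlled finite number of them) with image in $\iota(\X)$ and $\Mass(\curr{\gamma_\sigma})=\Mass(\curr{\sigma}\llcorner\sigma^{-1}(B))$. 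A standard measurable-selection argument makes $\sigma\mapsto\gamma_\sigma$ $\tilde\eta$-measurable; then $\eta$ is the push-forward of $\tilde\eta$ along $\sigma\mapsto\iota^{-1}\circ\gamma_\sigma$, a finite Borel measure on curve fragments of $\X$, and pushing the identities of Step 2 back through $\iota^{-1}$ yields $T=\int\curr{\gamma}\,\d\eta(\gamma)$ and $\Mass(T)=\int\Mass(\curr{\gamma})\,\d\eta(\gamma)$.

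The main obstacle is Step 3: converting $\curr{\sigma}\llcorner\sigma^{-1}(B)$ into curve fragments with \emph{compact} domains while keeping the total mass of $\eta$ finite, because $\sigma^{-1}(B)$ is only Borel and a naive chopping into countably many compact pieces per curve would make $\eta$ infinite; the reparametrisation/concatenation above is what prevents this. Everything else -- the reduction to a Banach space, the appeal to the (separately established) Banach-space structure theorem, and the bookkeeping of masses (all forced to be equalities by tightness of the Paolini--Stepanov decomposition and isometry of $\iota$) -- is comparatively routine.
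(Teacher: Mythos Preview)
Your broad strategy matches the paper's: embed in a Banach space, apply the structure theorem to write $\iota_\#T=N\llcorner B$ with $\partial N=0$, decompose $N$ via Paolini--Stepanov, and restrict the curves. Steps~1 and~2 are fine. Step~3, however, has a genuine gap, and it is precisely the one you flag as the main obstacle.

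Your reparametrisation $\phi(t)=|E\cap[0,t]|+\tfrac{\Lip(\sigma)}{L}\,|E^c\cap[0,t]|$ is a biLipschitz homeomorphism of $[0,1]$, so $\phi(E)$ is still merely Borel: nothing has been gained towards compactness. The subsequent ``laying out'' of compact pieces $K_n\subset E$ along $\R$ cannot produce a single fragment (or a uniformly bounded finite number) in general. To keep the concatenated map Lipschitz you need the gap between consecutive pieces to be at least comparable to the distance between their endpoint values in $\V$; for a generic Borel $E$ there are infinitely many pieces with endpoint values spread over all of $\sigma([0,1])$, forcing an unbounded (hence non-compact) domain. Shrinking gaps or rescaling pieces destroys the Lipschitz bound; splitting each curve into its countably many pieces makes $\eta$ infinite. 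The measurable-selection sentence hides, rather than solves, this.

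The paper avoids the whole issue by a device you do not use. Before invoking the structure theorem, one enlarges the Banach space by one dimension so that ${\rm supp}(\iota_\#T)$ lies in an affine hyperplane; the structure theorem (its ``moreover'' clause) then yields $N\llcorner{\rm supp}(\iota_\#T)=\iota_\#T$, i.e.\ the restriction set is \emph{closed}. Now pick a $\sigma$-compact $E\subset{\rm supp}(\iota_\#T)$ of full $\|\iota_\#T\|$-measure and set $F:=\overline{E}\subset{\rm supp}(\iota_\#T)$. Since $\|N\|\llcorner F=\|\iota_\#T\|$, one gets $\|N\|(F\setminus E)=0$, hence for $\tilde\eta$-a.e.\ $\sigma$ the compact-domain fragment $\sigma|_{\overline{\sigma^{-1}(E)}}$ coincides as a current with $\curr{\sigma}\llcorner\sigma^{-1}(E)$, and the Borel map $\sigma\mapsto\sigma|_{\overline{\sigma^{-1}(E)}}$ (Lemma~\ref{lemma:meas_restriction_frag}) pushes $\tilde\eta$ to the required $\eta$. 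Without the hyperplane trick, $\overline{E}\setminus E$ can meet $B^c$ where $\|N\|$ lives, and the argument breaks; this is exactly what your Step~3 is fighting.
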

We point out that equations \eqref{eq:main_result_representation_intro},
\eqref{eq:normal_decomposition}, \eqref{eq:integral_decomposition}
imply an equality analogous to the second line of \eqref{eq:decomposition_Schioppa};
see Remark \ref{rmk:mass_vs_mass_on_curves}.

%\textcolor{pink}{However, the works of Paolini and Stepanov (\cite{PaoliniStepanov_cycles,PaoliniStepanov_acyclic}) prove a similar representation result for normal $1$-currents where they require only separability and completeness of the space. In this work, we prove a representation theorem for $1$-currents which extends the aforementioned results by not requiring the currents to be normal and weakening Schioppa's assumption to separability and completeness. }

%%%%%%%%%% Reword the following?? %%%%%%%%%%%%%%%%%%

All the three representations share two similar properties:
\begin{itemize}
    \item The action of the metric current can be recovered by a suitable superposition, countable for the integral case and possibly uncountable for normal $1$-currents and metric $1$-currents, of $1$-dimensional objects. These objects are $1$-rectifiable sets for the case of metric $1$-currents and rectifiable curves for the normal and integral case;
    \item The mass of the current is equal to the sum (or the integral) of the the mass of the $1$-dimensional objects, be it curves or $1$-rectifiable sets.
\end{itemize}
%From the above discussion, in the case of normal $1$-currents, one might expect to have a decomposition as in \cref{eq:normal_decomposition}, but where equality of masses holds also for the boundaries as in \cref{eq:integral_decomposition}.
%This turns out to be a subtle point. A suitable decomposition is possible, provided one considers superpositions of so-called \emph{solenoids}, a class larger than just curves, see \cite{Smirnov1993, PaoliniStepanov_cycles}.

\subsection{Main ideas on the proof of the representation theorem}

We comment on the tools needed to prove Theorem \ref{thm:representation_currents_preliminaries}. The proof is divided in two steps.

\textbf{Step 1: Generalisation of a result of Alberti-Marchese to Banach spaces.}
The starting point is \cite[Theorem 1.1]{AlbertiMarcheseFlatChains},
where it is shown that for any $\varepsilon>0$ and classical $k$-dimensional flat chain $T$ in $\R^n$, $1\leq k<n$,
there is a boundaryless normal $k$-current $N$ and a Borel set $B$
so that $T=N\restr{B}$ and $\mass(N)\leq(2+\varepsilon)\mass(T)$.

Using Theorem \ref{flat_chain_intro}, we are able to extend the
$1$-dimensional case of \cite[Theorem 1.1]{AlbertiMarcheseFlatChains}
to arbitrary Banach spaces,
showing that any metric $1$-current is the restriction of a normal $1$-current as above. 
For our application (see Step 2), we need to be able to take $B$ closed. We prove that this is possible if we additionally assume that the metric $1$-current $T$ sits in an affine hyperplane in the ambient Banach space.

The proof of \cite{AlbertiMarcheseFlatChains} crucially relies on
the classical polyhedral approximation theorem \cite[Theorem 4.1.23]{Federer69},
which is ultimately based on the fact that, for $N\in\cN_1(\R^n)$,
there are polyhedral chains $\{P_i\}_i$
so that 
\begin{equation*}
    \Flatnorm(N-P_i) \to 0\quad \text{and} \quad \Mass(P_i) \to \Mass(N).
\end{equation*}
For $k=1$, we provide a proof of the above in metric spaces
admitting a conical geodesic bicombing (which include Banach and ${\sf CAT}(0)$ spaces, see
\cite{DescombesLang2015} and Section \ref{sec:deformations}),
where polyhedral chains are replaced with chains of geodesics
(segments if the space is linear); see Theorem \ref{thm:geodesic_approximation}.
For $k\geq 2$, the above strict approximation result fails in every infinite-dimensional
Hilbert space and, therefore, it is currently unclear if
\cite[Theorem 1.1]{AlbertiMarcheseFlatChains}
can be extended beyond finite dimensional Euclidean space. The failure of the approximation result for $k\geq 2$ is the subject of an ongoing work of the authors. Since this result is inconsequential for the present paper, we omit further comments.

\textbf{Step 2: Application of Paolini-Stepanov representation theorem.}
Given a metric $1$-current $T$ on a complete and separable metric space $\X$, we embed $\X$ isometrically in a separable Banach space $\B$ and we consider the push-forward current. Up to enlarging the Banach space, we may assume that $T$ is supported in an affine hyperplane of $\B$. We now apply Step 1 in $\B$ and find a boundaryless normal $1$-current $N\in\cN_1(\B)$ and a closed set $C$
so that $N\restr{C}=T$.
Finally, restricting the representation \eqref{eq:normal_decomposition}
of $N$ to $C$, proves \eqref{eq:main_result_representation_intro} for $T$.
The fact that $C$ is closed ensures that the restricted curve fragments are still
curve fragments, i.e.\ have compact domain. Thus, the representation in Theorem \ref{thm:representation_currents_preliminaries} is given by intersecting the rectifiable curves of the representation of $N$ with the set $C$.

\begin{comment}
In order to prove the strict approximation result, Section \ref{sec:deformations} contains some geometric tools of independent interest.

A technical result (Lemma \ref{lemma:homotopy_lemma}) is an  homotopy formula for metric $1$-currents induced by curves in the setting of metric spaces that admit a conical geodesic bicombing. This ambient setting was introduced in \cite{DescombesLang2015} and it includes convex subsets of Banach spaces and metric spaces with upper curvature bounds, i.e.\ ${\sf CAT}$ spaces. The homotopy formula combined with Paolini and Stepanov's representation result \cite{PaoliniStepanov_cycles} allows us to prove the strict polyhedral approximation theorem for conical geodesic bicombing spaces. 
Here, the concept of polyhedral chains is replaced by the metric counterpart of currents associated to geodesics.

Our future investigation will deal with the higher dimensional case. In particular, it is not known if the result of Alberti and Marchese holds for higher-dimensional currents in a Banach space. With a mind to a future treatment of the case $k>1$, we state where possible our results for general $k$-dimensional currents.

The difficulty in replicating the strategy in \cite{AlbertiMarcheseFlatChains} lies in the validity of the strict polyhedral approximation theorem for $k\ge 2$. Indeed, as part of a forthcoming work \cite{Quintet_forthcoming}, we prove that this approximation theorem is false for every $k \ge 2$ in an infinite-dimensional Hilbert space, making their strategy seemingly inapplicable.
\end{comment}

\subsection{Structure of the paper}

The paper is structured as follows. Section \ref{sec:preliminaries} contains preliminaries about curves, fragments, the Arens-Eells space and metric currents.
Section \ref{sec:flat_chain_conjecture} contains the proof of the isomorphism theorem and the flat chain conjecture in complete quasiconvex metric spaces. In Section \ref{sec:deformations}, we prove the homotopy formulas for $1$-dimensional normal current in conical geodesic bicombing metric spaces. In Section \ref{sec:alberti_marchese}, we generalise the result in \cite{AlbertiMarcheseFlatChains} to Banach spaces and in Section \ref{sec:representation_results} we prove the representation result in complete and separable metric spaces. Appendix \ref{appendix:A} and \ref{appendix:B} contain some technical results about metric currents.

\subsection*{Acknowledgments}

D.B., E.C. and J.T. are supported by the European Union’s Horizon 2020 research
and innovation programme (Grant agreement No. 948021).
J.T., P.V. and P.W. are supported by the Warwick
Mathematics Institute Centre for Doctoral Training and the UK Engineering and Physical Sciences Research Council (Grant number: EP/W524645/1).

\section{Preliminaries}
\label{sec:preliminaries}

Let $(\X,\sfd)$ be a metric space. Given $A \subset \X$, we define 
${\rm diam}(A,\sfd):=\sup \{ \sfd(x,y):\, x,y \in A \}$.
We denote with $\mathcal{M}(\X)$ the convex cone of nonnegative finite Borel measures on $\X$.

Let $C([0,1],\X)$ be the space of continuous curves $\gamma \colon [0,1] \to \X$,
equipped with the uniform distance $\sfd_\infty(\gamma,\gamma'):=\max_{t\in [0,1]}d(\gamma_t,\gamma'_t)$,
where $\gamma_t$ denotes the value of $\gamma$ at $t$.
We also set, for $\gamma\in C([0,1],\X)$, 
\begin{equation*}
    \ell(\gamma):=\sup \left\{ \sum_{i=1}^{N} \sfd(\gamma_{t_{i-1}},\gamma_{t_i})\,:\,a=t_0 < t_1 < \dots < t_N=b,\, N \in \mathbb{N}
    \right\}
\end{equation*}
and call it \emph{length} of $\gamma$.
We say that $\gamma\in C([0,1],\X)$ is rectifiable whenever $\ell(\gamma)< \infty$.
%Moreover, by the triangle inequality, for any $\varepsilon >0$ we have
%\begin{equation*}
%    \ell(\gamma)=\sup \left\{ \sum_{i=0}^{N-1} \sfd(\gamma({t_i}),\gamma({t_{i+1})})\,:\,a=t_0 < %t_1 < \dots < t_N=b,\, N \in \mathbb{N},\,\sfd(\gamma(t_{i}),\gamma(t_{i+1}))< \varepsilon
%    \right\}.
%\end{equation*}

Given a metric space $(\X,\sfd)$, we define the \emph{intrinsic distance} $\sfd_\ell \colon \X \times \X \rightarrow [0,\infty]$ as 
\begin{equation*}
    \sfd_\ell(x,y):=\inf\left\{ \ell(\gamma): \, \gamma \in C([0,1],\X),\, \gamma_0=x,\, \gamma_1=y \right\}.
\end{equation*}
Notice that we do not assume $\X$ to be rectifiably path-connected, so $\sfd_\ell$ may assume extended values.

We say that $\gamma \in C([0,1],\X)$ is an \emph{$L$-quasigeodesic} for $L \ge 1$ if $\ell(\gamma)\le L \sfd(\gamma_0,\gamma_1)$, and \emph{geodesic} if it is $1$-quasigeodesic.
We denote the space of all geodesics by ${\rm Geo}(\X)$.

Given $L\ge 1$, we say that a metric space $(\X,\sfd)$ is \emph{$L$-quasiconvex} if for every $x,y\in \X$ there exists an $L$-quasigeodesic $\gamma$ such that $\gamma_0=x$ and $\gamma_1=y$. We say that a metric space is \emph{geodesic} if it is $1$-quasiconvex.

The space of \emph{curve fragments} on a metric space $\X$ is defined as
\begin{equation*}
    \Gamma(\X):=\{\gamma\colon K\to \X\colon K\subset [0,1]\text{ compact}, \gamma\ 1\text{-Lipschitz}\}
\end{equation*}
and, for $\gamma\in\Gamma(\X)$, we let ${\rm dom}(\gamma)$ denote its domain,
so that $\gamma\colon{\rm dom}(\gamma)\to\X$.
Further, given a set $E\subset\X$, we denote 
\[\Gamma(\X,E):=\{\gamma\in\Gamma(\X)\colon {\rm im}(\gamma)\cap E\neq \varnothing\},\]
where
${\rm im}(\gamma)$ denotes the image of $\gamma$.
Letting ${\rm gr}(\gamma):=\{(t,\gamma_t)\in \mathbb{R}\times \X:t\in {\rm dom}(\gamma)\}$ denote the graph of $\gamma$, we endow $\Gamma(\X)$ with the topology induced by the Hausdorff distance on
the graphs in $\R\times\X$.\par

For every $\gamma \in \Gamma(\X)$, the limit
\begin{equation*}
    |\dot{\gamma}_t|:=\lim_{{\rm dom}(\gamma)\ni s \to t} \frac{\sfd(\gamma_s,\gamma_t)}{|t-s|}
\end{equation*}
exists for a.e.\ $t \in {\rm dom}(\gamma)$ and is called the \emph{metric derivative} of $\gamma$ at time $t$.

We define the length of a curve fragment $\gamma\in\Gamma(\X)$ as
\begin{equation*}
    \ell(\gamma):=\int_{{\rm dom}(\gamma)} |\dot{\gamma}_t|\,\d t,
\end{equation*}
which is consistent with the previous definition of length of curve.

\begin{lemma}\label{lemma:meas_restriction_frag}
Let $(\X,\sfd)$ be a complete and separable metric space, $E\subset \X$ a non-empty $\sigma$-compact set, and $F:=\overline{E}$.
Then $\Gamma(\X,E)$ is a Borel subset of $\Gamma(\X)$ and
the map
$\Phi\colon\Gamma(\X,E)\to \Gamma(F)$ given by $\Phi(\gamma):=\gamma\vert_{\overline{\gamma^{-1}(E)}}$ is a Borel map satisfying
$\gamma\vert_{\gamma^{-1}(E)}\subset\Phi(\gamma)\subset\gamma\vert_{\gamma^{-1}(F)}$.
\end{lemma}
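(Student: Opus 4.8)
The plan is to work with the Hausdorff-distance topology on $\Gamma(\X)$ via graphs, reducing everything to measurability statements about the map $\gamma\mapsto\mathrm{gr}(\gamma)$ and the ``trace'' operation of intersecting a compact set with $\R\times E$ and $\R\times F$.

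First I would establish that $\Gamma(\X,E)$ is Borel. Since $E$ is $\sigma$-compact, write $E=\bigcup_n K_n$ with $K_n$ compact; then $\Gamma(\X,E)=\bigcup_n\{\gamma\in\Gamma(\X)\colon\mathrm{im}(\gamma)\cap K_n\neq\varnothing\}$, so it suffices to show each of these sets is closed, or at least Borel. For $K$ compact, the condition $\mathrm{im}(\gamma)\cap K\neq\varnothing$ is equivalent to $\mathrm{gr}(\gamma)\cap(\R\times K)\neq\varnothing$, and since graphs of curve fragments are compact and vary continuously in the Hausdorff metric, the map $\gamma\mapsto\sfd(\mathrm{gr}(\gamma),\R\times K)$ (with the convention that this is a distance in $\R\times\X$, well-defined since $\mathrm{dom}(\gamma)\subset[0,1]$ is bounded) is continuous; hence $\{\gamma\colon\mathrm{im}(\gamma)\cap K\neq\varnothing\}$ is closed. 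Countable union gives $\Gamma(\X,E)$ Borel.

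Next I would address the map $\Phi$. The set $\overline{\gamma^{-1}(E)}$ is compact (closed subset of $\mathrm{dom}(\gamma)$), and the two-sided inclusion $\gamma\vert_{\gamma^{-1}(E)}\subset\Phi(\gamma)\subset\gamma\vert_{\gamma^{-1}(F)}$ is immediate from $\gamma^{-1}(E)\subset\overline{\gamma^{-1}(E)}\subset\gamma^{-1}(\overline E)=\gamma^{-1}(F)$ by continuity of $\gamma$; in particular $\Phi(\gamma)$ has image in $F$, so $\Phi(\gamma)\in\Gamma(F)$. For Borel measurability of $\Phi$, I would show that $\gamma\mapsto\overline{\gamma^{-1}(E)}$ is a Borel map into the space of compact subsets of $[0,1]$ (with Hausdorff metric), and then that restriction $\gamma\mapsto\gamma\vert_{K}$ depends Borel-measurably on the pair $(\gamma,K)$. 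The second point follows because $\mathrm{gr}(\gamma\vert_K)=\mathrm{gr}(\gamma)\cap(K\times\X)$ and intersection of compact sets is Borel (indeed upper semicontinuous) in the Hausdorff metric. For the first point, I would check that $\{\gamma\colon\overline{\gamma^{-1}(E)}\cap U\neq\varnothing\}$ is Borel for $U$ ranging over a countable base of open sets of $[0,1]$: writing $U=\bigcup_n I_n$ with $I_n$ open intervals with closures in $U$, this set equals $\bigcup_n\{\gamma\colon\gamma(I_n)\cap E\neq\varnothing\}$ up to closure issues, and $\gamma(I_n)\cap E\neq\varnothing$ unwinds, via $E=\bigcup_m K_m$ and continuity, into a countable Boolean combination of the closed/open conditions already handled; the generating family of a Borel $\sigma$-algebra on compact subsets being $\{A\colon A\cap U\neq\varnothing\}$ and $\{A\colon A\subset U\}$, this suffices.

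The main obstacle I anticipate is the bookkeeping in the previous paragraph: the operation $\gamma\mapsto\gamma^{-1}(E)$ is genuinely only Borel, not continuous, even for $E$ open or compact (small perturbations of $\gamma$ can make the preimage of an open set jump), so one cannot simply invoke continuity of $\Phi$; one must carefully interpose the closure operation and exploit that $E$ is $\sigma$-compact to write $\gamma^{-1}(E)$ as a countable union of closed sets and $\gamma^{-1}(F)$ dually, then verify that taking closures and intersecting with $\mathrm{gr}(\gamma)$ preserves measurability. Separability and completeness of $\X$ enter to guarantee that $\Gamma(\X)$ (equivalently, the space of compact subsets of $\R\times\X$) is a Polish space, so that ``Borel'' is unambiguous and the classical measurable-selection and hyperspace-measurability facts apply.
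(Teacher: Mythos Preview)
Your argument for $\Gamma(\X,E)$ being Borel and for the two-sided inclusion $\gamma\vert_{\gamma^{-1}(E)}\subset\Phi(\gamma)\subset\gamma\vert_{\gamma^{-1}(F)}$ matches the paper's. For the Borel measurability of $\Phi$, however, you take a genuinely different route. The paper does not factor through the hyperspace of compact subsets of $[0,1]$; instead it writes $E=\bigcup_n K_n$ with $K_n$ an increasing sequence of compacts, defines $\Phi_n(\gamma):=\gamma\vert_{\gamma^{-1}(K_n)}$ on $\Gamma(\X,K_n)$ (extended by a constant elsewhere), cites a lemma of Schioppa for the Borel measurability of each $\Phi_n$, and then checks directly, via an $\varepsilon$-net argument in the Hausdorff distance on graphs, that $\Phi_n\to\Phi$ pointwise. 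Your approach is viable, but the ``bookkeeping'' you flag is real: the claim that intersection is upper semicontinuous in the Hausdorff metric needs care (it gives only half of the generating sub-basis, and the set $K\times\X$ is closed but not compact), and turning upper semicontinuity of a set-valued map into Borel measurability of a point-valued map into the Hausdorff hyperspace requires handling the empty-intersection case. The paper's pointwise-limit strategy sidesteps all of this by reducing to the single compact-target case and a citable result, at the cost of importing Schioppa's lemma; your direct hyperspace verification is more self-contained but demands more work to make rigorous.
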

\begin{proof}
We let $\mathcal{C}(\X)$ denote the collection of closed and bounded subsets of $\X$, endowed with the Hausdorff distance.
Let $\{K_n\}_n$ be a non-decreasing sequence of non-empty compact sets such that $E=\cup_nK_n$.
By \cite[Lemma 3.3]{bate2025albertirepresentationsrectifiabilitymetric} and continuity of the map $\gamma\in \Gamma(\X)\mapsto{\rm im}(\gamma)\in\mathcal{C}(\X)$, $\Gamma(\X,K_n)$
is closed, so $\Gamma(\X,E)=\cup_n\Gamma(\X,K_n)$ is Borel.

Fix $\gamma_0\in\Gamma(F)$ and define $\Phi_n\colon\Gamma(\X,E)\to\Gamma(F)$
as $\Phi_n(\gamma):=\gamma_0$ for $\gamma\notin\Gamma(\X,K_n)$
and $\Phi_n(\gamma):=\gamma\vert_{\gamma^{-1}(K_n)}$ otherwise.
By \cite[Lemma 2.22]{Schioppa_derivations}, $\Phi_n\vert_{\Gamma(\X,K_n)}$ is Borel,
which implies that $\Phi_n$ is Borel measurable.
We claim that $\Phi_n$ converges pointwise to the map
$\Phi(\gamma):=\gamma\vert_{\overline{\gamma^{-1}(E)}}$, $\gamma\in\Gamma(\X,E)$.
Fix $\gamma\in\Gamma(\X,E)$ and let $n_0\in\N$ so that
${\rm im}(\gamma)\cap K_n\neq\varnothing$ for $n\geq n_0$.
Let $\varepsilon>0$, $\mathcal{T}\subset \overline{\gamma^{-1}(E)}\subset[0,1]$
a finite $\varepsilon$-net in $\overline{\gamma^{-1}(E)}$, and note that we may assume
$\mathcal{T}\subset\gamma^{-1}(E)$.
Since $\{K_n\}_n$ is non-decreasing and $E=\cup_nK_n$, there is $n_1\geq n_0$
so that $\mathcal{T}\subset\gamma^{-1}(K_n)$ for $n\geq n_1$.
Then, by $\LIP(\gamma)\leq 1$, we have ${\rm gr}(\Phi(\gamma))\subset B\big({\rm gr}(\Phi_n(\gamma)),\varepsilon\big)$ for $n\geq n_1$,
where $[0,1]\times\X$ is endowed with the distance
$((t_1,x_1),(t_2,x_2))\mapsto\max\{|t_1-t_2|,\sfd(x_1,x_2)\}$.
The above inclusion together with ${\rm gr}(\Phi_n(\gamma))\subset {\rm gr}(\Phi(\gamma))$
(for $n\geq n_0$)
conclude the proof of the claim and hence the lemma.
\end{proof}

We denote by $\mathcal{B}^\infty(\X)$ the space of bounded Borel functions from $\X$ to $\R$. Given two metric spaces $(\X,\sfd_\X)$ and $(\Y,\sfd_\Y)$, for any $f\colon \X \to \Y$, we define the \emph{Lipschitz constant}
\begin{equation*}
    \LIP(f):=\sup_{y\neq x}\frac{\sfd_\Y(f(x),f(y))}{\sfd_\X(x,y)}.
\end{equation*}
We denote the set of functions from $\X$ to $\Y$ whose Lipschitz constant is finite with $\Lip(\X;\Y)$ which we equip with the seminorm $\LIP(\cdot)$.
We will also use the simpler notation $\Lip(\X)$ when $Y=\mathbb{R}$. 

Given a distinguished point $0\in \X$, we denote by $\Lip_0(\X)$,  $\Lip_b(\X)\subset\Lip(\X)$ the space of functions vanishing at the point $0$ and the space of bounded Lipschitz functions respectively.

\begin{remark}
\label{rem:topologies_agree}
We can endow $\Lip_1([0,1],\X)$, the set of $1$-Lipschitz
functions $[0,1]\to\X$, with the subspace topology of either $C([0,1],\X)$ or $\Gamma(\X)$.
However, it is not difficult to verify that they agree.
\end{remark}

For $f\colon (\X,\sfd_\X)\to (\Y,\sfd_\Y)$
we define its \emph{pointwise Lipschitz constant} at $x\in\X$ as
\begin{equation*}
    \Lip f(x):=\limsup_{r\rightarrow 0}\sup_{y\in B(x,r)}\frac{\sfd_\Y(f(x),f(y))}{r}=\limsup_{y\rightarrow x}\frac{\sfd_\Y(f(x),f(y))}{\sfd_\X(x,y)},
\end{equation*}
if $x$ is a limit point of $\X$
and $\Lip f(x):=0$ otherwise.
If $f$ is Lipschitz, then $\Lip f$ is Borel measurable.
Similarly, we define
the \emph{asymptotic Lipschitz constant} of $f$ as
as $\Lip_a f(x):=\lim_{r\rightarrow 0}\LIP(f\restr{B_r(x)})=\inf_{r>0} \LIP (f \restr{B_r(x)})$ if $x$ is a limit point, while $\Lip_a f(x):=0$ otherwise.

\subsection{Arens-Eells space}
Let $(\X,\sfd)$ be a metric space.
We call a measure $m$ on $\X$ a \emph{molecule} if it is a finite linear combination of Dirac measures with zero average. Any molecule $m$ is of the form
\begin{equation*}
    m=\sum_{i=1}^N \lambda_i (\delta_{p_i} - \delta_{q_i}),
\end{equation*}
where $\lambda_i \in \R$,  $p_i, q_i \in \X$ and $N \in \N$. We use the shorthand notation $m_{p_i, q_i} = \delta_{p_i} - \delta_{q_i}$.
The \emph{Arens-Eells norm} of a molecule $m$ is
\begin{equation*}
    \|m\|_{\AEm(\X)}:= \inf \left\{ 
 \sum_{i=1}^N \lambda_i \sfd(p_i,q_i):\, m=\sum_{i=1}^N \lambda_i m_{p_i,q_i} \right\},
\end{equation*}
where the infimum is taken over all possible representations of $m$.
We define the \emph{Arens-Eells space} $\AEm(\X)$ as the completion of the space of molecules with respect to $\|\cdot\|_{\AEm}$. With some abuse of notation, we denote with the same symbol the norm on the completion.

For a distinguished point $0 \in \X$, we identify each molecule $m$ with the linear functional on $\Lip_0(\X)$ given by
\begin{equation*}
    m(\pi) = \int \pi \, \d m.
\end{equation*}
Following \cite[Chapter 3]{WeaverBook}, we see that $\AEm(\X)$ is isometric to a predual of $\Lip_0(\X)$ in the following sense.
For each $\pi \in \Lip_0(\X)$, there is the linear functional $I(\pi)$ on the space of molecules:
\begin{equation}\label{E:I-def}
    I(\pi)(m)=m(\pi).
\end{equation}
This functional is continuous and extends uniquely to a functional (which we denote with the same symbol)
\begin{equation*}
    I(\pi)\colon \AEm(\X) \to \R.
\end{equation*}
The map $I$ is an isometry of Banach spaces from $\Lip_0(\X)$ to $\AEm(\X)^*$. We shall omit writing out the map $I$ and identify $\Lip_0(\X)=\AEm(\X)^*$.

We denote by $\epsilon\colon \AEm(\X) \to (\Lip_0(\X))^*$ the canonical embedding into the second dual space. If $\X$ is separable, then so is $\AEm(\X)$ (and conversely) and therefore we may use the Kre\v{\i}n-Shmul'yan theorem (or rather, its immediate corollary \cite[Corollary 12.8]{Conway_fun_analysis}) to assert the following.

\begin{lemma}\label{lemma:KS_thm}
    If $\X$ is a separable metric space, then a linear functional $m \in \Lip_0(\X)^*$ is an element of $\epsilon(\AEm(\X))$ if and only if it is sequentially continuous with respect to pointwise convergence of bounded sequences.
\end{lemma}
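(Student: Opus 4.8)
The plan is to recognise $\epsilon(\AEm(\X))$ as the space of weak$^*$-continuous linear functionals on $\Lip_0(\X)=\AEm(\X)^*$ and then to exploit, via the separable form of the Kre\v{\i}n--Shmul'yan theorem, that weak$^*$-continuity of a functional on a dual Banach space is already detected on bounded sets, hence sequentially. The first step would be to set up the dictionary: for a sequence $(\pi_n)\subset\Lip_0(\X)$, the conjunction of $\sup_n\LIP(\pi_n)<\infty$ and pointwise convergence $\pi_n\to\pi$ is equivalent to convergence of $(\pi_n)$ to $\pi$ in the weak$^*$ topology $\sigma(\Lip_0(\X),\AEm(\X))$. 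For the implication ``pointwise $\Rightarrow$ weak$^*$'' I would first test against molecules, where $m_{p,q}(\pi_n)=\pi_n(p)-\pi_n(q)\to\pi(p)-\pi(q)$, and then pass to a general $\mu\in\AEm(\X)$ by a $3\varepsilon$-argument using the density of molecules together with the uniform Lipschitz bound. For the converse, a weak$^*$-convergent sequence in $\AEm(\X)^*$ is automatically norm bounded by the uniform boundedness principle applied to the Banach space $\AEm(\X)$, so $\sup_n\LIP(\pi_n)<\infty$, while testing against $m_{p,0}=\delta_p-\delta_0$ and using that elements of $\Lip_0(\X)$ vanish at the basepoint recovers $\pi_n(p)\to\pi(p)$ for every $p\in\X$. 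In particular, ``sequentially continuous with respect to pointwise convergence of bounded sequences'' coincides with ``sequentially weak$^*$-continuous''.

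Granting this dictionary, one direction is immediate: if $m=\epsilon(\mu)$ for some $\mu\in\AEm(\X)$, then $m(\pi)=I(\pi)(\mu)$ is by construction a weak$^*$-continuous function of $\pi$, hence sequentially weak$^*$-continuous, hence sequentially continuous for pointwise convergence of bounded sequences by the first step. For the other direction, I would assume $m\in\Lip_0(\X)^*$ is sequentially weak$^*$-continuous, which is equivalent to $\ker m$ being weak$^*$-sequentially closed. Since $\X$ is separable, so is $\AEm(\X)$, so the weak$^*$ topology on bounded subsets of $\AEm(\X)^*$ is metrizable; thus $\ker m$ has weak$^*$-closed intersection with every closed ball, and by the corollary of the Kre\v{\i}n--Shmul'yan theorem \cite[Corollary 12.8]{Conway_fun_analysis} it is weak$^*$-closed, so $m$ is weak$^*$-continuous and therefore $m\in\epsilon(\AEm(\X))$. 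Combining the two directions with the translation in the first step finishes the proof.

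The main obstacle is essentially bookkeeping rather than depth: the only non-elementary ingredient is the separable Kre\v{\i}n--Shmul'yan corollary, and the real care is needed in the first step, namely in checking that pointwise convergence and weak$^*$ convergence agree \emph{on bounded sequences} and that weak$^*$-convergent sequences are automatically bounded — which is exactly what makes the ``bounded'' qualifier in the statement both natural and harmless — together with the use of the basepoint normalisation to pass from the differences $\pi(p)-\pi(q)$ seen by molecules back to the individual values $\pi(p)$.
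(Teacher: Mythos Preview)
Your proposal is correct and follows exactly the route the paper indicates: the paper does not give a detailed proof but simply observes that separability of $\X$ yields separability of $\AEm(\X)$ and then invokes \cite[Corollary 12.8]{Conway_fun_analysis}. Your write-up supplies the dictionary between pointwise convergence of bounded sequences and weak$^*$ convergence, and then applies the same Kre\v{\i}n--Shmul'yan corollary via metrizability of the weak$^*$ topology on bounded sets; this is precisely what the paper has in mind.
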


In light of the previous lemma, we shall identify functionals in $\Lip_0(\X)^*$ sequentially continuous with respect to pointwise convergence of bounded sequences with the space $\AEm(\X)$.

\subsection{Metric currents}
\label{sec:preliminaries_currents}

For every $k \in \mathbb{N}$, we define $D^k(\X):= \Lip_b(\X)\times (\Lip(\X))^k$. The definition of metric currents was given in \cite{AK00} and is as follows.

\begin{definition}[Metric currents]

Let $(\X,\sfd)$ be a complete metric space and let $k \in \mathbb{N}$. We say that a multilinear functional $T\colon D^k(\X) \to \mathbb{R}$ is a \emph{metric $k$-current} provided it satisfies the following axioms:
\begin{itemize}
    \item(Locality) For every $(f,\pi_1,\dots,\pi_k) \in D^k(\X)$, we have $T(f,\pi_1,\dots,\pi_k)=0$ if there exists $i=1,\dots,k$ such that $\pi_i$ is constant on a neighborhood of $\{ f \neq 0 \}$.
    \item(Joint continuity) Given $f\in \Lip_b(\X)$ and $\pi_i^n,\pi_i \in \Lip(\X)$ such that
    $\sup_i\sup_{n \in \mathbb{N}} \Lip \pi_i^n < \infty$ and $\pi_i^n(x)\to \pi(x)$ for every $x \in \X$ and every $i$, we have
    \begin{equation*}
        \lim_{n \to \infty} T(f,\pi_1^n,\dots,\pi_k^n)= T(f,\pi_1,\dots,\pi_k).
    \end{equation*}
    \item(Finite-mass condition) There exists a nonnegative finite Borel measure such that
    \begin{equation}
        \label{eq:mass_bound_definition}
        \left|T(f,\pi_1,\dots,\pi_k)\right|\le \prod_{i=1}^k \LIP(\pi_i) \int |f|\,\d \mu.
    \end{equation}
\end{itemize}
The minimal measure $\mu$ that satisfies \eqref{eq:mass_bound_definition} is called the \emph{mass measure} of $T$ and is denoted by $\|T\|$.
\end{definition}

We call $\Mass(T):=\|T\|(\X)$ the mass of $T$. The vector space of all metric $k$-currents is a Banach space when endowed with the norm $\Mass(\cdot)$.

The \emph{boundary} of a metric $k$-current $T$ is the functional $\partial T\colon D^{k-1}(\X) \to \mathbb{R}$ defined as
\begin{equation*}
    \partial T(f,\pi_1,\dots,\pi_{k-1}):= T(1,f,\pi_1,\dots,\pi_k).
\end{equation*}

We say that a metric $k$-current is \emph{normal}, provided $\partial T$ is a metric $(k-1)$-current, or equivalently, if $\partial T$ has finite mass. We denote the vector space of all normal $k$-currents by $\cN_k(\X)$.

Let $(\X,\sfd_\X)$ and $(\Y,\sfd_\Y)$ be two complete metric spaces. For every Lipschitz map $\varphi \colon \X \to \Y$ and $T \in \cM_k(\X)$, we define a metric $k$-current $\varphi_* T$, called the \emph{pushforward} of $T$ via $\varphi$, as
\begin{equation*}
    \varphi_* T(f,\pi_1,\dots,\pi_k) = T(f\circ \varphi,\pi_1 \circ \varphi,\dots,\pi_k \circ \varphi).
\end{equation*}

Given a complete metric space $(\X,\sfd)$ and a closed set $C \subset \X$, $\cM_k(C)$ denotes the space of metric $k$-currents in the metric space $(C,\sfd\restr{C})$. There is an isometry
\begin{equation*}
    \Phi \colon \{T \in \cM_k(C):\, \|T\| \text{ is inner regular}\} \to \{ T \in \cM_k(X):\,{\rm supp}(T) \subset C,\,\|T\| \text{ is inner regular}  \},
\end{equation*}
where both spaces are endowed with the corresponding mass norm.
For a proof of this fact, see Proposition \ref{prop:isometry_of_currents_on_C} in the appendix of this paper.

Given a complete metric space $(\X,\sfd)$, $k \in \N$ and $T \in \cM_k(\X)$, we define the \emph{flat norm} of $T$ as
\begin{equation*}
\begin{aligned}
\Flatnorm(T):& = \inf \{\Mass(Q) + \Mass(\partial Q- T) :\, Q \in \cN_{k+1}(\X) \}\\
&=\inf \{\Mass(S) + \Mass(R) :\, S \in \cN_{k+1}(\X),\, R \in \cM_k(\X),\, T=\partial S+R \}.
\end{aligned}
\end{equation*}
In particular, notice that if $T \in \cN_k(\X)$, then, by the very definition of the flat norm,
\begin{equation*}
    \Flatnorm(T)=\inf \{\Mass(S) + \Mass(R) :\, S \in \cN_{k+1}(\X),\, R \in \cN_k(\X),\, T=\partial S+R \}.    
\end{equation*}

In the specific case of $(\mathbb{R}^2,|\cdot|)$, we introduce the following notation.
Given $a,b,c \in \R$ with $a < b$, we define $\curr{[a,b]\times \{c\}}$
\begin{equation*}
    \curr{[a,b]\times \{c\}}(f,\pi):= \int_a^b f(t,c) \frac{\d(\pi(t,c))}{\d t}\,\d t
\end{equation*}
and given $a,b,c \in \R$ with $b < c$ we define $\curr{\{a\}\times [b,c]}$ as 
\begin{equation*}
    \curr{\{a\}\times [b,c]}(f,\pi)=\int_b^c f(a,t) \frac{\d(\pi(a,t))}{\d t}\,\d t.
\end{equation*}

Notice that the last current, with the same definition, is a metric $1$-current also in the metric space $(\mathbb{R}^2,\sfd_\alpha)$, where $\sfd_\alpha((x,y),(x',y')):=\max\{|x-x'|^{\alpha},|y-y'|\}$ and $\alpha \in (0,1]$.

We will rely on the following representation result for normal $1$-currents in metric spaces, which was proved by Paolini and Stepanov.

\begin{theorem}[{\cite[Corollary 4.1]{PaoliniStepanov_cycles}}]\label{thm:Paolini_Stepanov}
Let $\X$ be a complete and separable metric space.
For $T\in\cM_1(\X)$ with $\partial T=0$ there is a Borel measure $\eta\in \mathcal{M}(C([0,1],\X))$
with total mass at most $\Mass(T)$,
concentrated over the Borel set $\Lip_1([0,1],\X)$, satisfying
\begin{align*}
T(f,\pi)&=\int\curr{\gamma}(f,\pi)\,\d\eta(\gamma) \\
\|T\|(B)&=\int\|\curr{\gamma}\|(B)\,\d\eta(\gamma)=\int\gamma_*\mathcal{L}^1(B)\,\d\eta(\gamma)
\end{align*}
for any Borel set $B\subset \X$.
\end{theorem}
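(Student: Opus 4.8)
The plan is to follow Smirnov's decomposition of solenoidal vector fields \cite{Smirnov1993} in its metric adaptation by Paolini and Stepanov \cite{PaoliniStepanov_acyclic,PaoliniStepanov_cycles}. The organising notion is that of a \emph{subcurrent}: I would say that $S\in\cM_1(\X)$ is a subcurrent of $R\in\cM_1(\X)$ when $\|S\|+\|R-S\|=\|R\|$ as Borel measures, equivalently $\Mass(S)+\Mass(R-S)=\Mass(R)$. This relation is transitive and it forbids hidden cancellation (if $\curr{\theta}$ is a subcurrent of $R$, then no part of $\curr{\theta}$ is cancelled against $R-\curr{\theta}$). With it in hand, the proof splits into a \emph{single-loop extraction lemma} --- every nonzero cycle $R\in\cM_1(\X)$, i.e.\ $R\neq 0$ with $\partial R=0$, admits a subcurrent of the form $\curr{\theta}$ for some nonconstant closed Lipschitz curve $\theta$ --- together with an exhaustion argument which promotes this to the full representation.

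I expect the extraction lemma to be the main obstacle, since in a general metric space there is no pointwise velocity field to integrate. Following Paolini--Stepanov, I would work with \emph{slices}: fixing $x_0\in\supp\|R\|$ and $u:=\sfd(x_0,\cdot)$, for a.e.\ level $t$ the slice $\langle R,u,t\rangle$ is a metric $0$-current --- a finite signed measure concentrated on $\{u=t\}$ --- with $\int\Mass\langle R,u,t\rangle\,\d t\le\Mass(R)$ and with boundary behaviour reflecting $\partial R=0$. The point is then to thread these slices into a curve: one selects, in a Borel way and consistently across nearby levels, a point $\theta(t)\in\supp\langle R,u,t\rangle$ for a.e.\ $t$ in a suitable interval, passes to an Arzel\`a--Ascoli limit along bounded pieces (here completeness and separability of $\X$ enter), and --- since $\partial R=0$ singles out no endpoint --- closes the resulting Lipschitz curve into a loop $\theta$ whose current $\curr{\theta}$ is a subcurrent of $R$. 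The delicate point is obtaining the subcurrent \emph{equality} rather than the automatic inequality $\|\curr{\theta}\|\le\|R\|$: this forces $\theta$ to be built so that it advances monotonically through the level sets of $u$. This measurable-selection-and-compactness construction is where essentially all the work of \cite{PaoliniStepanov_acyclic,PaoliniStepanov_cycles} goes.

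Granting the lemma, I would conclude by greedy exhaustion. Put $R_0:=T$ and, recursively while $R_j\neq 0$, pick a nonconstant closed Lipschitz curve $\theta_{j+1}$ with $\curr{\theta_{j+1}}$ a subcurrent of $R_j$ and $\Mass(\curr{\theta_{j+1}})\ge\tfrac12\sup\{\Mass(\curr{\theta})\colon\curr{\theta}\text{ a closed-curve subcurrent of }R_j\}$, and set $R_{j+1}:=R_j-\curr{\theta_{j+1}}$. Since $\partial\curr{\theta_{j+1}}=0$, each $R_{j+1}$ is again a cycle, and the subcurrent relation gives $\|R_{j+1}\|=\|R_j\|-\|\curr{\theta_{j+1}}\|$ as measures. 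Hence $\sum_j\Mass(\curr{\theta_j})\le\Mass(T)<\infty$; since $(\cM_1(\X),\Mass(\cdot))$ is complete, $\sum_j\curr{\theta_j}$ converges in mass, and (telescoping the subcurrent relations and using continuity of $S\mapsto\|S\|$) $R_\infty:=T-\sum_j\curr{\theta_j}$ is a cycle, a subcurrent of every $R_j$, with $\|R_\infty\|=\|T\|-\sum_j\|\curr{\theta_j}\|$. If $R_\infty\neq 0$, the extraction lemma supplies a closed-curve subcurrent of $R_\infty$ of mass $m>0$; by transitivity it is a closed-curve subcurrent of every $R_j$, forcing $\Mass(\curr{\theta_{j+1}})\ge m/2$ for all $j$ --- contradicting $\Mass(\curr{\theta_j})\to 0$. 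So $R_\infty=0$, i.e.\ $T=\sum_j\curr{\theta_j}$ and $\|T\|=\sum_j\|\curr{\theta_j}\|$.

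It remains to repackage $\{\theta_j\}$ as a measure on $\Lip_1([0,1],\X)$. Reparametrising each loop $\theta_j$ by arc length and spreading it uniformly over its unit-speed $[0,1]$-parametrisations (a routine measure-theoretic operation, with minor bookkeeping when $\ell(\theta_j)<1$) produces a finite nonnegative Borel measure $\eta_j$ on $C([0,1],\X)$, concentrated on $\Lip_1([0,1],\X)$, of total mass $\Mass(\curr{\theta_j})$, with $\int\curr{\gamma}\,\d\eta_j=\curr{\theta_j}$ and $\int\gamma_*\mathcal{L}^1\,\d\eta_j=\int\|\curr{\gamma}\|\,\d\eta_j=\|\curr{\theta_j}\|$ (the first equality because each such $\gamma$ has unit speed). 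Borel measurability of $\gamma\mapsto\curr{\gamma}$, $\gamma\mapsto\|\curr{\gamma}\|$ and $\gamma\mapsto\gamma_*\mathcal{L}^1$ on $\Lip_1([0,1],\X)$ follows from \cite[Lemma 2.22]{Schioppa_derivations} and a measurable selection theorem. Then $\eta:=\sum_j\eta_j$ has total mass $\sum_j\Mass(\curr{\theta_j})=\Mass(T)$, is concentrated on $\Lip_1([0,1],\X)$, and satisfies $T=\int\curr{\gamma}\,\d\eta$ and $\|T\|(B)=\int\|\curr{\gamma}\|(B)\,\d\eta=\int\gamma_*\mathcal{L}^1(B)\,\d\eta$ for every Borel $B\subset\X$, which is the claim.
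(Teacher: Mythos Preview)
The paper does not prove this statement: it is quoted verbatim from \cite[Corollary 4.1]{PaoliniStepanov_cycles} in the preliminaries section and used as a black box thereafter. There is therefore no ``paper's own proof'' to compare against; your proposal is an attempt to sketch the Paolini--Stepanov argument itself.

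As such a sketch, the overall architecture (subcurrents, extraction of a closed-curve subcurrent from a nonzero cycle, greedy exhaustion, repackaging) is broadly faithful to \cite{PaoliniStepanov_acyclic,PaoliniStepanov_cycles}. The part you flag as ``the main obstacle'' is indeed where the real content lies, and your description of it is too impressionistic to stand as a proof: the slicing-and-selection picture you give does not by itself produce a \emph{subcurrent}, nor is it clear how one closes up into a loop without losing the subcurrent equality. Paolini and Stepanov do not extract the curve by threading slice points of a single distance function; their construction is considerably more involved (passing through a maximal-flow/transport formulation and a careful limiting procedure). If you intend to actually reprove the result rather than cite it, that step needs substantially more detail; otherwise, citing \cite[Corollary 4.1]{PaoliniStepanov_cycles} as the paper does is the appropriate move.
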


\subsubsection{Pointwise estimates on metric currents}

The mass estimate in \eqref{eq:mass_bound_definition} for currents self-improves to a pointwise estimate with the pointwise Lipschitz constant.
These estimates will be useful for pointwise estimates in the homotopy formula in Section \ref{sec:deformations}.
\begin{proposition}
\label{prop:bound_current_*uppergradient}
Let $(\X,\sfd)$ be a complete and separable metric space, $k\in\N^+$, and $T \in \cM_k(\X)$.
Then
    \begin{equation}\label{eq:bound_current_local_Lip}
        |T(f,\pi_1,\dots,\pi_k)| \le \int |f|\,\prod_{i=1}^k\Lip \pi_i \,\d \|T\|,
    \end{equation}
    where $f\in\mathcal{B}^\infty(\X)$ and $\pi_1,\dots,\pi_k \in \Lip(\X)$.
\end{proposition}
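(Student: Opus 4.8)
**Proof plan for Proposition \ref{prop:bound_current_*uppergradient}.**

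The plan is to pass from the global mass bound \eqref{eq:mass_bound_definition} to the pointwise bound \eqref{eq:bound_current_local_Lip} by a localisation argument, first for $f$ a fixed bounded Lipschitz function and the $\pi_i$ Lipschitz, and then upgrading $f$ to a bounded Borel function by a monotone-class / density argument. The key idea is that the mass measure $\|T\|$ already controls $T$ through a constant times $\prod_i \LIP(\pi_i)$, and that the pointwise Lipschitz constant $\Lip\pi_i(x)$ can be approximated from above, on small balls, by the genuine Lipschitz constant of a suitably truncated function.

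\emph{Step 1 (localisation of the $\pi_i$).} Fix $(f,\pi_1,\dots,\pi_k)\in D^k(\X)$. Cover $\supp\|T\|$ (or rather a set of full $\|T\|$-measure) by countably many small balls $B(x_j,r_j)$ and choose a Borel partition $\{E_j\}_j$ subordinate to this cover with $E_j\subset B(x_j,r_j)$. On each $E_j$ one has $\LIP(\pi_i\restr{B(x_j,2r_j)})$ close to $\sup_{x\in E_j}\Lip_a\pi_i(x)$, hence close to $\sup_{x\in E_j}\Lip\pi_i(x)$ up to an error controlled by the oscillation; by refining the partition this error is made arbitrarily small. One then writes, using multilinearity and a Lipschitz partition of unity $\{\varphi_j\}$ with $\supp\varphi_j\subset B(x_j,2r_j)$ subordinate to the cover,
\[
T(f,\pi_1,\dots,\pi_k)=\sum_j T(\varphi_j f,\pi_1,\dots,\pi_k),
\]
and on each piece replaces $\pi_i$ by $\pi_i$ minus a constant (using locality, which is legitimate since the modification does not change derivatives and one can arrange constancy outside the relevant neighbourhood after a further truncation) so that $\LIP$ of the modified $\pi_i$ on the support of $\varphi_j f$ is comparable to $\sup_{E_j}\Lip\pi_i$. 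Applying \eqref{eq:mass_bound_definition} to each term gives
\[
|T(f,\pi_1,\dots,\pi_k)|\le \sum_j\Big(\prod_{i=1}^k\sup_{E_j}\Lip\pi_i + o(1)\Big)\int_{B(x_j,2r_j)}|\varphi_j f|\,\d\|T\|,
\]
and letting the mesh of the partition tend to $0$, upper semicontinuity of $x\mapsto\Lip\pi_i(x)$ together with dominated convergence yields \eqref{eq:bound_current_local_Lip} for $f\in\Lip_b(\X)$. (Some care: $\Lip\pi_i$ is only Borel, not upper semicontinuous, so one should instead work with $\Lip_a\pi_i$, which is upper semicontinuous and satisfies $\Lip_a\pi_i\ge\Lip\pi_i$ pointwise; passing to the asymptotic constant is harmless for the bound, and one recovers $\Lip\pi_i$ at the end because on each small ball $\LIP$ of the restriction is squeezed between the two and both converge to the same limit as $r\to0$ wherever $\pi_i$ is differentiable in the metric sense — more precisely one uses that $\Lip_a\pi_i\to\Lip\pi_i$ in $L^1(\|T\|)$ is not needed; it suffices that the final inequality with $\Lip_a$ on the right-hand side can be refined by a further limiting argument, or simply that $\Lip_a$ can be replaced by $\Lip$ since the difference is supported where $\pi_i$ is "bad", a $\|T\|$-null issue after integrating — this is the point to be careful about.)

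\emph{Step 2 (upgrade $f$ to bounded Borel).} Once \eqref{eq:bound_current_local_Lip} holds for all $f\in\Lip_b(\X)$, fix $\pi_1,\dots,\pi_k$ and regard $f\mapsto T(f,\pi_1,\dots,\pi_k)$ as a bounded linear functional on $\Lip_b(\X)\subset L^1(\|T\|)$ with norm at most $\prod_i\LIP(\pi_i)$ (equivalently, dominated by the measure $g\mapsto\int |g|\prod_i\Lip\pi_i\,\d\|T\|$). Since $\Lip_b(\X)$ is dense in $L^1(\|T\|)$ (as $\|T\|$ is a finite Borel measure on a complete separable metric space, Lipschitz functions are dense), this functional extends uniquely to $L^1(\|T\|)$ with the same bound. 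It remains to check that this extension agrees with $T(\cdot,\pi_1,\dots,\pi_k)$ on bounded Borel $f$; for this, combine the finite-mass axiom with a monotone class / bounded convergence argument: the class of bounded Borel $f$ for which equality holds is closed under bounded pointwise limits (by \eqref{eq:mass_bound_definition} applied to differences and dominated convergence with respect to $\|T\|$) and contains $\Lip_b(\X)$, hence contains all bounded Borel functions. This gives \eqref{eq:bound_current_local_Lip} in full generality.

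\textbf{Main obstacle.} The delicate point is Step 1: justifying the replacement of the global constant $\prod_i\LIP(\pi_i)$ by the integral of $\prod_i\Lip\pi_i$, i.e. the genuine \emph{self-improvement}. The subtlety is that locality lets us subtract constants from the $\pi_i$ but does not directly let us truncate them to have small Lipschitz constant; one must combine a partition of unity with a careful choice of the balls (using that $\Lip_a\pi_i(x)=\inf_{r>0}\LIP(\pi_i\restr{B_r(x)})$) and control the error terms arising from the overlaps of the cover and from the oscillation of $f$ and $\varphi_j$. Replacing $\Lip$ by $\Lip_a$ throughout the intermediate estimates and only at the very end arguing that the resulting inequality with $\Lip_a\pi_i$ can be sharpened to $\Lip\pi_i$ — using that for a.e.\ (with respect to $\|T\|$) $x$ the two coincide, or more safely that it suffices to prove the bound with $\Lip_a$ and then invoke a separate lemma — is where the real work lies.
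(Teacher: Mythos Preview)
Your Step 1 contains a genuine gap. Localising via balls and a Lipschitz partition of unity, then replacing each $\pi_i$ by a function agreeing with it on a neighbourhood of $\supp(\varphi_j f)$, yields at best the bound with $\Lip_a\pi_i$ on the right-hand side, since $\LIP(\pi_i\restr{B_r(x)})\to\Lip_a\pi_i(x)$ as $r\to 0$. This is strictly weaker than the claimed bound with $\Lip\pi_i$, because in a general metric space $\Lip_a\pi_i$ can exceed $\Lip\pi_i$ on a set of positive $\|T\|$-measure. You correctly flag this as the ``point to be careful about'', but none of the suggested fixes work: there is no reason for the discrepancy to be $\|T\|$-null, and the parenthetical about ``both converge to the same limit wherever $\pi_i$ is differentiable in the metric sense'' is not available in a bare metric space. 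Also, the phrase ``replace $\pi_i$ by $\pi_i$ minus a constant'' is a red herring --- subtracting a constant does nothing to the Lipschitz constant.

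The paper's proof sidesteps the $\Lip_a$ versus $\Lip$ issue entirely by \emph{not} using balls. For $k=1$ one builds, for each $\varepsilon>0$, a countable Borel partition $\{E_n\}$ of $\X$ with the property that $\LIP(\pi\restr{E_n})\le\Lip\pi(x)+2\varepsilon$ for \emph{every} $x\in E_n$. This is done by taking the sets
\[
S_{i,j}:=\{x\colon |\pi(x)-\pi(y)|\le(q_i+\varepsilon)\sfd(x,y)\text{ for }y\in B(x,1/j),\ |\Lip\pi(x)-q_i|<\varepsilon\},
\]
refining to pieces of diameter $<1/j$, and disjointifying. One then McShane-extends $\pi\restr{E_n}$ to $\pi_n\in\Lip(\X)$ with the same Lipschitz constant and invokes the \emph{strong} locality of \cite[Theorem~3.5]{AK00} --- which permits replacing $\pi$ by $\pi_n$ wherever they agree on the Borel set $\{\chi_{E_n}f\ne 0\}$, not merely on an open neighbourhood --- to get $T(f,\pi)=\sum_n T(\chi_{E_n}f,\pi_n)$. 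The mass bound then gives the $\Lip\pi$ integrand directly. The case $k>1$ is handled by a short induction, freezing $\pi_2,\dots,\pi_k$ and treating $T(\cdot,\cdot,\pi_2,\dots,\pi_k)$ as a $1$-current. Your Step~2 (upgrading $f$ from Lipschitz to bounded Borel) is unnecessary in this approach, since the argument already works for $f\in\mathcal{B}^\infty(\X)$.
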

\begin{proof}
\textsc{Step 1: Case $k=1$}.\par
First, observe that if $\pi\colon\X\to\R$ is a function and $L,r>0$, then
\begin{equation}\label{eq:bound_current_UG_1}
    S:=\{x\in\X\colon |\pi(x)-\pi(y)|\leq L \sfd(x,y), y\in B(x,r)\}
\end{equation}
is a closed set.
To see this, suppose $x\in\X$ is a limit point of $S$, $y\in B(x,r)$, and fix $0<\varepsilon<r$.
Then, we find $z\in S\cap B(x,\varepsilon)$ with $\sfd(x,z)<r-\sfd(x,y)$.
Since $\sfd(z,y)<r$ and $\sfd(z,x)<r$, the definition of $S$ implies
\begin{align*}
|\pi(x)-\pi(y)|&\leq |\pi(x)-\pi(z)|+|\pi(z)-\pi(y)|
    \leq L\varepsilon+L\sfd(z,y)\\
    &\leq 2L\varepsilon + L\sfd(x,y).
\end{align*}
Since $\varepsilon>0$ was arbitrary, this shows that $S$ is closed. \par
We now prove the claim.
Let $\pi\in\Lip(\X)$, fix $\varepsilon>0$, and let $\{q_i\}_{i\in\N^+}$ be a dense subset of the non-negative reals.
For $i,j\in\N^+$, set
\begin{equation*}
    S_{i,j}:=\{x\in\X\colon |\pi(x)-\pi(y)|\leq (q_i+\varepsilon)\sfd(x,y),y\in B(x,1/j),
    \text{ and } |\Lip \pi(x)-q_i|<\varepsilon\}.
\end{equation*}
Since $x\mapsto \Lip \pi(x)$ is Borel, the remark at the beginning of the proof
shows that $S_{i,j}$ is a Borel set.
Also, by definition of $\Lip\pi$, it is not difficult to see that the collection $S_{i,j}$ covers $\X$.
Since $\X$ is separable, we can cover each $S_{i,j}$ with countably many
Borel sets $S_{i,j,l}$ with $\mathrm{diam} (S_{i,j,l})<1/j$.
Observe that, for $x,y,z\in S_{i,j,l}$, it holds
\begin{equation*}
    |\pi(x)-\pi(y)|\leq (q_i+\varepsilon)\sfd(x,y)\leq (\Lip\pi(z)+2\varepsilon)\sfd(x,y).
\end{equation*}
Next, let $\{\tilde{E}_n\}_{n\in\N^+}$ be a re-indexing of $\{S_{i,j,l}\}_{i,j,l\in\N^+}$
and set $E_n:=\tilde{E}_n\setminus \cup_{m<n}\tilde{E}_m$.
Then, $\{E_n\}_{n \in \N^+}$ is a countable disjoint Borel cover of $\X$, satisfying
\begin{equation} \label{eq:bound_current_UG_2}
    \LIP(\pi\vert_{E_n})\leq \Lip\pi(x)+2\varepsilon,\qquad x\in E_n,n\in\N^+.
\end{equation}
Let $\pi_n\in\Lip(\X)$ be an extension of $\pi\vert_{E_n}$ having the same Lipschitz constant.
By the continuity and locality properties \cite[Theorem 3.5]{AK00} of metric currents we have
\begin{equation}
    \label{eq:locality_pi_pii}
        T(f,\pi)=\sum_{n=1}^\infty T(\chi_{E_n}f,\pi) = \sum_{n=1}^\infty T(\chi_{E_n}f,\pi_n).
\end{equation}
By \cref{eq:locality_pi_pii}, \cref{eq:bound_current_UG_2}, and the definition of $\pi_n$, we have
\begin{equation}
    \label{eq:main_computation_bound_current}
    \begin{aligned}
         |T(f,\pi)|
         &\leq\sum_{n=1}^\infty|T(\chi_{E_n}f,\pi_n)|
         \leq \sum_{n=1}^\infty\LIP(\pi_n)\int_{E_n}|f|\,\d\|T\| \\
         &\leq \sum_{n=1}^\infty\int_{E_n}(\Lip \pi+2\varepsilon)|f|\,\d\|T\| \\
         &\leq\int \Lip\pi|f|\,\d\|T\|+2\varepsilon\|f\|_\infty\|T\|(\X).
    \end{aligned}
\end{equation}
Since $\varepsilon>0$ was arbitrary, the above concludes the proof of the case $k=1$.\\
\textsc{Step 2: Case $k>1$.}\par
We proceed by induction on $k$.
We can assume $k>1$ and that \cref{eq:bound_current_local_Lip} holds for $k-1$.
Let $T\in\cM_k(\X)$ and, for $f\in\mathcal{B}^\infty(\X)$, $\pi_1,\dots,\pi_k\in\Lip(\X)$,
set
\begin{equation*}
    S_{\pi_2,\dots,\pi_k}(f,\pi_1):=R_{\pi_1}(f,\pi_2,\dots,\pi_k):=T(f,\pi_1,\dots,\pi_k).
\end{equation*}
Observe that $S_{\pi_2,\dots,\pi_k}\in\cM_1(\X)$, $R_{\pi_1}\in\cM_{k-1}(\X)$
and $\|R_{\pi_1}\|\leq\LIP(\pi_1)\|T\|$.
Hence, the induction hypothesis gives
\begin{equation*}
    |S_{\pi_2,\dots,\pi_k}(f,\pi_1)|=|R_{\pi_1}(f,\pi_2,\dots,\pi_k)|\leq\LIP(\pi_1)\int |f|\prod_{i=2}^k\Lip\pi_i\,\d\|T\|,
\end{equation*}
which implies
\begin{equation}\label{eq:bound_current_UG_3}
    \|S_{\pi_2,\dots,\pi_k}\|(B)\leq \int_B\prod_{i=2}^k\Lip\pi_i\,\d\|T\|,
\end{equation}
for $B\subset\X$ Borel.
Since $S_{\pi_2,\dots,\pi_k}$ is a metric $1$-current, the first step of the proof shows
\begin{equation*}
    |S_{\pi_2,\dots,\pi_k}(f,\pi_1)|\leq\int|f|\Lip\pi_1\,\d\|S_{\pi_2,\dots,\pi_k}\|.
\end{equation*}
The above inequality and \cref{eq:bound_current_UG_3} conclude the proof.
\end{proof}

Inequality \eqref{eq:bound_current_local_Lip} can be further improved, as we now describe.
Let $(\X,\sfd,\mu)$ be a complete and separable metric space endowed with a Radon 
measure which is finite on bounded sets.
In \cite{BateErikSoul2024}, it is shown that
it is possible to associate to each Lipschitz function $f\in\Lip(\X)$
a Borel function $|Df|_*$, called $*$-upper gradient, which controls the oscillation of $f$ on curve fragments, except for
a negligible set of curve fragments.
For a Lipschitz function $f\colon \X\to\R$, it holds
\begin{equation*}
    |Df|_*\leq \Lip f \qquad \mu\mathrm{-a.e.}\qquad \text{and}\qquad \Lip f \leq \Lip_a f \text{ everywhere}.
\end{equation*}
In \cite{BateErikSoul2024}, the following approximation result is proven.
\begin{theorem}[{\cite[Theorem 1.6]{BateErikSoul2024}}]
\label{thm:approximation_lemma_BES}
    Let $(\X,\sfd,\mu)$ be a complete and separable metric measure space. Let $f \in \Lip_{b}(\X)$ with bounded support. Then there exists a sequence $\{f_j\}_j \subset \Lip_{b}(\X)$ with bounded support
    with $\LIP(f_j) \le \LIP(f)$ and
    $|f_j| \le |f|$ for each $j \in \N$, sucht that 
    $f_j \to f$ pointwise everywhere (and so uniformly on compact sets)
    $\Lip_a\,f_j \to |Df |_*$ and $|Df_j|_* \to |Df|_*$ pointwise $\mu$-a.e.
\end{theorem}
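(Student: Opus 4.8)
As this is \cite[Theorem 1.6]{BateErikSoul2024}, in the present paper it is cited rather than proved; were one to prove it, the plan would be as follows. The only substantive assertion is the bound $\limsup_j \Lip_a f_j \le |Df|_*$ holding $\mu$-a.e.: since $|Df_j|_* \le \Lip f_j \le \Lip_a f_j \le \LIP(f)$ pointwise, this already forces $\limsup_j |Df_j|_* \le |Df|_*$, while the reverse estimates $\liminf_j |Df_j|_* \ge |Df|_*$, and hence $\liminf_j \Lip_a f_j \ge |Df|_*$, follow from the lower semicontinuity of $g \mapsto |Dg|_*$ along the uniformly Lipschitz sequence $f_j \to f$. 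By a diagonal argument it is therefore enough to construct, for each $\varepsilon>0$, a single $g \in \Lip_b(\X)$ with bounded support such that $\LIP(g) \le \LIP(f)$, $|g| \le |f|$, $\|g-f\|_\infty < \varepsilon$, and $\Lip_a g \le |Df|_* + \varepsilon$ off a Borel set of $\mu$-measure $<\varepsilon$. (Morally $|Df|_*$ is the $\mu$-essential infimum of $\Lip_a g$ over such $g$, and the theorem says this infimum is realised by one sequence.)

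The route to such a $g$ exploits what makes $|Df|_*$ smaller than $\Lip f$: the excess of the local oscillation of $f$ over $|Df|_*$ is carried in ``directions'' invisible to $\mu$-almost every curve fragment. One decomposes $\mu$ using the Alberti representations (families of curve fragments) that witness $|Df|_*$: outside a set of measure $<\varepsilon$, $\mu$ is a countable sum of restrictions to small-diameter Borel pieces $E_n$ on each of which $|Df|_*$ is nearly constant and along whose witnessing fragments $f$ is absolutely continuous with derivative $\le |Df|_*+\varepsilon$. On each $E_n$ one replaces $f$ by a function that still agrees with $f$ along a skeleton of these fragments but is ``flattened'' in the undetected directions --- for instance an inf-convolution or McShane-type extension of the restriction of $f$ to that skeleton --- then truncates it between $-|f|$ and $|f|$ and damps it near $\partial E_n$ to keep its Lipschitz constant $\le\LIP(f)$, and finally glues the local pieces by a Lipschitz partition of unity subordinate to $\{E_n\}$.

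The main obstacle is making the gluing compatible with the two one-sided constraints $\LIP(g)\le\LIP(f)$ and $|g|\le|f|$ simultaneously: across the interfaces of the $E_n$ the flattening on one piece must match that on its neighbours, so that no oscillation is reintroduced there, and the truncation forcing $|g|\le|f|$ must not spoil the estimate $\Lip_a g \le |Df|_*+\varepsilon$. Making this work requires a careful quantitative choice of the pieces $E_n$ --- small diameter, nearly constant $|Df|_*$, nearly constant witnessing directions, and controlled overlaps --- fixed before the local extensions and the gluing are carried out; this is the technical core of \cite{BateErikSoul2024}, to which we refer for the full argument.
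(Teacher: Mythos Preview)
You correctly observe that this theorem is quoted from \cite{BateErikSoul2024} and is not proved in the present paper, so there is no proof here to compare your sketch against. Your outline of how such a proof might proceed is plausible in spirit, but since the paper simply cites the result and moves on, the appropriate response is exactly what you wrote in your first sentence: the statement is imported, not established.
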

A direct application of Theorem \ref{thm:approximation_lemma_BES}
and Proposition \ref{prop:bound_current_*uppergradient}
yields the following corollary.
\begin{corollary}
\label{cor:bound_*uppergradient}

Let $(\X,\sfd)$ be a complete and separable metric space, $k \in \N^+$ and $T \in \cM_k(\X)$. Then
\begin{equation}\label{eq:bound_current_*uppergradient}
    |T(f,\pi_1,\dots,\pi_k)|\leq \int|f|\prod_{i=1}^k|D\pi_i|_*\,\d\|T\|
\end{equation}
for $f\in\mathcal{B}^\infty(\X)$, and $\pi_1,\dots,\pi_k\in\Lip(\X)$,
where $|D\pi_i|_*$ is the $*$-upper gradient of $\pi_i$ in $(\X,\sfd,\|T\|)$.  
\end{corollary}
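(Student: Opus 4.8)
The plan is to deduce \eqref{eq:bound_current_*uppergradient} from the estimate \eqref{eq:bound_current_local_Lip} of Proposition~\ref{prop:bound_current_*uppergradient} by approximating each $\pi_i$ in the metric measure space $(\X,\sfd,\|T\|)$ through Theorem~\ref{thm:approximation_lemma_BES}; note that $\|T\|$, being a finite Borel measure on a Polish space, is Radon and finite on bounded sets, so $(\X,\sfd,\|T\|)$ is an admissible ambient space for that theorem. Since Theorem~\ref{thm:approximation_lemma_BES} applies only to functions that are bounded with bounded support, the first step is to reduce to such data. Fix a point $x_0\in\X$ and $1$-Lipschitz cut-offs $\psi_R$ (with $\psi_R\equiv1$ on $B(x_0,R)$ and $\psi_R\equiv0$ off $B(x_0,R+1)$), increasing in $R$. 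For $f\in\Lip_b(\X)$, replacing $f$ by $\psi_Rf$ and letting $R\to\infty$ reduces the claim to $f\in\Lip_b(\X)$ with $\supp f\subset\bar B(x_0,R)$: the left-hand side of \eqref{eq:bound_current_*uppergradient} converges by \eqref{eq:bound_current_local_Lip} and finiteness of $\|T\|$, the right-hand side by monotone convergence. For such $f$, multilinearity and locality of $T$ let us subtract constants so that $\pi_i(x_0)=0$ — which alters neither $T(f,\pi_1,\dots,\pi_k)$ nor $|D\pi_i|_*$ — and then replace $\pi_i$ by $\tilde\pi_i:=\psi_{R+1}\pi_i\in\Lip_b(\X)$, which has bounded support and coincides with $\pi_i$ on the open neighbourhood $B(x_0,R+1)$ of $\supp f$. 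Locality of $T$ gives $T(f,\pi_1,\dots,\pi_k)=T(f,\tilde\pi_1,\dots,\tilde\pi_k)$, and locality of $*$-upper gradients \cite{BateErikSoul2024} gives $|D\tilde\pi_i|_*=|D\pi_i|_*$ $\|T\|$-a.e.\ on $B(x_0,R+1)$, hence $\|T\|$-a.e.\ on $\{f\neq0\}$; so it suffices to prove \eqref{eq:bound_current_*uppergradient} with the $\tilde\pi_i$ in place of the $\pi_i$.

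Now apply Theorem~\ref{thm:approximation_lemma_BES} to each $\tilde\pi_i$ to obtain $\{\pi_i^j\}_j\subset\Lip_b(\X)$ with $\LIP(\pi_i^j)\le\LIP(\tilde\pi_i)$, $\pi_i^j\to\tilde\pi_i$ pointwise, and $\Lip_a\pi_i^j\to|D\tilde\pi_i|_*$ $\|T\|$-a.e. Since $f\in\Lip_b(\X)$ and $\sup_{i,j}\LIP(\pi_i^j)<\infty$, joint continuity of $T$ yields $T(f,\pi_1^j,\dots,\pi_k^j)\to T(f,\tilde\pi_1,\dots,\tilde\pi_k)$, while Proposition~\ref{prop:bound_current_*uppergradient} together with $\Lip\pi_i^j\le\Lip_a\pi_i^j$ gives
\[
|T(f,\pi_1^j,\dots,\pi_k^j)|\ \le\ \int|f|\prod_{i=1}^k\Lip_a\pi_i^j\,\d\|T\| .
\]
Because $\prod_i\Lip_a\pi_i^j\le\prod_i\LIP(\tilde\pi_i)$ while $|f|$ is bounded with bounded support and $\|T\|$ is finite, dominated convergence sends the right-hand side to $\int|f|\prod_i|D\tilde\pi_i|_*\,\d\|T\|$. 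Combining the two displays proves \eqref{eq:bound_current_*uppergradient} for $(f,\tilde\pi_1,\dots,\tilde\pi_k)$, and hence, by the reductions of the previous paragraph, for every $f\in\Lip_b(\X)$ and $\pi_1,\dots,\pi_k\in\Lip(\X)$.

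Finally, for a general $f\in\mathcal{B}^\infty(\X)$, choose a uniformly bounded sequence $f_m\in\Lip_b(\X)$ with $f_m\to f$ $\|T\|$-a.e.\ (possible since $\|T\|$ is Radon), so that $T(f_m,\pi_1,\dots,\pi_k)\to T(f,\pi_1,\dots,\pi_k)$ by the continuity of metric currents in the Borel argument \cite{AK00}, and pass to the limit on the right by dominated convergence; this establishes \eqref{eq:bound_current_*uppergradient} in full. The only delicate point is the reduction in the first paragraph: Theorem~\ref{thm:approximation_lemma_BES} forces us to localise the a priori unbounded Lipschitz functions $\pi_i$ without changing $|D\pi_i|_*$ on $\{f\neq0\}$, which is precisely where the locality of $*$-upper gradients and of the current are used; the remaining arguments are routine limiting procedures.
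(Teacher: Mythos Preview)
Your proposal is correct and follows essentially the same route as the paper's proof: localise by multiplying $f$ and the $\pi_i$ by cut-offs centred at a fixed point, invoke locality of the current and of the $*$-upper gradient (the paper cites \cite[Lemma~2.12(2)]{BateErikSoul2024} for the latter), apply Theorem~\ref{thm:approximation_lemma_BES} to the truncated $\pi_i$, feed the approximants into Proposition~\ref{prop:bound_current_*uppergradient}, and pass to the limit. The paper compresses these steps into a single displayed chain and works directly with $f\in\mathcal{B}^\infty(\X)$, whereas you treat $f\in\Lip_b(\X)$ first and extend afterwards, but the substance is the same.
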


\begin{proof}
Fix $x_0\in\X$
and, for $R>0$, define $\varphi_R(x):=(2 -\frac{\sfd(x,x_0)}{R})^+\wedge 1$, $f^R:= f \varphi^R$ and $\pi_i^R:=\pi_i \varphi^{2R}$ for $i=1,\dots,k$. We have
\begin{equation}
\label{eq:locality_star_ug}    
|T(f^R,\pi_1,\dots,\pi_k)|=|T(f^R,\pi_1^R,\dots,\pi_k^R)| \le \int|f^R|\prod_{i=1}^k|D\pi^R_i|_*\,\d\|T\| = \int|f^R|\prod_{i=1}^k|D\pi_i|_*\,\d\|T\|, 
\end{equation}
where we used the locality axiom of currents, Proposition \ref{prop:bound_current_*uppergradient},
and Theorem \ref{thm:approximation_lemma_BES} applied
to the functions $\{\pi_i^R\}_i$.
In particular, the last equality follows from \cite[Lemma 2.12, item (2)]{BateErikSoul2024}.
By letting $R$ go to infinity in \eqref{eq:locality_star_ug}, we conclude by applying the continuity axiom of currents and monotone convergence.
\end{proof}

\begin{remark}
Let $K\subset [0,1]$ be a fat Cantor set and consider the metric space $(K,|\cdot|)$. Let $T\in\cM_1(K)$ be the current induced by $K$, i.e.\ $T(f,\pi):=\int_K f(t) \pi'(t)\,\d t$.
The set $K$ does not contain any non-constant rectifiable curve. Therefore, for any $p\in [1,\infty]$, every Borel function on $K$ has $0$ as minimal
$p$-weak upper gradient in the sense of Heinonen-Koskela (for definitions, see \cite[Section 7]{HKST2015}).
It follows that, in general, the $*$-upper gradients in \cref{eq:bound_current_*uppergradient} cannot
be replaced with any weak upper gradient, for any $p \in [1,\infty]$.
\end{remark}

\section{Flat chain conjecture for metric \texorpdfstring{$1$}{1}-currents, the isomorphism theorem and quasiconvexity}
\label{sec:flat_chain_conjecture}

To avoid trivial cases, throughout this section we assume that all metric spaces have at least two points.
\begin{lemma}\label{lemma:bdry_operator_well_defined}
Let $(\X,\sfd)$ be a complete and separable metric space.
Then $\partial T\in\AEm(\X)$ for $T\in\cM_1(\X)$,
hence the boundary $\partial$ is a linear map $\partial \colon\cM_1(\X)\to\AEm(\X)$.
Moreover, $\|\partial\|=1$ if $\X$ is not purely $1$-unrectifiable, while $\cM_1(\X)=\{0\}$ otherwise.
\end{lemma}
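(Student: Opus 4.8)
The plan is to prove the three assertions in turn; only the sharp value $\|\partial\|=1$ requires real work.

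\textbf{Well-definedness and $\|\partial\|\le 1$.} Since $1\in\Lip_b(\X)$, the boundary functional $f\mapsto T(1,f)$ on $D^0(\X)=\Lip_b(\X)$ extends by the very same formula to all of $\Lip(\X)$; I would write $\partial T$ for its restriction to $\Lip_0(\X)$. The finite-mass inequality \eqref{eq:mass_bound_definition} gives $|\partial T(\pi)|\le\LIP(\pi)\,\Mass(T)$, so $\partial T\in\Lip_0(\X)^*$ with operator norm $\le\Mass(T)$, and the joint continuity axiom of $T$ says exactly that $\partial T$ is sequentially continuous under bounded pointwise convergence. By Lemma~\ref{lemma:KS_thm}, together with the fact that the canonical embedding $\epsilon\colon\AEm(\X)\to\Lip_0(\X)^*$ is isometric, $\partial T$ is identified with an element of $\AEm(\X)$ of norm at most $\Mass(T)$. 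Linearity in $T$ is immediate, so $\partial\colon\cM_1(\X)\to\AEm(\X)$ is bounded linear with $\|\partial\|\le 1$.

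\textbf{The purely $1$-unrectifiable case.} Here I would argue that every curve fragment $\gamma$ on $\X$ has $\ell(\gamma)=0$: if $\int_{{\rm dom}\,\gamma}|\dot\gamma_t|\,\d t>0$, then $\gamma({\rm dom}\,\gamma)$, being a Lipschitz image of a subset of $\R$ of positive $\mathcal{H}^1$-measure by the area formula, would be a positive-measure $1$-rectifiable subset of $\X$, contradicting pure $1$-unrectifiability. Hence $|\dot\gamma|=0$ a.e., so $(f\circ\gamma)'=0$ a.e.\ for every $f\in\Lip(\X)$ and every $\gamma$; by the defining property of the $*$-upper gradient (\cite{BateErikSoul2024}) this forces $|D\pi|_*=0$ $\mu$-a.e.\ for every $\pi\in\Lip(\X)$ and every Radon measure $\mu$. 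Taking $\mu=\|T\|$ and invoking Corollary~\ref{cor:bound_*uppergradient} yields $T=0$, i.e.\ $\cM_1(\X)=\{0\}$. (This fact is also available in the literature.)

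\textbf{Sharpness.} Assume $\X$ is not purely $1$-unrectifiable; I would produce nonzero currents $T_r$ with $\|\partial T_r\|_{\AEm(\X)}/\Mass(T_r)\to 1$. By hypothesis one finds a compact $E\subset\R$ and a Lipschitz $g\colon E\to\X$ with $\mathcal{H}^1(g(E))>0$, whence $\int_E|\dot g|>0$ by the area formula. Using Kirchheim's metric differentiability theorem and the Lebesgue density/differentiation theorems, choose $t_0\in E$ that is a density point of $E$, a Lebesgue point of $\chi_E|\dot g|$, and a point of metric differentiability of $g$ with $c:=|\dot g_{t_0}|>0$; set $E_r:=E\cap[t_0-r,t_0+r]$, let $\alpha_r<\beta_r$ be its endpoints, and put $T_r:=\curr{g\vert_{E_r}}\in\cM_1(\X)$. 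Computing $\partial T_r$ by deleting from $[\alpha_r,\beta_r]$ the complementary intervals of $E_r$ (of total length $o(r)$ by density of $t_0$), one sees that $\partial T_r$ differs from $\delta_{g(\beta_r)}-\delta_{g(\alpha_r)}$ by a molecule of $\AEm$-norm $\le\LIP(g)\,o(r)$; since $\|\delta_p-\delta_q\|_{\AEm(\X)}=\sfd(p,q)$ and metric differentiability gives $\sfd(g(\alpha_r),g(\beta_r))=c(\beta_r-\alpha_r)+o(r)=2rc+o(r)$, one gets $\|\partial T_r\|_{\AEm(\X)}\ge 2rc\,(1-o(1))$. On the other hand $\Mass(T_r)\le\int_{E_r}|\dot g|=2rc\,(1+o(1))$ by the choice of $t_0$. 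Hence the ratio tends to $1$, which with the first part gives $\|\partial\|=1$; in particular $\cM_1(\X)\ne\{0\}$ in this case.

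\textbf{Expected main obstacle.} The first two parts are essentially formal once Lemma~\ref{lemma:KS_thm} and Corollary~\ref{cor:bound_*uppergradient} are in hand. The real point is the lower bound $\|\partial\|\ge 1$: for a short sub-fragment of a merely biLipschitz piece of a rectifiable set there is no reason for the endpoint distance to be comparable to the fragment's length, and the equality $\|\partial\|=1$ forces us to locate sub-fragments where it nearly is — which is exactly what Kirchheim's metric differentiability provides. The one technical annoyance, arising because the domain of a curve fragment need not be an interval, is the bookkeeping of the complementary intervals of $E_r$, all of whose contributions are $o(r)$ by the choice of a density point.
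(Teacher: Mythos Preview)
Your proof is correct and follows essentially the same approach as the paper's: identical treatment of well-definedness, the bound $\|\partial\|\le 1$, and the purely $1$-unrectifiable case, and the same localisation-near-a-good-point strategy for sharpness. The only differences are cosmetic---you use a two-sided window $E\cap[t_0-r,t_0+r]$, decompose $\partial T_r$ explicitly via the complementary intervals, and appeal to the strong (Kirchheim) form of metric differentiability to control $\sfd(g(\alpha_r),g(\beta_r))$, whereas the paper takes a one-sided sequence $t_n\downarrow t_0$ and evaluates on the single test function $\sfd(\gamma_{t_0},\cdot)$, needing only the ordinary metric derivative.
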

\begin{proof}
From the definition of metric current, it follows that $\partial T\colon \mathrm{Lip}_0(X)\to\R$ is linear and weak* sequentially continuous and hence weak* continuous, by
\cref{lemma:KS_thm}.
It follows that the boundary operator maps $\cM_1(\X)$ to $\AEm(\X)$. \par
From $|\partial T(f)|\leq \LIP(f)\Mass{(T)}$ for $f \in \Lip_0(\X)$, we have $\|\partial T\|_{\AEm}\leq \Mass{(T)}$
and so $\|\partial\|\leq 1$.
If $X$ is not purely 1-unrectifiable, there are a compact set $K\subset\R$ of positive measure and a biLipschitz map $\gamma\colon K\to \X$.
Almost every point $t\in K$ is a density point of $K$, a Lebesgue point of $t\mapsto|\dot{\gamma}_t|$, and the metric derivative $|\dot{\gamma}_t|$ exists. Let $t_0$ be any such point and choose $t_n\downarrow t_0$ with $t_n\in K$, and set
$\gamma_n:=\gamma\vert_{[t_0,t_n]}$.

The conclusion follows if we prove
\begin{equation*}
    \liminf_{n \to \infty} \frac{\|\partial \curr{\gamma_n}\|_{\AEm(\X)}}{\Mass(\curr{\gamma_n})} \ge 1.
\end{equation*}
We compute for $f \in \Lip_0(\X)$ with $\LIP(f) \le 1$
\begin{equation*}
\begin{aligned}
    \partial \curr{\gamma_n}(f) &= \int_{[t_0,t_n] \cap K} (f \circ \gamma)'_t\,\d t \ge \int_{[t_0,t_n]} g'\,\d t- \LIP(\gamma) \mathcal{L}^1([t_0,t_n]\setminus K)\\
    &  = (f\circ\gamma)_{t_n}-(f\circ\gamma)_{t_0}- \LIP(\gamma) \mathcal{L}^1([t_0,t_n]\setminus K),
\end{aligned}
\end{equation*}
where $g_n \colon \mathbb{R}\to \mathbb{R}$ is the MacShane Lipschitz extension of $f \circ \gamma$ to $\mathbb{R}$ and we used $\LIP(g_n)\leq \LIP(\gamma_n) \le \LIP(\gamma)$. By choosing $f = \sfd((\gamma_n)_{t_0},\cdot)-\sfd((\gamma_n)_{t_0},0) \in \Lip_0(\X)$, we have
\begin{equation*}
    \|\partial \curr{\gamma_n}\|_{\AEm(\X)} \ge \sfd((\gamma_n)_{t_n}, (\gamma_n)_{t_0})-\Lip(\gamma) |[t_0,t_n]\setminus K|.
\end{equation*}
Note that $\Mass{(\curr{\gamma_n})}=\int_{[t_0,t_n]\cap K}|\dot{\gamma}_t|\,\d t$ and so $\Mass{(\curr{\gamma_n})}/(t_n-t_0)\rightarrow |\dot\gamma_{t_0}|$. Finally,
by our choice of $t_0$ and the above estimate, we have
\begin{equation*}
    \liminf_{n\rightarrow \infty}\frac{\|\partial\curr{\gamma_n}\|_{\AEm(\X)}}{\Mass(\curr{\gamma_n})}\geq \lim_{n\rightarrow\infty}\frac{d(\gamma_{t_0},\gamma_{t_n})}{|\dot\gamma_{t_0}|(t_n-t_0)}-\LIP(\gamma)\lim_{n\rightarrow\infty}\frac{|[t_n,t_0]\setminus K|}{|\dot\gamma_{t_0}|(t_n-t_0)}=1.
\end{equation*}
This shows that $\|\partial\|=1$. \par
Suppose now $\X$ is purely $1$-unrectifiable.
Then, for every biLipschitz $\gamma\in\Gamma(\X)$
and Lipschitz $f\colon\X\to\R$ it holds $(f\circ\gamma)_t'=0$ for a.e.\ $t\in{\rm dom}(\gamma)$.
That is, in the terminology of \cite{BateErikSoul2024},
every Lipschitz function $f\colon\X\to\R$ has $0$ as $*$-upper gradient;
see the comment after \cite[Proposition 2.10]{BateErikSoul2024}.
Hence, from Corollary \ref{cor:bound_*uppergradient}, we deduce $\cM_1(\X)=\{0\}$
and thus $\partial =0$.
\end{proof}
By \cref{lemma:bdry_operator_well_defined}, we know that $\partial\colon \cM_1(\X)\to \AEm(\X)$ is a bounded linear operator. It therefore induces a bounded and injective operator on the quotient Banach space $(\cM_1(\X)/\mathrm{ker}(\partial),\|\cdot\|)$, which we still denote with $\partial$.
Given a metric space $(\X,\sfd)$, define
\begin{equation*}
    \mathrm{qc}(\X):=\inf\{C\in [1,\infty]\colon \X \text{ is } C\text{-quasiconvex}\}.
\end{equation*}
\begin{theorem}[Isomorphism theorem]
\label{thm:isomorphism}
Let $(\X,\sfd)$ be a complete and separable metric space.
Then 
\[\partial\colon \cM_1(\X)/\mathrm{ker}(\partial)\to \AEm(\X)\]
is a Banach space isomorphism if and only if $\X$ is quasiconvex.
In the latter case, we have
\begin{equation}
\label{eq:3.2inThm}
    \mathrm{qc}(X)^{-1}\|\partial T\|_{\AEm(\X)}\leq \|[T]\|\leq \|\partial T\|_{\AEm(\X)} \quad\text{for every $T\in\cM_1(\X)$.}
\end{equation}
with the constants in \eqref{eq:3.2inThm} being optimal.
\end{theorem}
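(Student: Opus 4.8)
The proof naturally splits according to the two implications. The easy direction is to show that if $\partial$ descends to an isomorphism, then $\X$ must be quasiconvex. The right-hand inequality in \eqref{eq:3.2inThm} is the content of Lemma \ref{lemma:bdry_operator_well_defined} (namely $\|\partial\|\le 1$), so it always holds; the substance is the left-hand inequality. Here the plan is to start from a molecule $m_{p,q}=\delta_p-\delta_q$: if $\X$ is $C$-quasiconvex, pick a $C$-quasigeodesic $\gamma$ from $p$ to $q$, reparametrise it by constant speed on $[0,1]$, and observe that the normal current $\curr{\gamma}$ satisfies $\partial \curr{\gamma}=m_{p,q}$ (as an element of $\AEm(\X)$, since $\partial\curr{\gamma}(f)=(f\circ\gamma)_1-(f\circ\gamma)_0=f(q)-f(p)$) and $\Mass(\curr{\gamma})=\ell(\gamma)\le C\,\sfd(p,q)=C\,\|m_{p,q}\|_{\AEm(\X)}$. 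Extending by linearity to a general molecule $m=\sum_i\lambda_i m_{p_i,q_i}$ and optimising over representations gives, for every molecule $m$, a normal current $T$ with $\partial T=m$ and $\Mass(T)\le C\|m\|_{\AEm(\X)}$; hence $\|[T]\|\le C\|m\|_{\AEm(\X)}=C\|\partial T\|_{\AEm(\X)}$. Thus $\partial$ has dense image with a quantitative lower bound $\mathrm{qc}(\X)^{-1}\|\partial T\|\le\|[T]\|$ on that dense set of molecules. Since $\partial\colon\cM_1(\X)/\ker(\partial)\to\AEm(\X)$ is already known to be bounded and injective, a standard completeness/open-mapping argument (the lower bound is closed under limits because the quotient norm is lower semicontinuous and molecules are dense in $\AEm(\X)$) upgrades this to surjectivity and to the inequality \eqref{eq:3.2inThm} with $C=\mathrm{qc}(\X)$ for all $T\in\cM_1(\X)$; taking the infimum over admissible $C$ yields the constant $\mathrm{qc}(\X)$.

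For the converse — isomorphism implies quasiconvexity — suppose $\partial$ is an isomorphism with $\|[T]\|\le A\|\partial T\|_{\AEm(\X)}$ for all $T$, for some finite $A$. I would test this on molecules: given $p,q\in\X$, surjectivity produces $T\in\cM_1(\X)$ with $\partial T=m_{p,q}$ and $\Mass(T)\le \|[T]\|+\varepsilon\le A\|m_{p,q}\|_{\AEm(\X)}+\varepsilon=A\,\sfd(p,q)+\varepsilon$. The point is then to extract a quasigeodesic from such a $T$: apply the Paolini–Stepanov type representation (Theorem \ref{thm:Paolini_Stepanov}, or rather its boundary-tracking refinement from \cite{PaoliniStepanov_cycles}) — or, more simply, use Theorem \ref{thm:representation_currents_preliminaries} together with the fact that $\partial T=\delta_p-\delta_q$ forces, after decomposing $T=\int\curr{\gamma}\,\d\eta(\gamma)$ into curve fragments and discarding cycles, at least one fragment running "from near $p$ to near $q$" — to conclude that $\sfd_\ell(p,q)\le A\,\sfd(p,q)$. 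Letting the admissible constant $A$ approach the operator norm of $\partial^{-1}$ shows $\X$ is $\|\partial^{-1}\|$-quasiconvex, and in fact $\mathrm{qc}(\X)\le\|\partial^{-1}\|$; combined with the forward direction this also pins down the constant.

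\textbf{Main obstacle.} The delicate point is the converse direction: extracting a genuine \emph{quasigeodesic} (a single connected curve of controlled length) from a current $T$ with prescribed boundary $\delta_p-\delta_q$ and small mass. A current with two-point boundary need not be a single curve — it can carry boundaryless cycles and, a priori, its mass measure can be spread across many fragments none of which individually connects $p$ to $q$. One must argue that the "acyclic part" of the Paolini–Stepanov decomposition of (a normal current extending) $T$ necessarily contains fragments whose concatenation joins $p$ to $q$ with total length at most $\Mass(T)+\varepsilon$; this is precisely the content of the acyclic decomposition results of \cite{PaoliniStepanov_acyclic, PaoliniStepanov_cycles}, so the task is to invoke them correctly rather than reprove them. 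A secondary subtlety is the optimality of the constants: the lower bound requires exhibiting, for each $C<\mathrm{qc}(\X)$, a molecule witnessing $\|[T]\|\ge (\mathrm{qc}(\X)-\delta)\|\partial T\|$, which follows from the same quasigeodesic-extraction argument applied to a pair $p,q$ with $\sfd_\ell(p,q)$ close to $\mathrm{qc}(\X)\,\sfd(p,q)$, together with the lower semicontinuity estimate from Lemma \ref{lemma:bdry_operator_well_defined} to see that no current with boundary $m_{p,q}$ can do better than the intrinsic distance.
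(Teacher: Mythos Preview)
Your plan is correct in outline and, for the converse direction (isomorphism $\Rightarrow$ quasiconvexity), matches the paper closely: both you and the paper invoke the Paolini--Stepanov acyclic decomposition to show that any $T$ with $\partial T=\delta_y-\delta_x$ must satisfy $\Mass(T)\ge\sfd_\ell(x,y)$. The paper isolates this as Lemma~\ref{lemma:isomorphism_and_quasiconvexity}. One warning: your alternative suggestion to ``more simply'' use Theorem~\ref{thm:representation_currents_preliminaries} would be circular, since that representation theorem is proved downstream of the isomorphism theorem (via Corollary~\ref{coro:approximation_by_normal_currents} and Theorem~\ref{theorem:main_alberti_marchese}); stick with the Paolini--Stepanov results for normal currents directly, as the paper does.

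For the forward direction (quasiconvexity $\Rightarrow$ isomorphism) your route genuinely differs from the paper's. You build explicit preimages of molecules by summing quasigeodesic currents, obtain $\|[T]\|\le C\|\partial T\|_{\AEm(\X)}$ on the dense set of molecules, and extend by completeness of the quotient Banach space. The paper instead argues via the adjoint: it shows $\|\partial^*f\|\ge\mathrm{qc}(\X)^{-1}\LIP(f)$ for $f\in\Lip_0(\X)$ by testing on a single quasigeodesic current, deduces that $\partial^*$ is injective with norm-closed range, and then uses the closed-range theorem together with weak$^*$-density of the range (which follows from injectivity of $\partial$ on the quotient) to conclude that $\partial^*$, and hence $\partial$, is an isomorphism. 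Your direct argument is more elementary and constructive; the paper's duality argument is shorter once the functional-analytic machinery is in hand. Both yield $\|\partial^{-1}\|\le\mathrm{qc}(\X)$, with equality following in either approach from Lemma~\ref{lemma:isomorphism_and_quasiconvexity}. A small correction to your write-up: $\|\partial\|\le 1$ from Lemma~\ref{lemma:bdry_operator_well_defined} gives $\|\partial T\|_{\AEm(\X)}\le\|[T]\|$, not the inequality you label as the right-hand one; the substantive bound is $\|[T]\|\le\mathrm{qc}(\X)\|\partial T\|_{\AEm(\X)}$.
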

\begin{anfxnote}{Comparison with other work}
The above can be seen as a generalisation results of the type https://arxiv.org/pdf/2503.04390
We should check if we can recover the results in the paper from the isomorphism theorem
\end{anfxnote}
The optimality of the constants can be equivalently formulated as $\|\partial\|=1$ and $\|\partial^{-1}\|=\mathrm{qc}(\X)$.
Given a metric space $\X$ and two points $x,y\in \X$, we set
\begin{equation*}
    \mathrm{qc}(x,y):=\inf\{C\in [1,\infty]\colon \text{there is a }C\text{-quasiconvex curve from }x\text{ to }y\},
\end{equation*}
with the usual convention $\mathrm{qc}(x,y)=\infty$ if no such curve exists.
Note that $\mathrm{qc}(\X)=\sup_{x, y}\mathrm{qc}(x,y)$.

\begin{lemma}
\label{lemma:isomorphism_and_quasiconvexity}
    Let $(\X,\sfd)$ be a complete and separable metric space. Let $m \in \AEm(\X)$ be a molecule. Then, for any $T \in \cM_1(\X)$ with $\partial T=m$, it holds \begin{equation}
    \label{eq:bound_from_below_mass}
        \Mass(T) \ge \|m\|_{\AEm(\X,\sfd_\ell)},
    \end{equation}
    where $\sfd_\ell\colon\X\times\X\to[0,\infty]$ is the (generalised) length distance induced by $\sfd$.
    In particular, if $m:=\delta_y-\delta_x\in\AEm(X)$ with $x,y \in \X$, we have for every $T \in \cM_1(\X)$ with $\partial T=m$ that $\Mass(T)\geq\mathrm{qc}(x,y)\sfd(x,y)$.
\end{lemma}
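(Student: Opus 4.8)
\emph{Strategy.} I would derive \eqref{eq:bound_from_below_mass} from the Paolini--Stepanov decomposition of $T$ into curves, which is available here because $T$ is normal: writing the molecule as $m=\sum_i\lambda_i(\delta_{p_i}-\delta_{q_i})$ we have the mass bound $|m(f)|\le\int|f|\,\d\nu$ for the finite measure $\nu:=\sum_i|\lambda_i|(\delta_{p_i}+\delta_{q_i})$, so $\partial T=m$ has finite mass and $T\in\cN_1(\X)$. Applying the representation \eqref{eq:normal_decomposition} (cf.\ Theorem \ref{thm:Paolini_Stepanov}), there is a finite Borel measure $\eta$ on Lipschitz curves with $T=\int\curr{\gamma}\,\d\eta(\gamma)$ and $\Mass(T)=\int\Mass(\curr{\gamma})\,\d\eta(\gamma)$; moreover these curves traverse their images without cancellation, so $\Mass(\curr{\gamma})=\ell(\gamma)\ge\sfd_\ell(\gamma_0,\gamma_1)$ for $\eta$-a.e.\ $\gamma$. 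Taking $f=1$ in the representation (equivalently, applying $\partial$) yields the identity of finite signed measures on $\X$
\[
m=\partial T=\int\bigl(\delta_{\gamma_1}-\delta_{\gamma_0}\bigr)\,\d\eta(\gamma),
\]
which one checks by testing against Lipschitz functions --- using $\curr{\gamma}(1,\pi)=\pi(\gamma_1)-\pi(\gamma_0)$ --- and then extending to bounded Borel test functions.

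\emph{Conclusion.} The identity above exhibits $m$ as a superposition of the elementary molecules $\delta_{\gamma_1}-\delta_{\gamma_0}$, for which $\|\delta_{\gamma_1}-\delta_{\gamma_0}\|_{\AEm(\X,\sfd_\ell)}=\sfd_\ell(\gamma_0,\gamma_1)$. Hence, by the triangle inequality for $\|\cdot\|_{\AEm(\X,\sfd_\ell)}$ in its integral (Jensen) form,
\[
\|m\|_{\AEm(\X,\sfd_\ell)}\ \le\ \int\sfd_\ell(\gamma_0,\gamma_1)\,\d\eta(\gamma)\ \le\ \int\Mass(\curr{\gamma})\,\d\eta(\gamma)\ =\ \Mass(T),
\]
which is \eqref{eq:bound_from_below_mass}. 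For the last assertion, when $m=\delta_y-\delta_x$ one has $\|m\|_{\AEm(\X,\sfd_\ell)}=\sfd_\ell(x,y)$, and $\sfd_\ell(x,y)=\mathrm{qc}(x,y)\,\sfd(x,y)$ is immediate from the definition of $\mathrm{qc}(x,y)$ (both sides being $+\infty$ if no rectifiable curve joins $x$ to $y$, and $0$ if $x=y$).

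\emph{Main obstacle.} The one step that is not purely formal is the integrated triangle inequality in the display above: since $\sfd_\ell$ need not be $\sfd$-continuous and $(\X,\sfd_\ell)$ need not be separable, the assignment $\gamma\mapsto\delta_{\gamma_1}-\delta_{\gamma_0}$ is not obviously Bochner measurable into $\AEm(\X,\sfd_\ell)$. I would circumvent this by first restricting $\eta$ to a $\sigma$-compact set of rectifiable curves carrying full mass (possible since $\int\ell(\gamma)\,\d\eta=\Mass(T)<\infty$) and then approximating $\eta$ --- weakly and with convergence of the $\ell$-moment --- by finitely supported measures $\eta_k=\sum_j c^k_j\delta_{\gamma^{(j,k)}}$. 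For each $k$ the molecule $m_k:=\int(\delta_{\gamma_1}-\delta_{\gamma_0})\,\d\eta_k=\sum_j c^k_j(\delta_{\gamma^{(j,k)}_1}-\delta_{\gamma^{(j,k)}_0})$ has the explicit curve representation $\sum_j c^k_j\curr{\gamma^{(j,k)}}$, whence $\|m_k\|_{\AEm(\X,\sfd_\ell)}\le\sum_j c^k_j\,\ell(\gamma^{(j,k)})=\int\ell\,\d\eta_k\to\Mass(T)$; it then remains to verify that $m_k\to m$ in a way that lets this bound pass to the limit, which is the genuinely technical part of the argument. Everything else reduces to the soft manipulations above.
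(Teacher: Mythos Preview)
Your overall strategy---decompose $T$ via Paolini--Stepanov and use $\ell(\gamma)\ge\sfd_\ell(\gamma_0,\gamma_1)$---matches the paper's. The difference is exactly at the obstacle you flag: the paper bypasses the ``integrated triangle inequality'' entirely by invoking a sharper decomposition. After splitting off the cyclic part (so $T_0$ is acyclic with $\partial T_0=m$ and $\Mass(T_0)\le\Mass(T)$), the paper applies \cite[Theorem~5.1]{PaoliniStepanov_acyclic}, which gives not only $T_0=\int\curr{\gamma}\,\d\eta$ and $\Mass(T_0)=\int\ell(\gamma)\,\d\eta$ but also
\[
(e_0)_*\eta=m^-=\sum_i a_i\delta_{x_i},\qquad (e_1)_*\eta=m^+=\sum_i a_i\delta_{y_i}.
\]
Since $m$ is a molecule, this forces $\eta$-a.e.\ curve to have endpoints in the finite set $\{x_1,\dots,x_n,y_1,\dots,y_n\}$. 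Setting $b_{i,j}:=\eta(\{\gamma:\gamma_0=x_i,\gamma_1=y_j\})$ reduces everything to a finite transport plan: for any $1$-$\sfd_\ell$-Lipschitz $f$,
\[
m(f)=\sum_{i,j}b_{i,j}(f(y_j)-f(x_i))\le\sum_{i,j}b_{i,j}\sfd_\ell(x_i,y_j)\le\sum_{i,j}\int_{\{\gamma_0=x_i,\gamma_1=y_j\}}\ell(\gamma)\,\d\eta=\Mass(T_0)\le\Mass(T),
\]
with no measurability or approximation issues whatsoever.

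Your proposed fix (approximate $\eta$ by finitely supported $\eta_k$) does not obviously close the gap: weak convergence of $\eta_k$ gives $m_k\to m$ in $\AEm(\X,\sfd)$, but you need lower semicontinuity of $\|\cdot\|_{\AEm(\X,\sfd_\ell)}$ along this sequence, and since $\sfd_\ell$ need not be $\sfd$-continuous there is no reason for this to hold. Equivalently, the duality approach $m(f)=\int(f(\gamma_1)-f(\gamma_0))\,\d\eta$ for $\sfd_\ell$-Lipschitz $f$ fails because such $f$ need not be $\sfd$-Borel, so the integrand need not be $\eta$-measurable. The endpoint-control version of Paolini--Stepanov is precisely what turns this into a finite sum and makes the problem evaporate.
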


\begin{proof}
    We assume $m\neq 0$, otherwise there is nothing to prove.
    Since $m \in \AEm(\X)$ is a molecule, we write it as $m=\sum_{i=1}^n a_i (\delta_{y_i}-\delta_{x_i})$ for some $n \in \mathbb{N}$, where $a_i >0$ for every $i$ and the points $x_i,y_i$ are all distinct.
    Since $T$ is a normal $1$-current, we apply \cite[Proposition 3.8]{PaoliniStepanov_acyclic} and we have that $T=C+T_0$, where $C$ is a cycle of $T$ and $T_0$ is acyclic, $\Mass(T_0) \le \Mass(T)$ and $\partial T_0 =m$; see \cite[Definition 3.7]{PaoliniStepanov_acyclic} for the definition of cycle and acyclic current. We apply \cite[Theorem 5.1]{PaoliniStepanov_acyclic} to get a nonnegative Borel measure $\eta$ on $C([0,1],\X)$ concentrated on rectifiable paths such that
    \begin{equation}
    \label{eq:representation_of_T0}
        T_0 = \int \curr{\gamma}\,\d \eta(\gamma),\quad \Mass(T_0)= \int \ell(\gamma)\,\d \eta(\gamma)
    \end{equation}
    and 
    \begin{equation}
    \label{eq:representation_of_T0_2}
     (e_0)_*\eta =(\partial T_0)^-=\sum_i a_i\delta_{x_i}\text{,}\quad(e_1)_*\eta = (\partial T_0)^+=\sum_i a_i \delta_{y_i}.   
    \end{equation}
    For $1\leq i,j\leq n$, we define
    \begin{equation*}
        \eta_{x_i,y_j}:=\eta\restr{e_0^{-1}\{x_i\}\cap e_1^{-1}\{y_j\}}
    \end{equation*}
    and $b_{i,j}:=\mathbb{\eta}_{x_i,y_j}(C([0,1],\X))$.

    Equation \eqref{eq:representation_of_T0_2} implies ${\e_0}_*\eta(\X\setminus \{x_i\}_i)={\e_1}_*\eta(\X\setminus \{y_i\}_i)=0$, from which $\eta(e_1^{-1}(\X \setminus \{y_i\}_i) \cap e_0^{-1}(\X \setminus \{x_i\}_i))=0$ follows. That is, $\eta$ is concentrated on
    \begin{equation*}
        \{\gamma \in C([0,1],\X):\, \gamma_0 \in \{x_1,\dots,x_n\},\,\gamma_1 \in \{y_1,\dots,y_n\} \}.
    \end{equation*}
    This property and the fact that $\{\gamma:\gamma_0=x_i\}=\cup_j \{\gamma:\gamma_0=x_i,\gamma_0=y_j\}$ where the sets in the union are disjoint imply
    \begin{equation}
        \label{eq:balance_conditions}
        \begin{aligned}
        &\sum_{i,j=1}^n \eta_{x_i,y_j} = \eta,\\
        &\sum_i b_{i,j}=a_j\text{ for every }j \in \{1,\dots,n\}\text{ and }\sum_j b_{i,j}=a_i\text{ for every }i \in \{1,\dots,n\}.
        \end{aligned}
    \end{equation}
    Moreover, since $\eta_{x_i,y_j}$ is concentrated on curves connecting $x_i$ to $y_j$ we have that for every $i,j$
    \begin{equation}
    \label{eq:bound_intrinsic_distance}
        \ell(\gamma)\ge\sfd_\ell(x_i,y_j) \quad \text{for }\eta_{x_i,y_j}\text{-a.e.\ }\gamma.
    \end{equation}

    Let $f \in \Lip(\X,\sfd_\ell)$ be $1$-Lipschitz.
    We compute
    \begin{equation*}
    \begin{aligned}
        \sum_{i=1}^m a_i \left( \int f \,\d \delta_{x_i}- \int f \,\d \delta_{y_i}
    \right) &\stackrel{\eqref{eq:balance_conditions}}{=}\sum_{i,j=1}^n b_{i,j} (f(x_i)-f(y_j)) \le \sum_{i=1}^n \sum_{j=1}^n b_{i,j} \sfd_\ell(x_i,y_j)\\
    & \stackrel{\eqref{eq:bound_intrinsic_distance}}{\le} \sum_{i=1}^n \sum_{j=1}^n \int \ell(\gamma) \,\d \eta_{x_i,y_j} \stackrel{\eqref{eq:balance_conditions}}{=} \int \ell(\gamma)\,\d \eta\stackrel{\eqref{eq:representation_of_T0}}{=}\Mass(T_0) \le \Mass(T).
    \end{aligned}
    \end{equation*}
    By taking the supremum over $1$-Lipschitz functions we have \eqref{eq:bound_from_below_mass}.
    Lastly, note that $\sfd_\ell(x,y)=\mathrm{qc}(x,y)\sfd(x,y)$ for $x,y\in\X$.
\end{proof}

\begin{proof}[Proof of Theorem \ref{thm:isomorphism}]
From Lemma \ref{lemma:bdry_operator_well_defined}, the linear operator
$\partial\colon \cM_1(X)/\mathrm{ker}(\partial)\to \AEm(X)$ is well-defined and bounded.
Assume $X$ is quasiconvex and consider the adjoint
$\partial^*\colon \mathrm{Lip}_0(X)\to (\cM_1(X)/\mathrm{ker}(\partial))^*$,
where we have used the isometric identification of $(\AEm(X))^*$ with $\mathrm{Lip}_0(X)$ from \eqref{E:I-def}.

The operator
\begin{equation*}
    \partial\colon \cM_1(\X)/\mathrm{ker}(\partial)\to \AEm(\X)
\end{equation*}
is injective by definition. It follows that the adjoint operator $\partial^*$ has weak$^*$ dense range; see \cite[Chapter VI, Proposition 1.8]{Conway_fun_analysis} together with the bipolar theorem \cite[1.8 Bipolar Theorem, Chapter 5]{Conway_fun_analysis}.

We claim that, for $f\in\mathrm{Lip}_0(\X)$, it holds 
\begin{equation}\label{eq:isomorphism_closed_range}
    \|\partial^* f\|\geq \mathrm{qc}(\X)^{-1}\mathrm{Lip}(f).
\end{equation}
Assuming the claim, this implies that $\partial^*$ is injective and has norm-closed range.
Being an adjoint operator, $\partial^*$ has norm-closed range if and only if it has weak*-closed range \cite[Theorem 1.10]{Conway_fun_analysis} which, by the previous discussion, is the case if and only if $\partial^*$ is onto.
Therefore, we have that $\partial^*$ is an isomorphism,
and hence so is $\partial$ \cite[Chapter VI, Proposition 1.9]{Conway_fun_analysis}. \par
Let $f\in\mathrm{Lip}_0(X)$ be nonzero, $0<\varepsilon<\LIP(f)$, and $x,y\in X$ distinct such that
$f(y)-f(x)\geq (\LIP(f)-\varepsilon)d(x,y)$.
Let $\gamma\in C([0,1];X)$ be a $(\mathrm{qc}(X)+\varepsilon)$-quasiconvex curve from $x$ to $y$ and set $T_0:=\curr{\gamma}/\ell(\gamma)$. Note that $\|[T_0]\|\leq \Mass{(T_0)}\leq 1$.
We have
\begin{align*}
\|\partial^*f\| &= \sup\{\langle\partial^*f,[T]\rangle\colon \|[T]\|\leq 1\}
= \sup\{\langle f,\partial T\rangle\colon \|[T]\|\leq 1\} \\
&\geq \partial T_0(f) =\frac{(f\circ\gamma)_1-(f\circ\gamma)_0}{\ell(\gamma)}\geq \frac{\LIP(f)-\varepsilon}{\mathrm{qc}(\X)+\varepsilon}.
\end{align*}
Since $0<\varepsilon<\LIP(f)$ was arbitrary, we conclude that $\|\partial^*f\|\geq \mathrm{qc}(\X)^{-1}\LIP(f)$ for each $f\in\Lip_0(\X)$.
This concludes the proof of the claim.

Moreover, we have
$\|\partial^{-1}\|=\|(\partial^*)^{-1}\|\leq \mathrm{qc}(\X)$, thus together with \cref{lemma:isomorphism_and_quasiconvexity} gives $\|\partial^{-1}\|=\mathrm{qc}(X)$. \par
Lastly, if $X$ is not quasiconvex, we prove that $\partial \colon \cM_1(\X)/\ker(\partial) \to \AEm(\X)$ is not an isomorphism.
Assume it is. Since $\X$ is 
not quasiconvex, there exist $x_i,y_i \in \X$ such that $\mathrm{qc}(x_i,y_i) \ge i$ for every $i \in \mathbb{N}$. Setting $m_i:=\delta_{y_i}-\delta_{x_i} \in \AEm(\X)$, by Lemma \ref{lemma:isomorphism_and_quasiconvexity} we have that for every $T \in \cM_1(\X)$ with $\partial T=m_i$ we have $\Mass{(T)}\geq i\d(x_i,y_i)=i\|m_i\|_{\AEm(\X)}$.
Hence,
\begin{equation*}
\|\partial^{-1}m_i\|\geq i\|m_i\|_{\AEm(\X)},
\end{equation*}
contradicting the boundedness assumption on $\partial^{-1}$.
\end{proof}
\begin{corollary}
\label{coro:approximation_by_normal_currents}
Let $\X$ be a complete and quasiconvex metric space.
Let $T\in\cM_1(\X)$ and suppose its mass measure is inner regular by compact sets.
Then there is a sequence $T_i$ of normal $1$-currents such that
$\Mass{(T-T_i)}\rightarrow 0$.
\end{corollary}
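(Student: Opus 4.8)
The plan is to reduce to a separable quasiconvex subspace, apply the Isomorphism theorem (Theorem~\ref{thm:isomorphism}) there, and transfer the conclusion back to $\X$ by pushforward. Since $\|T\|$ is inner regular by compact sets, I would first choose a non-decreasing sequence of compact sets $K_n$ with $\|T\|(\X\setminus K_n)\to0$, so that $\|T\|$ is concentrated on the $\sigma$-compact set $E:=\bigcup_nK_n$; as $\supp(T)=\supp(\|T\|)$, this gives $\supp(T)\subset\overline E$. (If $E=\varnothing$ then $T=0$, and if $E$ is a single point then $\|T\|$ is a point mass, which forces $T=0$ by locality; so we may assume neither, whence the $Y$ constructed below has at least two points.) The set $\overline E$ is separable but in general not quasiconvex, so the next step is to enlarge it to a closed separable quasiconvex subspace $Y\supset\overline E$. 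Fixing $L$ with $\X$ being $L$-quasiconvex, I would take a countable dense $C_0\subset E$ and recursively let $C_{k+1}$ be $C_k$ together with, for all $p,q\in C_k$ and all $j\in\N$, a countable dense subset of the image of some $(L+\tfrac1j)$-quasigeodesic from $p$ to $q$ in $\X$; then $C_\infty:=\bigcup_kC_k$ is countable and $Y:=\overline{C_\infty}$ is closed (hence complete), separable, and contains $\overline E\supset\supp(T)$. That $Y$ is quasiconvex follows because compactness of the quasigeodesic images forces $Y$ to contain a genuine $(L+\tfrac1j)$-quasigeodesic between any two points of $C_\infty$; then, given $x,y\in Y$, approximating $x$ and $y$ by points of $C_\infty$ along rapidly summable sequences and concatenating the corresponding quasigeodesics produces a rectifiable curve in $Y$ from $x$ to $y$ of length at most $(L+1)\sfd(x,y)+\varepsilon$ for any prescribed $\varepsilon>0$, so $Y$ is $(L+1)$-quasiconvex.

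Next, since $\supp(T)\subset Y$ and $\|T\|$ is inner regular, Proposition~\ref{prop:isometry_of_currents_on_C} (whose isometry is pushforward along the inclusion $\iota\colon Y\hookrightarrow\X$) yields $T'\in\cM_1(Y)$ with inner regular mass and $\iota_*T'=T$ (in particular $\Mass(T')=\Mass(T)$). As $(Y,\sfd|_Y)$ is complete, separable and quasiconvex, Theorem~\ref{thm:isomorphism} tells us that $\partial\colon\cM_1(Y)/\ker(\partial)\to\AEm(Y)$ is a Banach space isomorphism with $\|\partial^{-1}\|=\mathrm{qc}(Y)$. Since $\AEm(Y)$ is the completion of the molecules, I would pick molecules $m_i\in\AEm(Y)$ with $\|m_i-\partial T'\|_{\AEm(Y)}\to0$ and, via the isomorphism, representatives $T_i'\in\cM_1(Y)$ with $\partial T_i'=m_i$ and $\|[T_i'-T']\|\le\mathrm{qc}(Y)\,\|m_i-\partial T'\|_{\AEm(Y)}\to0$. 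By definition of the quotient norm there are boundaryless currents $Z_i\in\ker(\partial)\subset\cM_1(Y)$ with $\Mass(T_i'-T'-Z_i)\le\|[T_i'-T']\|+2^{-i}$; put $\hat T_i:=T_i'-Z_i$. Then $\partial\hat T_i=m_i$ is a molecule, hence a metric $0$-current of finite mass, so $\hat T_i$ is a normal $1$-current, while $\Mass(T'-\hat T_i)=\Mass(T_i'-T'-Z_i)\to0$.

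Finally, I would push forward: $T_i:=\iota_*\hat T_i\in\cM_1(\X)$ has $\partial T_i=\iota_*(\partial\hat T_i)=\iota_*m_i$, again a molecule, so $T_i$ is a normal $1$-current, and $\Mass(T-T_i)=\Mass(\iota_*(T'-\hat T_i))\le\Mass(T'-\hat T_i)\to0$, which is the claim. I expect the construction of the separable quasiconvex subspace $Y$ to be the only genuinely delicate point: quasiconvexity is inherited by neither $\overline E$ nor any countable enlargement of it, so one must pass to a \emph{closed} separable subspace and verify that quasiconvexity survives the closure — this is exactly what the iterative construction above and the concatenation-of-quasigeodesics length estimate are designed to do. Everything else is formal, relying on the Isomorphism theorem together with the elementary facts that molecules are metric $0$-currents of finite mass and that the boundary operator commutes with pushforward.
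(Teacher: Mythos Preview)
Your proof is correct and follows essentially the same approach as the paper. The only difference is that the paper outsources the construction of the closed, separable, quasiconvex subspace $Y\supset\supp(T)$ to \cite[Lemma~2.1]{Hajlasz2018}, whereas you build it by hand via the standard iterative enlargement-and-closure argument; everything else (restricting $T$ to $Y$ via Lemma~\ref{lemma:restrictions_of_currents}/Proposition~\ref{prop:isometry_of_currents_on_C}, approximating $\partial T'$ by molecules, inverting via the Isomorphism theorem, correcting by boundaryless currents, and pushing forward) matches the paper's proof line by line.
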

\begin{proof}
We first reduce to the separable case.
Since $\|T\|$ is inner regular by compact sets, it is concentrated on a $\sigma$-compact
set $S$, which in particular is separable.
By \cite[Lemma 2.1]{Hajlasz2018}, there is a closed, separable, quasiconvex
set $F$ containing $S$.
Applying \cref{lemma:restrictions_of_currents}, we find a metric current $\widehat{T}\in\cM_1(F)$
such that $\iota_*\widehat{T}=T$, where $\iota\colon F\to\X$ is the inclusion map.\par
Recall that molecules are dense in $\AEm(F)$ and so there exists a sequence $m_i\in\AEm(F)$ of molecules converging to $\partial \widehat{T}\in\AEm(F)$ in the Arens-Eells norm.
Since $F$ satisfies the assumptions of \cref{thm:isomorphism}, we have $\|\partial^{-1}m_i-[\widehat{T}]\|\rightarrow 0$ as $i\rightarrow \infty$.
The definition of quotient norm reads
\begin{equation*}
    \|\partial^{-1}m_i-[\widehat{T}]\|=\inf\{\Mass{(\partial^{-1}m_i+C-\widehat{T})}\colon C\in\cM_1(F),\partial C=0\},
\end{equation*}
and so there are $C_i\in\cM_1(F)$ such that $\Mass{(\partial^{-1}m_i+C_i-\widehat{T})}\rightarrow 0$.
Then $\widehat{T}_i:=\partial^{-1}m_i+C_i$ is metric $1$-current whose boundary $\partial \widehat{T}_i=m_i$ is a molecule, i.e.\ a linear combination of Dirac deltas. In particular, $\widehat{T}_i$ and $T_i:=\iota_*\widehat{T}_i$ are normal currents.
Lastly, since $\iota$ is $1$-Lipschitz
\begin{equation*}
    \Mass(T_i-T)=\Mass(\iota_*(\widehat{T}_i-\widehat{T}))\leq\Mass(\widehat{T}_i-\widehat{T})\rightarrow 0.
\end{equation*}
\end{proof}

\section{Homotopy formulas and polyhedral approximation theorems for normal currents}
\label{sec:deformations}

The goal of this section is to prove a strict polyhedral approximation theorem for normal $1$-currents in metric spaces admitting a conical geodesic bicombing. This class of metric spaces includes Banach spaces and ${\sf CAT}(0)$ spaces. The main tool to prove this result is a homotopy formula in the same setting.
In the last part of the section, we prove a homotopy formula for normal $k$-currents in Banach spaces for $k\geq 1$. This last result will be employed in Section \ref{sec:alberti_marchese}.

\subsection{Strict polyhedral approximation theorem for \texorpdfstring{$k=1$}{k=1} in metric spaces admitting a conical geodesic bicombing}

A \emph{geodesic bicombing} on a geodesic metric space $(\X,\sfd)$ is a map $\sigma\colon \X\times\X\times[0,1]\to\X$ such that, for $x,y\in\X$, $\sigma_{x,y}:=\sigma(x,y,\cdot)$ is a
(constant speed) geodesic from $x$ to $y$. It is \emph{conical} if it additionally satisfies
\begin{equation*}
    \sfd(\sigma_{x,y}(t),\sigma_{x',y'}(t)) \le (1-t)\sfd(x,x')+t\sfd(y,y'),\qquad t \in [0,1] 
\end{equation*}
and for all $x,y,x',y' \in \X$.

This condition was proven in injective metric spaces in \cite[Proposition 3.8]{Lang2013} and then deeply studied in \cite{DescombesLang2015}; it includes convex subsets of 
Banach spaces (where the bicombing is given by line segments) and ${\sf CAT}(0)$ spaces (where the bicombing is given by the unique geodesic connecting two points).

\begin{lemma}[Simplified homotopy lemma in conical geodesic bicombing metric spaces]
\label{lemma:homotopy_lemma}
Let $(\X,\sfd)$ be a complete metric space admitting a conical geodesic bicombing.
Then, for every two Lipschitz curves $\gamma^0,\gamma^1\colon[0,1]\to\X$,
there are metric currents $R=R_{\gamma^0,\gamma^1}\in\cM_1(\X)$, $S=S_{\gamma^0,\gamma^1}\in\cM_2(\X)$ satisfying
$\curr{\gamma^0}-\curr{\gamma^1}=\partial S + R$,
and
    \begin{align*}
        \Mass(S) &\leq \big(\ell(\gamma^0)+\ell(\gamma^1)\big)\sfd_{\infty}(\gamma^0, \gamma^1), \\
        \Mass(R)&\leq \sfd(\gamma^0_0,\gamma^1_0)+\sfd(\gamma^0_1,\gamma^1_1)\leq 2\sfd_{\infty}(\gamma^0, \gamma^1);
    \end{align*}
in particular, 
    \begin{equation*}
        \Flatnorm(\curr{\gamma^0}-\curr{\gamma^1}) \le \big(\ell(\gamma^0)+\ell(\gamma^1)+2\big)\sfd_{\infty}(\gamma^0, \gamma^1).
    \end{equation*}
Moreover, for each $f\in\mathcal{B}^\infty(\X)$, $\pi_1,\pi_2,\pi\in\Lip(\X)$,
the maps
\begin{align*}
    (\gamma^0,\gamma^1)&\mapsto S_{\gamma^0,\gamma^1}(f,\pi_1,\pi_2),\\
    (\gamma^0,\gamma^1)&\mapsto R_{\gamma^0,\gamma^1}(f,\pi)
\end{align*}
are Borel measurable. Here, recall that $\Lip([0,1];\X)$ is endowed with the topology induced by the supremum distance $\sfd_\infty$.
\end{lemma}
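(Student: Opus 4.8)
\emph{Construction.} The plan is to build the homotopy from the bicombing: realise $S$ as the pushforward of the standard unit-square current under the prism map associated to $\sigma$, and take $R$ to be the difference of the metric $1$-currents associated to the two geodesics joining the endpoints. Concretely, given Lipschitz $\gamma^0,\gamma^1\colon[0,1]\to\X$, set $H=H_{\gamma^0,\gamma^1}\colon[0,1]^2\to\X$, $H(s,t):=\sigma_{\gamma^0_s,\gamma^1_s}(t)$. The conical inequality gives $\sfd(H(s,t),H(s',t))\le(1-t)\sfd(\gamma^0_s,\gamma^0_{s'})+t\,\sfd(\gamma^1_s,\gamma^1_{s'})$, while $\sigma_{\gamma^0_s,\gamma^1_s}$ being a constant-speed geodesic gives $\sfd(H(s,t),H(s,t'))=\sfd(\gamma^0_s,\gamma^1_s)\,|t-t'|$; hence $H$ is Lipschitz, and the same inequality with both variables moving shows $\sigma$ is jointly continuous, so $(\gamma^0,\gamma^1,s,t)\mapsto H_{\gamma^0,\gamma^1}(s,t)$ is continuous. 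Writing $\curr{[0,1]^2}$ for the standard metric $2$-current on the square, $\curr{[0,1]^2}(g,u_1,u_2)=\int_{[0,1]^2}g\,(\partial_s u_1\,\partial_t u_2-\partial_t u_1\,\partial_s u_2)\,\d s\,\d t$, I would set
\[S_{\gamma^0,\gamma^1}:=H_*\curr{[0,1]^2},\qquad R_{\gamma^0,\gamma^1}:=\curr{\sigma_{\gamma^0_0,\gamma^1_0}}-\curr{\sigma_{\gamma^0_1,\gamma^1_1}}.\]

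\emph{Decomposition identity.} By Stokes' theorem, $\partial\curr{[0,1]^2}=\curr{[0,1]\times\{0\}}-\curr{[0,1]\times\{1\}}+\curr{\{1\}\times[0,1]}-\curr{\{0\}\times[0,1]}$. Since $\partial$ commutes with pushforward, and $H$ restricted to the bottom and top edges (parametrised by $s$) equals $\gamma^0$ and $\gamma^1$, while on the left and right edges (parametrised by $t$) it equals $\sigma_{\gamma^0_0,\gamma^1_0}$ and $\sigma_{\gamma^0_1,\gamma^1_1}$, pushing forward gives $\partial S=\curr{\gamma^0}-\curr{\gamma^1}+\curr{\sigma_{\gamma^0_1,\gamma^1_1}}-\curr{\sigma_{\gamma^0_0,\gamma^1_0}}$, that is $\curr{\gamma^0}-\curr{\gamma^1}=\partial S+R$, which is the required relation.

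\emph{Mass estimates.} Since geodesics realise distances, $\Mass(R)\le\ell(\sigma_{\gamma^0_0,\gamma^1_0})+\ell(\sigma_{\gamma^0_1,\gamma^1_1})=\sfd(\gamma^0_0,\gamma^1_0)+\sfd(\gamma^0_1,\gamma^1_1)\le2\,\sfd_\infty(\gamma^0,\gamma^1)$. For $S$, I would evaluate the pushforward and bound the $2\times2$ Jacobian of $(\pi_1\circ H,\pi_2\circ H)$ by $\LIP(\pi_1)\LIP(\pi_2)$ times twice the product of the metric partial derivatives of $H$, which by the inequalities above are bounded a.e.\ by $(1-t)|\dot\gamma^0_s|+t|\dot\gamma^1_s|$ in the $s$-direction and by $\sfd(\gamma^0_s,\gamma^1_s)$ in the $t$-direction; thus $\|S\|$ is dominated by the $H$-image of the associated weighted Lebesgue measure on $[0,1]^2$, and integrating out $t$ — the factor $2$ from the Jacobian being cancelled by $\int_0^1(1-t)\,\d t=\int_0^1 t\,\d t=\tfrac12$ — together with $\sfd(\gamma^0_s,\gamma^1_s)\le\sfd_\infty(\gamma^0,\gamma^1)$, gives $\Mass(S)\le\int_0^1\sfd(\gamma^0_s,\gamma^1_s)(|\dot\gamma^0_s|+|\dot\gamma^1_s|)\,\d s\le\sfd_\infty(\gamma^0,\gamma^1)(\ell(\gamma^0)+\ell(\gamma^1))$. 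The flat-norm bound then follows from $\Flatnorm(\curr{\gamma^0}-\curr{\gamma^1})\le\Mass(S)+\Mass(R)$.

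\emph{Measurability and main obstacle.} For fixed $f\in\mathcal{B}^\infty(\X)$, $\pi_1,\pi_2,\pi\in\Lip(\X)$, I would argue from the explicit integral forms of the pushforwards: $S_{\gamma^0,\gamma^1}(f,\pi_1,\pi_2)$ is the integral over $[0,1]^2$ of $(f\circ H)\big(\partial_s(\pi_1\circ H)\,\partial_t(\pi_2\circ H)-\partial_t(\pi_1\circ H)\,\partial_s(\pi_2\circ H)\big)$, and $R_{\gamma^0,\gamma^1}(f,\pi)$ is the analogous one-variable integral along the two endpoint geodesics. Since $(\gamma^0,\gamma^1,s,t)\mapsto H_{\gamma^0,\gamma^1}(s,t)$ is continuous, the difference quotients of $\pi_i\circ H$ in $s$ and $t$, and hence their a.e.\ pointwise limits, are jointly Borel in $(\gamma^0,\gamma^1,s,t)$; the integrand is therefore jointly Borel and uniformly bounded on each set $\{\LIP(\gamma^0)\le L,\ \LIP(\gamma^1)\le L\}$, so Fubini's theorem makes $(\gamma^0,\gamma^1)\mapsto S_{\gamma^0,\gamma^1}(f,\pi_1,\pi_2)$ Borel on each such set, hence on all of $\Lip([0,1];\X)^2$, and the same reasoning applies to $R$. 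I expect the main obstacle to lie in making this last step fully rigorous — in particular verifying that $\curr{[0,1]^2}$, and therefore $S$, is genuinely a metric $2$-current and that its pushforward has the stated integral representation in the needed generality (bounded Borel $f$, not merely Lipschitz $f$) — whereas the earlier steps are routine provided the constants in the Jacobian estimate are tracked with care.
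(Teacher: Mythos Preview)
Your proposal is correct and follows essentially the same approach as the paper: define $H(s,t)=\sigma(\gamma^0_s,\gamma^1_s,t)$, set $S=H_*\curr{[0,1]^2}$, compute $\partial S$ via the boundary of the square, and bound $\Mass(S)$ by estimating the partial metric derivatives of $H$ (the factor-of-$2$ cancellation with $\int_0^1(1-t)\,\d t=\int_0^1 t\,\d t=\tfrac12$ is exactly how the paper obtains the sharp constant). The measurability argument via joint continuity of $(s,t,\gamma^0,\gamma^1)\mapsto H_{\gamma^0,\gamma^1}(s,t)$ and Fubini is also the paper's route; your ``main obstacle'' is not a genuine gap, as the square current is defined by the explicit integral formula and its extension to bounded Borel $f$ is standard.
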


We recall the notation in Section \ref{sec:preliminaries_currents} for $\curr{[a,b]\times \{c\}}$ and $\curr{\{a\}\times [b,c]}$.

\begin{proof} 
Let $\sigma$ denote a conical geodesic bicombing on $\X$ and define $H\colon \Lip([0,1];\X)^2\times [0,1]^2\to\X$,
\begin{equation}\label{eq:homotopy}
        H(s,t):=H_{\gamma^0,\gamma^1}(s,t):=\sigma(\gamma^0_s,\gamma^1_s,t).
\end{equation}
We will obtain the currents $S$ and $R$ from the pushforward under $H$ of $e_1\wedge e_2[0,1]^2$; see \cref{eq:square_current}.
We start by showing that $H$ is Lipschitz, with constant
$\sqrt{2}L$, where
$$L:=\max\{\LIP(\gamma^0),\LIP(\gamma^1),\sfd_\infty(\gamma^0,\gamma^1)\}.$$
   Recall that by definition of conical geodesic bicombing and \cref{eq:homotopy}, we have
     \begin{equation}
     \label{eq:conical_geo_bicombing_}
     \begin{aligned}
         \sfd(H(s,t),H(s',t))& \le (1-t)\sfd(\gamma^0_{s},\gamma^0_{s'})+t\sfd(\gamma^1_s,\gamma^1_{s'}) \\
         \sfd(H(s,t),H(s,t'))&=|t-t'|\sfd(\gamma^0_{s},\gamma^1_{s})
     \end{aligned}
     \end{equation}
     for all $t,t',s,s'\in[0,1]$.
     Hence,
     \begin{equation*}
    \begin{aligned}
        \sfd(H(s,t),H(s',t')) &\le \sfd(H(s,t),H(s',t))+\sfd(H(s',t),H(s',t'))\\ 
        &\stackrel{\eqref{eq:conical_geo_bicombing_}}{\le} (1-t)\sfd(\gamma^0_s,\gamma^0_{s'})+ t\sfd(\gamma^1_{s},\gamma^1_{s'}) + |t-t'|\sfd_\infty(\gamma^0,\gamma^1)\\ 
        & \le L|s-s'| + L|t-t'|\leq \sqrt{2}L \|(s,t)-(s',t')\|_2.
    \end{aligned}
    \end{equation*}
We define $S:=H_*(e_1 \wedge e_2 [0,1]^2) \in \cM_2(\X)$,
where $e_1 \wedge e_2 [0,1]^2 \in \cM_2(\mathbb{R}^2)$ is given by 
    \begin{equation}\label{eq:square_current}
        (e_1 \wedge e_2 [0,1]^2)(f,\pi_1,\pi_2):=\int_{[0,1]^2} f(\partial_1 \pi_1 \partial_2 \pi_2- \partial_2 \pi_1 \partial_1 \pi_2)\,\d \mathcal{L}^2.
    \end{equation}
Testing on smooth functions and then mollifying, we see that
\begin{equation*}
    \partial(e_1 \wedge e_2 [0,1]^2)= \curr{[0,1]\times \{0\}}+ \curr{\{1\}\times [0,1]} - \curr{[0,1]\times \{1\}} - \curr{\{0\}\times [0,1]}.
\end{equation*}
Therefore,
\begin{equation*}
\begin{aligned}
    \partial S &=\partial H_*(e_1 \wedge e_2 [0,1]^2) = H_*\partial (e_1 \wedge e_2 [0,1]^2) \\
    &= H_*\curr{[0,1]\times \{0\}}+ H_*\curr{\{1\}\times [0,1]} - H_*\curr{[0,1]\times \{1\}} - H_*\curr{\{0\}\times [0,1]}\\
    & = \curr{\gamma^0} + \curr{H(1,\cdot)} - \curr{\gamma^1}- \curr{H(0,\cdot)}=: \curr{\gamma^0} - \curr{\gamma^1} -R.
\end{aligned}
\end{equation*}
We now estimate the masses of $R$ and $S$, starting from $R$.
Recall that, for each $s\in[0,1]$, $H(s,\cdot)$ is a geodesic from $\gamma^0_s$ to $\gamma^1_s$. There follows $\Mass{(\curr{H(s,\cdot)})}\leq \sfd(\gamma^0_s,\gamma^1_s)$ and
\begin{equation*}
\Mass{(R)}\leq \sfd(\gamma^0_0,\gamma^1_0)+\sfd(\gamma^0_1,\gamma^1_1).
\end{equation*}
We now move on to $S$.
For each $t\in[0,1]$, the function $s\in[0,1]\mapsto H(s,t)$ is Lipschitz and therefore
admits a metric derivative at $\mathcal{L}^1$-a.e.\ point $s$, which we denote with $|\partial_1H|(s,t)$. We similarly define $|\partial_2H|(s,t)$.
From \cref{eq:conical_geo_bicombing_}
\begin{align*}
|\partial_1H|(s,t)&\leq (1-t)|\dot{\gamma^0_s}|+t|\dot{\gamma^1_s}|, \\
|\partial_2H|(s,t)&=\sfd(\gamma^0_s,\gamma^1_s)\leq \sfd_\infty(\gamma^0,\gamma^1)    
\end{align*}
at $\mathcal{L}^2$-a.e.\ $(s,t)\in[0,1]^2$.
If $B\subset\X$ is Borel and $\pi_1,\pi_2\colon\X\to\R$ $1$-Lipschitz, we have
\begin{equation*}
\begin{aligned}
    |S(\chi_{B},\pi_1,\pi_2)| &\le \int_{[0,1]^2} \chi_{B}\circ H\left|\partial_1 (\pi_1 \circ H) \partial_2 (\pi_2 \circ H) - \partial_2 (\pi_1 \circ H) \partial_1 (\pi_2 \circ H)\right|\,\d \mathcal{L}^2\\
    &\le 2\int_{[0,1]^2} \chi_B\circ H |\partial_1 H| |\partial_2 H|\d\mathcal{L}^2\\
    &\le 2 \sfd_\infty(\gamma^0,\gamma^1)\,
        \int_{[0,1]^2} \chi_B \circ H(s,t)\big((1-t)|\dot{\gamma^0_s}|+t|\dot{\gamma^1_s}|\big)\,\d s\d t.
\end{aligned}
\end{equation*}
Hence, for any Borel partition of $\X$ $\{B_i\}_i$ and $1$-Lipschitz maps $\pi^i_1,\pi^i_2\colon\X\to\R$, it holds
\begin{align*}
\sum_i|S(\chi_{B_i},\pi^i_1,\pi^i_2)|&\leq2\sfd_\infty(\gamma^0,\gamma^1)
\sum_i\int_{[0,1]^2} |\chi_{B_i} \circ H(s,t)|\big((1-t)|\dot{\gamma^0_s}|+t|\dot{\gamma^1_s}|\big)\,\d s\d t \\
&=2\sfd_\infty(\gamma^0,\gamma^1)\int_{[0,1]^2}(1-t)|\dot{\gamma^0_s}|+t|\dot{\gamma^1_s}|\,\d s\d t \\
&=\sfd_\infty(\gamma^0,\gamma^1)\big(\ell(\gamma^0)+\ell(\gamma^1)\big),
\end{align*}
which proves $\Mass(S)\leq \sfd_\infty(\gamma^0,\gamma^1)\big(\ell(\gamma^0)+\ell(\gamma^1)\big)$.
It remains to show measurabily of $R_{\gamma^0,\gamma^1}$ and $S_{\gamma^0,\gamma^1}$.
Conical geodesic bicombings are always continuous \cite{DescombesLang2015}.
Since the evaluation map $(t,\gamma)\in[0,1]\times\Lip([0,1];\X)\mapsto \gamma_t\in\X$ is continuous, it follows that
$(s,t,\gamma^0,\gamma^1)\mapsto \sigma(t,\gamma^0_s,\gamma^1_s)$ is also continuous.
Then, $(s,t,\gamma^0,\gamma^1)\mapsto f\circ H_{\gamma^0,\gamma^1}(s,t)$ and $(s,t,\gamma^0,\gamma^1)\mapsto \partial_i(\pi_j\circ H_{\gamma^0,\gamma^1})(s,t)$
are Borel.
Finally, Fubini's theorem allows us to conclude.
\end{proof}

\begin{remark}[Extension to ${\sf CAT}(\kappa)$ spaces]
    We have already noted that ${\sf CAT}(0)$ spaces admit conical geodesic bicombings. 
    In fact, a statement analogous to Lemma \ref{lemma:homotopy_lemma} holds in ${\sf CAT}(\kappa)$ spaces with $\kappa >0$, provided we further assume
    ${\rm im}(\gamma^i) \subset B(z,\frac{\pi}{2\sqrt{\kappa}})$ for some $z \in \X$ and all $i=0,1$.
    To see this, let $H\colon[0,1]\times[0,1]\to\X$ be such that
    $t\mapsto H(s,t)$ is the (unique) geodesic from $\gamma^0_s$ to $\gamma^1_s$
    and observe that, by convexity of $B(z,\frac{\pi}{2\sqrt{\kappa}})$, 
    we have $H(s,t)\in B(z,\frac{\pi}{2\sqrt{\kappa}})$
    for $t\in [0,1]$.
    Then,
    combining \cite[Lemma 2.1]{CavallucciSambusetti} with the argument of \cite[Proposition II.2.2]{BridsonHaefliger}, we get
    \begin{equation}\label{eq:bicombing_CAT_kappa}
    \sfd(H(s,t),H(s',t)) \le 2(1-t)\sfd(\gamma^0_{s},\gamma^0_{s'})+2t\sfd(\gamma^1_s,\gamma^1_{s'}),\qquad\text{for }t \in [0,1].
    \end{equation}
    One can then repeat the proof of Lemma \ref{lemma:homotopy_lemma}
    with \eqref{eq:bicombing_CAT_kappa} in place of the
    first equation in \eqref{eq:conical_geo_bicombing_}
    to obtain
    \begin{equation*}
        \Flatnorm(\curr{\gamma^0}-\curr{\gamma^1}) \le 2(\ell(\gamma^0)+\ell(\gamma^1)+2)\sfd_\infty(\gamma^0,\gamma^1).
    \end{equation*}
\end{remark}

\begin{remark}
We note that it is necessary to assume some connectivity of the metric space in order to have the continuity of the map
\begin{equation*}
    \Lip([0,1],\X) \ni \gamma \mapsto \curr{\gamma} \in \cM_1(\X)
\end{equation*}
on sets of equibounded length.
Here, the domain, as a subset of $C([0,1],\X)$, is endowed with the uniform topology and the target space with the topology induced by the flat norm. 

Indeed, given $0 < \alpha <1$, consider the Rickman rug, i.e.\ the metric space $(\R^2,\sfd_{\alpha})$, where
\begin{equation*}
    \sfd_\alpha((x,y),(x',y')):=\max\{|x-x'|^{\alpha},|y-y'|\},    
\end{equation*}
where $|\cdot|$ is the Euclidean distance in $\R$. In such a case, two vertical lines can be arbitrarily close, but the flat distance is bounded away from zero. More specifically, let $s >0$ and $T_s := \curr{\{0\} \times [0,1]}- \curr{\{s\}\times [0,1]} $.

Let $S \in \cN_2(\X)$, $R \in \cN_1(\X)$ such that $T_s=\partial S + R$. In particular, $\partial R = \partial T_s = \delta_{(0,1)}-\delta_{(0,0)} + \delta_{(s,0)}-\delta_{(s,1)}=:m$.
We claim that $\|\partial T_s\|_{\AEm(\X,\sfd_\ell)}=2$.
Given $f \in \Lip_1(\X,\sfd_\ell)$, we have that $\max\{|f(0,0)-f(0,1)|,|f(1,0)-f(1,1)|\}\le 1$, thus
\begin{equation*}
    \int f \,\d m \le 2,
\end{equation*}
proving $\|\partial T_s\|_{\AEm(\X,\sfd_\ell)} \le 2$. By choosing $f \in \Lip_1(\X,\sfd_\ell)$ defined as $f(x,y):=y\chi_{\{x\le s/2 \}}-y \chi_{\{x > s/2 \}}$ we get the converse inequality.
The claim, together with Lemma \ref{lemma:isomorphism_and_quasiconvexity} gives $\Mass(R_s) \ge 2$.
This implies $\Flatnorm(T_s) \ge 2$, and since $\Flatnorm(T_s)\le \Mass(T_s) \le 2$, we have $\Flatnorm(T_s)=2$ for every $s \in [0,1]$.
\end{remark}

\begin{lemma}[Measurability of fillings]\label{lemma:measurability_of_fillings}
    Let $(\X,\sfd)$ be a complete and separable metric space admitting
    a conical geodesic bicombing.
    Let $\eta$ be a finite Borel measure on $\Lip([0,1];\X)$, concentrated
    on a Borel set $A\subset\{\gamma\in\Lip([0,1];\X)\colon\ell(\gamma)\leq L\}$.
    Then
    \begin{equation*}
        T(f,\pi):=\int\curr{\gamma}(f,\pi)\,\d\eta(\gamma),\qquad (f,\pi)\in D^1(\X)
    \end{equation*}
    defines a metric $1$-current, which additionally satisfies
    \begin{equation*}
        \Flatnorm(T-\eta(A)\curr{\gamma_0})\leq 2(L+1)\eta(A) \mathrm{diam}(A,\sfd_\infty),
    \end{equation*}
    for every $\gamma_0\in A$.
\end{lemma}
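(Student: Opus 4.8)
The plan is to apply the homotopy Lemma~\ref{lemma:homotopy_lemma} fiberwise over $A$ and integrate against $\eta$. Fix $\gamma_0\in A$. For each $\gamma\in A$, Lemma~\ref{lemma:homotopy_lemma} with $\gamma^0=\gamma$, $\gamma^1=\gamma_0$ produces $S_{\gamma,\gamma_0}\in\cM_2(\X)$ and $R_{\gamma,\gamma_0}\in\cM_1(\X)$ with $\curr{\gamma}-\curr{\gamma_0}=\partial S_{\gamma,\gamma_0}+R_{\gamma,\gamma_0}$, together with $\Mass(S_{\gamma,\gamma_0})\le(\ell(\gamma)+\ell(\gamma_0))\sfd_\infty(\gamma,\gamma_0)\le 2L\,\mathrm{diam}(A,\sfd_\infty)$ and $\Mass(R_{\gamma,\gamma_0})\le 2\sfd_\infty(\gamma,\gamma_0)\le 2\,\mathrm{diam}(A,\sfd_\infty)$ (here we use $\ell(\gamma),\ell(\gamma_0)\le L$ and $\gamma,\gamma_0\in A$); moreover, by the measurability part of that lemma, $\gamma\mapsto S_{\gamma,\gamma_0}(f,\pi_1,\pi_2)$ and $\gamma\mapsto R_{\gamma,\gamma_0}(f,\pi)$ are Borel for every fixed $f\in\mathcal{B}^\infty(\X)$ and $\pi_1,\pi_2,\pi\in\Lip(\X)$.

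First I would check that $T$ is well defined: the fiberwise identity, rewritten as $\curr{\gamma}(f,\pi)=S_{\gamma,\gamma_0}(1,f,\pi)+\curr{\gamma_0}(f,\pi)+R_{\gamma,\gamma_0}(f,\pi)$, exhibits $\gamma\mapsto\curr{\gamma}(f,\pi)$ as a Borel function of $\gamma\in A$ bounded there by $\|f\|_\infty\LIP(\pi)L$, hence $\eta$-integrable, so $T(f,\pi):=\int\curr{\gamma}(f,\pi)\,\d\eta(\gamma)$ is meaningful. Then I set $S(f,\pi_1,\pi_2):=\int S_{\gamma,\gamma_0}(f,\pi_1,\pi_2)\,\d\eta(\gamma)$ and $R(f,\pi):=\int R_{\gamma,\gamma_0}(f,\pi)\,\d\eta(\gamma)$: multilinearity is immediate, joint continuity passes to the integral by dominated convergence (the fiberwise mass bound $|S_{\gamma,\gamma_0}(f,\pi_1^n,\pi_2^n)|\le\LIP(\pi_1^n)\LIP(\pi_2^n)\|f\|_\infty\,2L\,\mathrm{diam}(A,\sfd_\infty)$ gives a bounded, hence $\eta$-integrable, envelope, and likewise for $R$), locality is clear, and the finite-mass condition follows by dominating $\|S_{\gamma,\gamma_0}\|$ and $\|R_{\gamma,\gamma_0}\|$ by the explicit pushforward measures from the proof of Lemma~\ref{lemma:homotopy_lemma} and integrating against $\eta$. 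This yields $S\in\cM_2(\X)$, $R\in\cM_1(\X)$ with $\Mass(S)\le\int\Mass(S_{\gamma,\gamma_0})\,\d\eta(\gamma)\le 2L\,\eta(A)\,\mathrm{diam}(A,\sfd_\infty)$ and $\Mass(R)\le\int\Mass(R_{\gamma,\gamma_0})\,\d\eta(\gamma)\le 2\,\eta(A)\,\mathrm{diam}(A,\sfd_\infty)$.

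Integrating the fiberwise identity against $\eta$ and using $\partial S(f,\pi)=S(1,f,\pi)=\int S_{\gamma,\gamma_0}(1,f,\pi)\,\d\eta(\gamma)$, I obtain $\partial S=T-\eta(A)\curr{\gamma_0}-R$. Since $T$, $\curr{\gamma_0}$ and $R$ have finite mass, so does $\partial S$, hence $S\in\cN_2(\X)$; moreover the identity $T=\eta(A)\curr{\gamma_0}+\partial S+R$ presents $T$ as a sum of metric $1$-currents, so $T\in\cM_1(\X)$. Finally, the decomposition $T-\eta(A)\curr{\gamma_0}=\partial S+R$ with $S\in\cN_2(\X)$ and $R\in\cM_1(\X)$ together with the definition of the flat norm give
\[
\Flatnorm\big(T-\eta(A)\curr{\gamma_0}\big)\le\Mass(S)+\Mass(R)\le 2(L+1)\,\eta(A)\,\mathrm{diam}(A,\sfd_\infty).
\]

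The step I expect to be the real obstacle is the verification that $S$ and $R$ are bona fide metric currents: concretely, that $B\mapsto\int\|S_{\gamma,\gamma_0}\|(B)\,\d\eta(\gamma)$ and $B\mapsto\int\|R_{\gamma,\gamma_0}\|(B)\,\d\eta(\gamma)$ are well-defined finite Borel measures, which amounts to the Borel dependence on $\gamma$ of $\gamma\mapsto\|S_{\gamma,\gamma_0}\|(B)$ and $\gamma\mapsto\|R_{\gamma,\gamma_0}\|(B)$. I would extract this from the explicit mass estimates in the proof of Lemma~\ref{lemma:homotopy_lemma}, whose dominating measures depend on $\gamma$ only through the continuous homotopy $H$ and the Borel map $(\gamma,s)\mapsto|\dot{\gamma}_s|$ (alternatively, from a general superposition principle for metric currents in separable spaces). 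Everything else is a fiberwise use of Lemma~\ref{lemma:homotopy_lemma} together with an exchange of the boundary operator and the integral.
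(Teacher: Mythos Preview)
Your approach is correct and essentially the same as the paper's: apply Lemma~\ref{lemma:homotopy_lemma} fiberwise, integrate against $\eta$, and read off the flat-norm bound from $\Mass(S)+\Mass(R)$. Two simplifications in the paper are worth noting. First, to see that $\gamma\mapsto\curr{\gamma}(f,\pi)$ is Borel on $A$ you do not need the detour through the fiberwise identity: this map is \emph{continuous} on $\Lip([0,1];\X)$ with the $\sfd_\infty$-topology, which is more direct. Second, the ``real obstacle'' you flag---that $S$ and $R$ are genuine metric currents---is packaged in the paper into a general lemma (\cref{lemma:integral_of_currents_are_currents}): once $\gamma\mapsto T_\gamma(f,\pi)$ is measurable for each $(f,\pi)$ and $\int\Mass(T_\gamma)\,\d\eta<\infty$, the integral is a metric current with $\|T\|(B)\le\int\|T_\gamma\|(B)\,\d\eta$. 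The measurability of $\gamma\mapsto\|T_\gamma\|(B)$ needed here is obtained abstractly from weak$^*$ measurability via a countable-supremum description of mass (\cref{lemma:mass_countable_sup,lemma:meas_mass}), so there is no need to go back to the explicit dominating measures in the proof of Lemma~\ref{lemma:homotopy_lemma}.
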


\begin{proof}
To see that $T$ is well-defined, it is enough to show that the assumptions of \cref{lemma:integral_of_currents_are_currents} are met,
with measure space $(A,\mathcal{B}(A),\eta)$ and map $\gamma\in A\mapsto\curr{\gamma}$.
For $(f,\pi)\in D^1(\X)$, the map $\gamma\in A\mapsto \curr{\gamma}(f,\pi)$ is continuous, hence Borel,
and
\begin{equation*}
    \int\Mass(\curr{\gamma})\,\d\eta(\gamma)\leq L\eta(A)<\infty,
\end{equation*}
so $T$ is a metric $1$-current. \par
Assume $\mathrm{diam}(A,\sfd_\infty)<\infty$, for otherwise the statement is trivial.
For each $\gamma\in A$, let $S_\gamma\in\cM_1(\X)$, $R_\gamma\in\cM_1(\X)$
be given by \cref{lemma:homotopy_lemma} with $\gamma^1\equiv\gamma$ and $\gamma^0\equiv\gamma^0$.
They satisfy $\partial S_\gamma+R_\gamma = \curr{\gamma^0}-\curr{\gamma}$,
\begin{equation*}
    \Mass(S_\gamma)\leq 2L\mathrm{diam}(A,\sfd_\infty), \qquad
    \Mass(R_\gamma)\leq 2\mathrm{diam}(A,\sfd_\infty),
\end{equation*}
and moreover the maps $\gamma\in A\mapsto S_\gamma(f,\pi_1,\pi_2)$, $\gamma\in A\mapsto R_\gamma(f,\pi)$
are measurable for $(f,\pi_1,\pi_2)\in D^2(\X)$ and $(f,\pi)\in D^1(\X)$.
We can therefore apply \cref{lemma:integral_of_currents_are_currents} with measure space
$(A,\mathcal{B}(A),\eta)$ to see that
\begin{align*}
S(f,\pi_1,\pi_2)&:=\int_A S_\gamma(f,\pi_1,\pi_2)\,\d\eta(\gamma),\\
R(f,\pi)&:=\int_AR_\gamma(f,\pi)\,\d\eta(\gamma),
\end{align*}
define a metric $2$-current and $1$-current, respectively.
Moreover, they satisfy
$\partial S + R =\eta(A)\curr{\gamma^0}-T$
and
\begin{equation*}
    \Mass(S)+\Mass(R)\leq 2(L+1)\eta(A)\mathrm{diam}(A,\sfd_\infty).
\end{equation*}
This concludes the proof.
\end{proof}

\begin{theorem}[Geodesic approximation in conical geodesic bicombing metric space]
\label{thm:geodesic_approximation}
Let $(\X,\sfd)$ be a complete and separable metric space admitting a conical geodesic bicombing.
Let $N \in \cN_1(\X)$. Then for every $\varepsilon>0$, there exists an $n \in \mathbb{N}$ and $P=\sum_{i=1}^n a_i\curr{\gamma_i}$, where $\gamma_i \in {\rm Geo}(\X)$ and $a_i\in\R$ for every $1 \le i \le n$, such that $\Flatnorm(N-P) \le \varepsilon$ and $\Mass(P)\leq\Mass(N)$.
Moreover, if $\X$ is a closed convex subset of a Banach space, we can choose $\gamma_i$ to be line segments. 
\end{theorem}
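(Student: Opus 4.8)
The plan is to combine the Paolini--Stepanov representation of normal $1$-currents with the homotopy estimate of Lemma~\ref{lemma:homotopy_lemma}, in two stages. First I would reduce $N$, up to an arbitrarily small flat error and \emph{without} increasing the mass, to a finite real combination $\sum_l a_l\curr{\gamma_l}$ of currents induced by Lipschitz curves of uniformly bounded length. Then I would replace each $\gamma_l$ by an inscribed chain of bicombing geodesics, invoking Lemma~\ref{lemma:homotopy_lemma} on a sufficiently fine subdivision to make the resulting flat error as small as desired while the mass only decreases (a geodesic segment is no longer than the arc it replaces).

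\textbf{Stage 1 (finite reduction).} Fix $\delta>0$. By the Paolini--Stepanov representation (see \eqref{eq:normal_decomposition} and \cite{PaoliniStepanov_acyclic,PaoliniStepanov_cycles}) there is a finite Borel measure $\eta$ on the Lipschitz curves $[0,1]\to\X$ with $N=\int\curr{\gamma}\,\d\eta(\gamma)$ and $\Mass(N)=\int\ell(\gamma)\,\d\eta(\gamma)$; after a Borel reparametrization by constant speed (which leaves each $\curr{\gamma}$ unchanged, constant curves contributing nothing) I may assume $\eta$ is carried by constant-speed curves. Since $\int\ell\,\d\eta<\infty$, I first truncate: choose $L$ with $\int_{\{\ell>L\}}\ell\,\d\eta<\delta$ and restrict $\eta$ to $\{\ell\le L\}$, which by Lemma~\ref{lemma:integral_of_currents_are_currents} perturbs $N$ by a current of mass (hence flat norm) $<\delta$ and does not increase the mass. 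As a finite Borel measure on a Polish space $\eta$ is tight, so I further restrict it to a compact $\mathcal{K}\subset\{\ell\le L\}$, again at flat cost $<\delta$ and without increasing the mass. Being compact, $\mathcal{K}$ is totally bounded in $\sfd_\infty$; I partition it into finitely many Borel pieces $A_1,\dots,A_K$ of $\sfd_\infty$-diameter $<\rho$ (with $\rho$ to be chosen), discard those with $\eta(A_l)=0$, and in each remaining $A_l$ pick $\gamma_l$ with $\ell(\gamma_l)\le\inf_{A_l}\ell+\delta/(1+\eta(\mathcal{K}))$. Setting $P_1:=\sum_l\eta(A_l)\curr{\gamma_l}$, Lemma~\ref{lemma:measurability_of_fillings} applied to each $\eta\restr{A_l}$ and summed gives $\Flatnorm\bigl(\int_{\mathcal{K}}\curr{\gamma}\,\d\eta-P_1\bigr)\le 2(L+1)\,\eta(\mathcal{K})\,\rho$, which is $<\delta$ once $\rho$ is small; and since $\eta(A_l)\inf_{A_l}\ell\le\int_{A_l}\ell\,\d\eta$ we obtain $\Mass(P_1)\le\sum_l\eta(A_l)\ell(\gamma_l)\le\int_{\mathcal{K}}\ell\,\d\eta+\delta\le\Mass(N)+\delta$.

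\textbf{Stage 2 (straightening and conclusion).} Fix $m\in\N$. For each $l$ I subdivide $[0,1]$ into $m$ equal intervals, put $x^l_i:=\gamma_l(i/m)$, let $\sigma^l_i:=\sigma(x^l_{i-1},x^l_i,\cdot)$ be the bicombing geodesic, and define $P^{(m)}:=\sum_l\eta(A_l)\sum_{i=1}^m\curr{\sigma^l_i}$, a finite real combination of currents induced by geodesics. Since $\curr{\gamma_l}=\sum_i\curr{\gamma_l\restr{[(i-1)/m,i/m]}}$ and on each piece the arc $\gamma_l\restr{[(i-1)/m,i/m]}$ and the geodesic $\sigma^l_i$ share both endpoints, Lemma~\ref{lemma:homotopy_lemma} (whose remainder term then vanishes) produces $S^{(m)}_l\in\cM_2(\X)$ with $\partial S^{(m)}_l=\curr{\gamma_l}-\sum_i\curr{\sigma^l_i}$; because $\gamma_l$ has constant speed $\ell(\gamma_l)\le L$, each arc and each geodesic in piece $i$ has length at most $\ell(\gamma_l)/m$ and stays within $2\ell(\gamma_l)/m$ of $x^l_{i-1}$ in the uniform distance, so $\Mass(S^{(m)}_l)\le\sum_{i=1}^m(2\ell(\gamma_l)/m)^2=4\ell(\gamma_l)^2/m\le 4L^2/m$. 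Hence $\partial\bigl(\sum_l\eta(A_l)S^{(m)}_l\bigr)=P_1-P^{(m)}$, so $\Flatnorm(P_1-P^{(m)})\le 4L^2\eta(\mathcal{K})/m<\delta$ for $m$ large, while $\Mass(\curr{\sigma^l_i})=\sfd(x^l_{i-1},x^l_i)\le\ell(\gamma_l\restr{[(i-1)/m,i/m]})$ gives $\Mass(P^{(m)})\le\sum_l\eta(A_l)\ell(\gamma_l)=\Mass(P_1)\le\Mass(N)+\delta$. Collecting the estimates, $\Flatnorm(N-P^{(m)})<4\delta$ and $\Mass(P^{(m)})\le\Mass(N)+\delta$; if $\Mass(N)=0$ then $N=0$ and $P=0$ works, otherwise I rescale $P:=\tfrac{\Mass(N)}{\Mass(N)+\delta}P^{(m)}$, still a finite combination of currents induced by geodesics, with $\Mass(P)\le\Mass(N)$ and $\Flatnorm(N-P)\le\Flatnorm(N-P^{(m)})+\bigl(1-\tfrac{\Mass(N)}{\Mass(N)+\delta}\bigr)\Mass(P^{(m)})<5\delta$. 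Taking $\delta:=\varepsilon/5$ proves the theorem, and for the last clause the canonical conical bicombing on a closed convex subset of a Banach space is $\sigma(x,y,t)=(1-t)x+ty$, so each $\sigma^l_i$ is a line segment.

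\textbf{Main obstacle.} The only genuinely new geometric ingredient --- bounding $\Flatnorm(\curr{\gamma^0}-\curr{\gamma^1})$ for nearby curves --- is Lemma~\ref{lemma:homotopy_lemma}, already established, so the remaining work is essentially bookkeeping. The point I expect to require the most care is obtaining the mass bound $\Mass(P)\le\Mass(N)$ \emph{on the nose} rather than up to a multiplicative factor: this is why Stage~1 chooses the representatives $\gamma_l$ of near-minimal length within each partition piece (so the finite superposition does not overshoot $\int\ell\,\d\eta$) and why the final step rescales away the residual $\delta$. A secondary, purely technical point is the measurability needed in Stage~1 to integrate the homotopy fillings against $\eta$, but this is supplied precisely by the measurability clauses of Lemmas~\ref{lemma:homotopy_lemma} and \ref{lemma:measurability_of_fillings}.
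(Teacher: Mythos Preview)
Your proof is correct and follows essentially the same two-stage strategy as the paper: represent $N$ via Paolini--Stepanov, truncate and discretise to a finite combination of curves using Lemma~\ref{lemma:measurability_of_fillings}, then replace each curve by an inscribed chain of bicombing geodesics using Lemma~\ref{lemma:homotopy_lemma}. The only cosmetic difference is how the strict mass bound $\Mass(P)\le\Mass(N)$ is secured: the paper picks each representative $\gamma_i\in A_i$ with length at most the \emph{average} $\eta(A_i)^{-1}\int_{A_i}\ell\,\d\eta$, which yields the bound directly and avoids your final rescaling; and in Stage~2 the paper applies Lemma~\ref{lemma:homotopy_lemma} once to the whole piecewise-geodesic curve $\sigma^\varepsilon$ versus $\gamma$, rather than piece by piece, but your piecewise application (exploiting that $R=0$ when endpoints coincide) is equally valid.
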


\begin{proof}
    \textsc{Step 1} (Approximation by a finite sum of curves with multiplicities).
    We first show that for $\varepsilon>0$ there are Lipschitz curves $\gamma_i\colon[0,1]\to\X$, $a_i\in\R$,
    such that
    \begin{equation*}
        \Flatnorm \left(N-\sum_{i=1}^na_i\curr{\gamma_i}\right)\leq \varepsilon,
        \qquad \sum_{i=1}^n|a_i|\ell(\gamma_i)\leq\Mass(N).
    \end{equation*}
    Since $N \in \cN_1(\X)$, by a result of Paolini-Stepanov \cite[Theorem 3.1]{PaoliniStepanov_cycles}, there exists a finite Borel measure $\eta$ concentrated on 
    rectifiable curves such that
    \begin{equation}
    \label{eq:PS_property}
        N =\int \curr{\gamma}\,\d \eta(\gamma)\quad\text{ and }\quad\Mass(N)=\int \ell(\gamma)\,\d \eta(\gamma).
    \end{equation}
    Let $\mathcal{C}_L:=\{\gamma\in C([0,1];\X)\colon\ell(\gamma)\leq L\}$ and set
    \begin{equation*}
        N_L:=\int_{\mathcal{C}_L}\curr{\gamma}\,\d\eta(\gamma).
    \end{equation*}
    Since $\eta$ is concentrated on rectifiable curves, $\Flatnorm(N-N_L)\leq\Mass(N-N_L)\leq\int_{\mathcal{C}_L^c}\ell(\gamma)\,\d\eta(\gamma)\rightarrow 0$ as $L\rightarrow \infty$.
    We also have $\Mass(N_L)\leq\Mass(N)$. We can therefore assume
    $N=N_L$ for some $L>0$, i.e.\ that $\eta$ is concentrated on
    $\mathcal{C}_L$.\par
    Fix $\varepsilon>0$.    
    Since $(\X,\sfd)$ is separable, so is $C([0,1],\X)$, and there is a countable dense set $\{\tilde{\gamma}_i\}_i$.
    Set $A_i :=\mathcal{C}_L\cap ( B_\varepsilon(\tilde{\gamma}_i)\setminus \cup_{j<i}B_\varepsilon(\tilde{\gamma}_j))$, so that $\{A_i\}_i$ is a pairwise disjoint cover of $\mathcal{C}_L$.
    Set $N^n:=\int_{\cup_{i=1}^nA_i}\curr{\gamma}\,\d\eta(\gamma)$ and note that, similarly as before,
    $\Mass(N^n)\leq\Mass(N)$ and
    $\Mass{(N-N^n)}\rightarrow0$ as $n\rightarrow\infty$.
    We now approximate $N^n$.
    For $1\leq i\leq n$, pick $\gamma_i \in A_i$ such that
    \begin{equation}
        \label{eq:choice_of_gamma}
        \eta(A_i)\ell(\gamma_i) \le \int_{A_i} \ell(\gamma)\,\d \eta(\gamma),
    \end{equation}
     and define $T:=\sum_{i=1}^n \eta(A_i) \curr{ {\gamma_i} }$.
    By \eqref{eq:choice_of_gamma} and \eqref{eq:PS_property}, we have $\Mass(T)\le \Mass(N)$ and,
    by Lemma \ref{lemma:homotopy_lemma},
     \begin{equation*}
         \Flatnorm(N^n-T)\leq\sum_{i=1}^n2(L+1)\eta(A_i)2\varepsilon=4(L+1)\eta(C([0,1],\X))\varepsilon.
     \end{equation*}
     Since $\varepsilon>0$ was arbitrary and $\Mass(N-N^n)\rightarrow 0$, this concludes the first step.
     
    \textsc{Step 2} (Approximation by a finite sum of geodesics)
    By triangle inequality (for the flat-norm), it is enough to show that we can approximate
    each $\curr{\gamma_i}$ by a finite sum of geodesics with total length less than $\ell(\gamma_i)$.\par
    Fix $i$ and set $\gamma:=\gamma_i$.
    For $\varepsilon>0$, let $\mathcal{T}^\varepsilon=\{0=t_0^\varepsilon<\cdots<t_{m(\varepsilon)}^\varepsilon=1\}$
    be a partition of $[0,1]$ with mesh size $\max_{1\leq j\leq m(\varepsilon)}(t_j-t_{j-1})<\varepsilon$.
    We further assume $\mathcal{T}^{\varepsilon_2}\subset\mathcal{T}^{\varepsilon_1}$ for $0<\varepsilon_1<\varepsilon_2$.
    For $1\leq j\leq m(\varepsilon)$ and $t\in[t_{j-1}^\varepsilon,t_j^\varepsilon]$, set
    \begin{equation*}
    \sigma^\varepsilon(t):=\sigma(\gamma_{t_{j-1}^\varepsilon},\gamma_{t_j^\varepsilon},(t-t_{j-1}^\varepsilon)/(t_{j}^\varepsilon-t_{j-1}^\varepsilon)),
    \end{equation*}
    where $\sigma$ is a conical geodesic bicombing on $\X$.
    Then $\sigma^\varepsilon:[0,1]\to\X$ is a continuous piecewise geodesic curve satisfying
    \begin{equation}
    \label{eq:length_approximation}
       \Mass(\curr{\sigma^\varepsilon})\leq \ell(\sigma^\varepsilon)=\sum_{j=1}^{m(\varepsilon)} \sfd(\gamma_{t_{j-1}^\varepsilon},\gamma_{t_{j}^\varepsilon}) \le \ell(\gamma).
    \end{equation}
    The curves $\sigma^\varepsilon$ converge uniformly to $\gamma$ and have equibounded lengths, so \cref{lemma:homotopy_lemma}
    shows that $\Flatnorm_X(\curr{\sigma^\varepsilon}-\curr{\gamma})\rightarrow 0$ as $\varepsilon\rightarrow 0$.
    Since $\curr{\sigma^\varepsilon}$ can be written as a finite sum of geodesics,
    \begin{equation*}
        \curr{\sigma^\varepsilon}=\sum_{j=1}^{m(\varepsilon)}\curr{\sigma^\varepsilon\restr{[t_{j-1}^\varepsilon,t_j^\varepsilon]}},
    \end{equation*}
    this concludes the proof.
\end{proof}

\subsection{Homotopy formula for general \texorpdfstring{$k$}{k} in Banach spaces}

The goal of this section is to prove an homotopy formula for $k$-currents in a Banach space $(\B,\|\cdot\|)$. We introduce some notation. Let $\phi, \psi\in \Lip(\B;\B)$.
Let $h\colon [0,1]\times \B \to \B$ be the affine homotopy between Lipschitz functions defined as $h(t,x):=t\phi(x)+(1-t)\psi(x)$. For brevity, for every $f \colon \B \to \R$, we denote $f_t:=f \circ h(t,\cdot)$ and $\hat \pi_i=(\pi_1,...\pi_{i-1},\pi_{i+1},...\pi_k)$.
Note that for $f\in \Lip(\B)$ we have that for every $x \in \B$ 
\begin{equation} \label{partialhom}
	\left|\frac{\partial (f\circ h)}{\partial t}(t,x)\right|\leq \Lip f(x)\, |\phi(x)-\psi(x)|
\end{equation}
at every point $t$ of differentiability of $f\circ h(\cdot,x)$ and
\begin{equation} \label{liphom}
	\Lip f_t(x)\leq \Lip f(x)\,\max\{\Lip \psi(x),\Lip\phi(x)\}
\end{equation}
for every $x \in \B$ and $t \in [0,1]$.
\par

For $T\in\kcur (\B)$ and $h$ as before, we define the $(k+1)$-metric functional $H_h(T)$ as
\begin{equation*}
    H_h(T)(f,\pi):=\sum_{i=1}^{k+1}(-1)^{i+1}\int_{0}^{1}T\left(f\circ h\frac{\partial(\pi_i\circ h)}{\partial t}, \hat\pi_i\circ h\right)\,\d t.    
\end{equation*}
For simplicity of notation, when there is no ambiguity we denote it by $H(T)$.
A similar construction was used by \cite[Section 10]{AK00} to construct a cone over a metric $k$-current.

\begin{proposition}[Properties of $H(T)$]\label{lemma:homotopyform}
	Let $T\in \kncur(\B)$ have bounded support. Then
	\begin{enumerate}
		\item $\mass(H(T))\leq (k+1) \int \|\phi-\psi\| \max\{\Lip \psi(x),\Lip \phi(x)\}^k \,\d\|T\|$; \label{homotopy:mass}
		\item $H(T)\in \ncur_{k+1}(\mathbb{B})$; \label{homotopy:current}
		\item $\phi_\# T-\psi_\# T=\partial H(T)+H(\partial T)$.\label{homotopy:boundary}
	\end{enumerate} 
\end{proposition}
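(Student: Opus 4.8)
The plan is to establish the three properties in order, deriving (1) and (2) by a direct estimate using the self-improved mass bound of Proposition~\ref{prop:bound_current_*uppergradient}, and (3) by first verifying the identity on smooth-enough tuples where one can integrate by parts in $t$, then bootstrapping to all of $D^k(\B)$ by continuity and the locality/chain-rule calculus for metric currents.

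First I would prove (1). Fix $(f,\pi)\in D^{k+1}(\B)$ with $\Lip\pi_i\le 1$. For each $i$ and each $t$, apply Proposition~\ref{prop:bound_current_*uppergradient} (the pointwise estimate with the pointwise Lipschitz constant) to the $k$-current $T$ acting on $\big(f\circ h\,\tfrac{\partial(\pi_i\circ h)}{\partial t},\hat\pi_i\circ h\big)$; using \eqref{partialhom} for the scalar factor and \eqref{liphom} for the $k$ remaining derivatives, one bounds each summand by
\begin{equation*}
\int_0^1\!\!\int |f\circ h|\,\Lip\pi_i(h(t,x))\,|\phi(x)-\psi(x)|\prod_{j\ne i}\Lip\pi_j(h(t,x))\max\{\Lip\psi(x),\Lip\phi(x)\}^{k-1}\,\d\|T\|(x)\,\d t.
\end{equation*}
Since $\Lip\pi_j\le 1$ and there are $k+1$ summands, summing and using $\|f\circ h\|_\infty\le\|f\|_\infty$ gives the mass bound $\mass(H(T))\le(k+1)\int\|\phi-\psi\|\max\{\Lip\psi,\Lip\phi\}^k\,\d\|T\|$, which is finite because $T$ has bounded support and $\phi,\psi$ are Lipschitz. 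In particular $H(T)$ has finite mass. Multilinearity is clear from the definition, and joint continuity follows because $\pi_i^n\to\pi_i$ pointwise with uniformly bounded Lipschitz constants forces $\pi_i^n\circ h\to\pi_i\circ h$ and $\tfrac{\partial(\pi_i^n\circ h)}{\partial t}\to\tfrac{\partial(\pi_i\circ h)}{\partial t}$ in the appropriate sense (the $t$-derivative is, up to the convex-combination weights, a difference $\pi_i^n\circ\phi-\pi_i^n\circ\psi$, which converges pointwise boundedly), so one may pass to the limit under the $t$-integral by dominated convergence and inside $T$ by its own continuity axiom. Locality is similar: if some $\hat\pi_i$-entry, say $\pi_\ell\circ h$ with $\ell\ne i$, is constant near $\{f\circ h\tfrac{\partial(\pi_i\circ h)}{\partial t}\ne 0\}$ for every relevant $t$, then each integrand vanishes by locality of $T$; a short argument handles the interplay of the neighborhoods in $t$. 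This proves (2).

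For (3), the strategy is to reduce to a case where the homotopy calculus is transparent. When $\phi,\psi$ and all $\pi_i, f$ are (restrictions of) affine or $C^1$ maps on a finite-dimensional subspace — or more robustly, after using the fact that metric currents in a Banach space are determined by their action composed with finite tuples of coordinate functionals — the identity $\phi_\#T-\psi_\#T=\partial H(T)+H(\partial T)$ reduces to the classical Stokes-type homotopy formula: differentiating $\pi_i\circ h$ in $t$ produces exactly the terms that, after the alternating sum and the Leibniz rule for $\d$, telescope into $(\pi_i\circ h)\big|_{t=1}-(\pi_i\circ h)\big|_{t=0}$, i.e.\ $\pi_i\circ\phi-\pi_i\circ\psi$. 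Concretely, one checks that for $(f,\pi)\in D^k(\B)$,
\begin{equation*}
\partial H(T)(f,\pi)+H(\partial T)(f,\pi)=\sum_{i=0}^{k}(-1)^i\!\int_0^1\!\frac{\d}{\d t}T\big(\text{(appropriate rearrangement)}\big)\,\d t=T((f,\pi)\circ h)\big|_{t=0}^{t=1},
\end{equation*}
the last expression being $\phi_\#T(f,\pi)-\psi_\#T(f,\pi)$. The manipulation uses the chain/product rule for metric currents (that $T$ behaves like $f\,\d\pi_1\wedge\dots\wedge\d\pi_k$ under composition with Lipschitz maps, cf.\ \cite[Section 10]{AK00} where the cone construction is carried out by the same computation), together with absolute continuity in $t$ of $t\mapsto T((f\circ h)(\pi_1\circ h)'\cdots)$ so that the fundamental theorem of calculus applies. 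The main obstacle, and where I would spend the most care, is making this telescoping rigorous at the level of \emph{Lipschitz} (not $C^1$) data: one cannot literally differentiate $\pi_i\circ h$ in $t$ pointwise and feed it into $T$ naively, so the clean route is to first prove the identity for $\phi,\psi$ that are piecewise-affine on finite-dimensional subspaces (where everything is classical and the functions $h(t,\cdot)$ stay Lipschitz with controlled constants), and then pass to general Lipschitz $\phi,\psi$ by approximation, using part (1) to control $H(T)$ and $H(\partial T)$ uniformly along the approximation and the continuity axiom of $T$ to pass to the limit on both sides. Since $T$ and $\partial T$ are both normal and have bounded support, such approximations converge in mass, which suffices.
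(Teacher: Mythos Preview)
Your treatment of (\ref{homotopy:mass}) is essentially the paper's own argument: both apply the pointwise Lipschitz mass bound of Proposition~\ref{prop:bound_current_*uppergradient} together with \eqref{partialhom} and \eqref{liphom}. (There is a small slip in your intermediate display: the product $\prod_{j\ne i}$ has $k$ factors, so the exponent on $\max\{\Lip\psi,\Lip\phi\}$ should be $k$, not $k-1$; your stated conclusion is nonetheless correct.)

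For (\ref{homotopy:current}) and (\ref{homotopy:boundary}) the paper simply defers to \cite[Proposition~10.2]{AK00}, observing that the cone-construction argument there carries over verbatim once the mass bound (\ref{homotopy:mass}) is in hand. Your direct verification of the current axioms for (\ref{homotopy:current}) is a reasonable alternative, if somewhat sketchy on locality. For (\ref{homotopy:boundary}), however, your proposed route---establish the identity first for piecewise-affine $\phi,\psi$ on finite-dimensional subspaces and then approximate---is both more involved than necessary and contains a genuine gap: in an infinite-dimensional Banach space there is no general mechanism for approximating a Lipschitz map $\phi\colon\B\to\B$ by piecewise-affine (or $C^1$) maps $\phi^n$ pointwise with uniformly bounded Lipschitz constants, which is what you would need to pass to the limit in $\phi^n_\# T$, $H_{h^n}(T)$ and $H_{h^n}(\partial T)$. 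The computation in \cite{AK00} sidesteps this entirely: the homotopy formula is verified for \emph{arbitrary} Lipschitz $\phi,\psi$ by a direct algebraic manipulation using only the multilinearity, locality and product/chain rules for metric currents, with no approximation of the maps $\phi,\psi$ themselves. That is the argument you should follow for (\ref{homotopy:boundary}).
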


\begin{proof}
	We only need to show (\ref{homotopy:mass}). Once (\ref{homotopy:mass}) is shown, (\ref{homotopy:current}) and (\ref{homotopy:boundary}) are obtained by following the proof of \cite[Proposition 10.2]{AK00} exactly. Let $(f,\pi)\in D^{k+1}(\B)$ with $\LIP(\pi_i)\leq 1$ for $1\leq i\leq k+1$.
	We have
	\begin{align*}
		|H(T)&(f, \pi)|\leq  \sum_{i=1}^{k+1}\int_{0}^{1}\left|T\left(f\circ h\,\frac{\partial(\pi_i\circ h)}{\partial t}, \hat\pi_i\circ h\right)\right|\,\d t\\
            & \leq \sum_{i=1}^{k+1}\int_{0}^{1}\int \left|f\circ h\,\frac{\partial(\pi_i\circ h)}{\partial t}\right|(x) \prod_{j\neq i} \Lip(\pi_{j}\circ h)(x)\, \d \|T\|(x)\,\d t \\
		&\stackrel{\eqref{liphom}}{\leq} \sum_{i=1}^{k+1}\int_{0}^{1}\int \left| f\circ h\,\frac{\partial(\pi_i\circ h)}{\partial t}\right|(x)\prod_{j \neq i} \Lip \pi_j (x) (\max\{\Lip\psi(x),\Lip\phi(x)\})^k\,\d\|T\|(x)\,\d t\\
		&\stackrel{\eqref{partialhom}}{\leq} (k+1)  \int_0^1 \int_\mathbb{B} (|f|\circ h)(t,x)\,\|\phi(x)-\psi(x)\| (\max\{\Lip\psi(x),\Lip\phi(x)\})^k \,\d\|T\|(x)\,\d t \\
		&\leq (k+1)\int_\mathbb{B} |f|\,\d\nu,
    \end{align*}
where the measure $\nu$ is defined as
    \[\nu(A)=\int_0^1 \int_{h_t^{-1}(A)}\|\phi(x)-\psi(x)\| (\max\{\Lip \psi(x),\Lip  \phi(x)\})^k \,\d\|T\|(x)\,\d t\]
for all Borel sets $A \subset \B$.

    % 	\[\nu:=\int_0^1 {h_t}_*(\|\phi(x)-\psi(x)\| (\sup\{\Lip \psi(x),\Lip  \phi(x)\})^k \,\|T\|(x))\,\d t.\]
This proves $\|H(T)\| \le (k+1)\nu$ as measures,
and in particular
\begin{equation*}
 \Mass(H(T))\le (k+1)\nu(\B)\le (k+1) \int \|\phi(x)-\psi(x)\| (\max\{\Lip \psi(x),\Lip \phi(x)\})^k \d\|T\|(x).
\end{equation*}
\end{proof}

\section{Structure of flat chains in Banach spaces}
\label{sec:alberti_marchese}

The aim of this section is to extend the results of \cite[Section 3]{AlbertiMarcheseFlatChains} to the case of an ambient Banach space instead of Euclidean space for the case of $1$-dimensional metric currents.
The goal is essentially to show that any $1$-current can be obtained by restricting a normal $1$-current to a Borel set.
Building upon the work carried out in the previous sections, we can essentially follow the argument of their paper. There is a minor modification: in a special case, we can even prove that a metric $1$-current is the restriction of a normal $1$-current to a \emph{closed} set, as opposed to Borel. This will be important for the application in Section \ref{sec:representation_results}
as it will allow us to consider restrictions of fragments to closed sets,
see \eqref{eq:application_albmarchese}. The difficulty is that a fragment, by definition, needs to have a compact domain, thus a fragment restricted to a Borel set need not be a fragment itself.

Throughout this section, $(\B,\|\cdot\|)$ is a Banach space. Given a set $A \subset \B$, we denote by ${\rm int} (A)$ its topological interior. Given $A\subset B \subset \B$, we denote by ${\rm int}(A , B)$ the topological interior of $A$ in $B$ equipped with the topology inherited from $\B$.

We need to fix some notation concerning polyhedral chains and the flat norm. Let $0 < k < \dim \B$ and let $\sigma$ be a $k$-simplex. In particular $\sigma$ is the convex hull of $(k+1)$-points $a_i$ for $i=0,\dots,k$. Let $S_\sigma:= {\rm span}\{a_i-a_0:\, i=1,\dots,k\}=:S_\sigma$. Given a $k$-simplex $\sigma$, $v_i \in S_\sigma$ for $i=1,\dots,k$, we define the metric $k$-current $\curr{\sigma, v_1,\dots,v_k}$ as
\begin{equation*}
    \curr{\sigma, v_1,\dots,v_k}(f,\pi_1,\dots,\pi_k):=\int_\sigma f \la \d \pi_1 \wedge \dots \wedge \d \pi_k, v_1\wedge \dots \wedge v_k \ra\,\d \mathcal{H}^k.
\end{equation*}
Here, by $\d \pi_i$ we denote the differential of the Lipschitz function $\pi_i$ restricted to $S_\sigma + a$, where $a$ is a vertex of $\sigma$. This is well-defined at $\mathcal{H}^k$ a.e.\ point of $\sigma$ because of Rademacher's theorem.

We define the set of \emph{polyhedral $k$-chains} supported on a set $C \subset \mathbb{B}$, denoted by $\cP_k(C)$, as the set of finite sums 
\begin{equation*}
    P=\sum_{i=1}^N a_i\, \curr{\sigma_i,v_1^i,\dots,v_k^i}
\end{equation*}
where $\sigma_i$ is a $k$-simplex, $v^i_j\in S_{\sigma_i}$ for $i=1,\dots,N$, $j=1,\dots,k$ and $a_i \in \R$ and the intersection $\sigma_{i_{0}}\cap \sigma_{i_1}$ is a subset of the boundary of $\sigma_{i_0}$ whenever $i_0\not= i_1$.

If we highlight the dependence on the norm in the definition of Hausdorff measure, we write $\mathcal{H}_{\|\cdot\|}^k$.
We define the space of flat $k$-chains and we denote by $(\cF_{k}(C),\Flatnorm(\cdot))$ as the completion of $\cP_{k}(C)$ with respect to $\Flatnorm(\cdot)$. \par

We recall that elements of $\cF_k(C)$ can be regarded as metric $k$-currents.
This follows by the fact that the flat and mass closures of normal $k$-currents coincide together with the fact that polyhedral $k$-chains are normal $k$-currents. We define the \emph{relaxed mass} norm on the space $\cF_k(C)$ as 
\begin{equation*}
    \relMass(T):=\inf \left\{\liminf_{n \to \infty}\Mass(P_n)\,:\,P_n \in \cP_k(C),\,\lim_{n\to \infty}\Flatnorm(P_n-T) = 0  \right\} 
\end{equation*}
for every $T \in \cF_k(C)$. The main result follows. 

\begin{theorem}
    \label{theorem:main_alberti_marchese}
    Let $C \subset \B$ be closed, convex and bounded with $0\in{\rm int}(C)$. Assume $\dim \B >1$.
    Let $T \in \cM_{1}(C)$ and suppose $\|T\|$ is inner regular by compact sets.
    Then, for any $\varepsilon>0$, there exists $N \in \cN_{1}(C)$ and a Borel set $B\subset \B$ such that
    \begin{itemize}
        \item[(i)] $\partial N=0$,
        \item[(ii)] $\Mass(N)\leq (2+\varepsilon)\Mass(T)$,
        \item[(iii)] $N\restr{B} = T$.
    \end{itemize}

    Moreover, if the support of $T$ is contained in an affine hyperplane, then we can take $B={\rm supp}(T)$.
\end{theorem}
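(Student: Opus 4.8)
Throughout write $V:=N-T$; then (i)--(iii) amount to the existence of $V\in\cM_1(C)$ with $\partial V=-\partial T$, with $\|V\|$ and $\|T\|$ mutually singular, and with $\Mass(V)\le(1+\varepsilon)\Mass(T)$. Indeed $N:=T+V$ is then a cycle, $\Mass(N)=\Mass(T)+\Mass(V)\le(2+\varepsilon)\Mass(T)$ by mutual singularity, and $N\restr B=T$ for any Borel $B$ carrying $\|T\|$ on which $\|V\|$ vanishes; the ``moreover'' asks in addition that $\|V\|({\rm supp}(T))=0$, so that $B={\rm supp}(T)$ is admissible. We follow the scheme of \cite[Section 3]{AlbertiMarcheseFlatChains}, replacing its two Euclidean inputs --- Federer's strict polyhedral approximation and the inclusion of normal currents in the flat closure --- by Theorem \ref{thm:geodesic_approximation} (valid because $C$, being convex, is geodesic with the segments as geodesics) and Corollary \ref{coro:approximation_by_normal_currents}. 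First I would reduce to a separable ambient space: as $\|T\|$ is inner regular by compact sets it is concentrated on a $\sigma$-compact set $S$, and one may replace $\B$ by a separable closed subspace containing $S\cup\{0\}$ and $C$ by its intersection with that subspace (still with $0$ in its interior), invoking Proposition \ref{prop:isometry_of_currents_on_C}. Then a diagonal argument using Corollary \ref{coro:approximation_by_normal_currents} and Theorem \ref{thm:geodesic_approximation} yields polyhedral $1$-chains $P_j\in\cP_1(C)$ with $\Flatnorm(P_j-T)\to0$ and $\limsup_j\Mass(P_j)\le\Mass(T)$; in particular $T\in\cF_1(C)$ and $\relMass(T)\le\Mass(T)$.

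The model case is $T=P\in\cP_1(C)$ polyhedral, and it carries the only genuinely new idea over the Euclidean argument. Here $\partial P$ is a molecule, and since $C$ is geodesic, $\|\partial P\|_{\AEm(C)}=\|\partial P\|_{\AEm(C,\sfd_\ell)}\le\Mass(P)$; fix a near-optimal representation $\partial P=\sum_l b_l(\delta_{q_l}-\delta_{q_l'})$ with $q_l,q_l'\in C$ and $\sum_l|b_l|\,\|q_l-q_l'\|\le(1+\varepsilon)\Mass(P)$. I would join $q_l'$ to $q_l$ not by the segment $[q_l',q_l]$ but by the broken geodesic $\gamma_l$ through $m_l+\delta w_l$, where $m_l$ is the midpoint of $[q_l',q_l]$: since $0\in{\rm int}(C)$, from every point of $C$ there is a solid cone of directions along which one stays in $C$, so for $\delta$ small and $w_l$ in that cone at $m_l$ both sub-segments lie in $C$ and $\ell(\gamma_l)\le\|q_l-q_l'\|+2\delta\|w_l\|$; and because $\dim\B>1$ one may choose $\delta$ and the $w_l$ so that the arcs ${\rm im}(\gamma_l)$ meet one another and ${\rm supp}(P)$ only finitely often. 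Orienting $\gamma_l$ from $q_l'$ to $q_l$ and setting $V:=-\sum_l b_l\curr{\gamma_l}$, we get $\partial V=-\partial P$ and $\Mass(V)\le(1+\varepsilon)\Mass(P)+2\sum_l|b_l|\,\delta\|w_l\|\le(1+2\varepsilon)\Mass(P)$, while $\|V\|$ is carried by the $1$-dimensional set $\bigcup_l{\rm im}(\gamma_l)$, which meets the carrier ${\rm supp}(P)$ of $\|P\|$ (and $\|P\|\ll\mathcal H^1$ there) in a finite set; hence $\|V\|\perp\|P\|$.

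For general $T$ one approximates by the $P_j$ and, following \cite[Section 3]{AlbertiMarcheseFlatChains}, assembles the desired $V$ out of the polyhedral fillings above together with fillings of the flat errors $P_{j+1}-P_j$ (whose flat norms, hence the masses of a filling plus a residual chain, are summable along a subsequence), arranging that the mutually singular splitting $\|N\|=\|T\|+\|V\|$ persists in the limit. I expect this limiting bookkeeping --- in particular retaining the constant $1+\varepsilon$ while simultaneously preserving mutual singularity --- to be the principal technical obstacle; by contrast the Banach-space novelty is confined to steering every auxiliary segment back into $C$, which the feasible-cone device above does uniformly.

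Finally, suppose ${\rm supp}(T)$ lies in an affine hyperplane $\Pi$. I would run the whole construction inside the convex body $\Pi\cap C$ of the Banach space $\Pi$, so that all the $P_j$ --- hence the carrier of $T$ --- stay in $\Pi$, while each bending direction $w_l$ and each direction entering the error fillings is in addition chosen transverse to $\Pi$: this is possible because the feasible cone at each relevant point of $C$ is solid, hence not contained in the proper subspace directing $\Pi$. Then $V$ is a countable superposition of arcs each meeting $\Pi$ only at its two endpoints, so $\|V\|(\Pi)=0$; since $\|N\|\restr\Pi=\|T\|$ and $\|T\|$ is concentrated on ${\rm supp}(T)\subseteq\Pi$, we obtain $\|V\|({\rm supp}(T))=0$ and may therefore take $B={\rm supp}(T)$.
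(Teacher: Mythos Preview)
Your reformulation in terms of $V=N-T$ with $\partial V=-\partial T$, $\|V\|\perp\|T\|$, $\Mass(V)\le(1+\varepsilon)\Mass(T)$ is exactly right, and your reduction to the separable case and the identification $\relMass(T)=\Mass(T)$ via Corollary \ref{coro:approximation_by_normal_currents} and Theorem \ref{thm:geodesic_approximation} match the paper.

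The gap is in the passage from polyhedral to general $T$, and it stems from the fact that your model-case construction solves a different sub-problem from the one the iteration actually needs. Your broken-geodesic filling of $\partial P$ yields a $V$ with $\|V\|\perp\|P\|$. The paper (following Alberti--Marchese) instead proves a lemma of a different shape: given any flat $1$-chain $T$, any finite measure $\mu$, and the hyperplane $H$, one produces a polyhedral $P$ with $\Flatnorm(T-P)$ small, $\Mass(P)\le(1+\varepsilon)\relMass(T)$, and $\|P\|\perp\mu$, $\|P\|(H)=0$. The device is \emph{translation}: the family $\{P+tw\}_{t}$ has mutually singular mass measures, so all but countably many $t$ give $\|P+tw\|\perp\mu$ and $\|P+tw\|(H)=0$. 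It is singularity to a \emph{prescribed} $\mu=\|T\|$ --- not to $\|P\|$ --- that makes the iteration go: approximate $T$ by $P_0$ singular to $\|T\|$, write $T-P_0=\partial S_0+R_0$, approximate $R_0$ by $P_1$ again singular to $\|T\|$, continue, and set $R=\sum_iP_i$; this series converges in mass, so $\|R\|\le\sum_i\|P_i\|\perp\|T\|$, and $\partial R=\partial T$ with $\Mass(R)\le(1+\varepsilon)\Mass(T)$ automatically.

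Your proposed assembly --- apply the model case to each $P_j$ in a flat-Cauchy sequence and ``fill the errors $P_{j+1}-P_j$'' --- does not reproduce this. The $V_j$ you build are singular to $\|P_j\|$, not to $\|T\|$; the carriers of $\|P_j\|$ need have nothing to do with where $\|T\|$ lives, and mutual singularity is not stable under mass limits, let alone the flat limits your scheme involves. You correctly flag this limiting bookkeeping as the main obstacle; the paper's point is that the translation trick removes it entirely, because singularity to $\|T\|$ is enforced at every step before summing. (Your bending construction could be upgraded to do the same --- the arcs through $m_l+\delta w_l$ for varying $\delta$ are mutually singular, so a generic $\delta$ gives $\|V\|\perp\mu$ for any prescribed $\mu$ --- but you would still need to recast the model case as a flat-approximation-plus-singularity lemma and iterate on remainders, not on a Cauchy sequence.) For the hyperplane clause, your transverse-bending idea is in the same spirit as the paper's, which simply chooses the translation direction $w$ with $w+p\notin H$ and then a generic $t$ so that $\|P+tw\|(H)=0$; this is absorbed into the same translation lemma.
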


Recall that, by definition, an affine hyperplane in a Banach space is a level set of a non-zero linear continuous functional.

Following \cite{AlbertiMarcheseFlatChains}, we prove the following preliminary results, which we prove in slightly higher generality than what
is strictly necessary for our purposes.

\begin{lemma}
\label{lemma:rescalement_in_the_interior}
Let $C \subset \B$ be closed, convex and bounded and such that $0\in{\rm int}(C)$.
    Let $T \in \cF_k(C)$ for some $0<k< \dim \B$, with $k \in \N$. Then, for every $\varepsilon>0$, there exists $P \in \cP_{k}(C)$ such that $\Flatnorm(T-P) \le \varepsilon$,
    ${\rm supp} (P) \subset {\rm int}(C)$ and $\Mass(P) \le (1+\varepsilon) \relMass(T)$.
\end{lemma}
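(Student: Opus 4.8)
\emph{Proof plan.} Following \cite{AlbertiMarcheseFlatChains}, the idea is to first approximate $T$ in flat norm by a polyhedral chain whose mass is within a factor $1+\varepsilon$ of $\relMass(T)$, and then contract that chain towards the origin by the homothety $\mu_\lambda(x):=\lambda x$ with ratio $\lambda$ slightly below $1$. Because $C$ is convex with $0\in{\rm int}(C)$, such a contraction pushes the support into ${\rm int}(C)$; because $\LIP(\mu_\lambda)=\lambda\le 1$ it does not increase the mass; and, by the homotopy formula of Proposition~\ref{lemma:homotopyform}, it displaces the chain by an amount that vanishes in flat norm as $\lambda\to1^-$.

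If $\relMass(T)=0$ we are already done with $P:=0$: choosing $P_n\in\cP_k(C)$ with $\Flatnorm(P_n-T)\to0$ and $\Mass(P_n)\to0$, continuity of the flat norm together with $\Flatnorm(P_n)\le\Mass(P_n)$ gives $\Flatnorm(T)=0\le\varepsilon$, while ${\rm supp}(0)=\varnothing\subset{\rm int}(C)$ and $\Mass(0)=0$. So assume $\relMass(T)>0$, fix $\varepsilon>0$, and — directly from the definition of $\relMass$ — choose $P_0\in\cP_k(C)$ with $\Flatnorm(T-P_0)\le\varepsilon/2$ and $\Mass(P_0)\le(1+\varepsilon)\relMass(T)$ (when $\relMass(T)=+\infty$ the second condition is void and one only uses flat density of $\cP_k(C)$ in $\cF_k(C)$). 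For $\lambda\in(0,1)$ put $P_\lambda:=(\mu_\lambda)_\# P_0$. As $\mu_\lambda$ is linear, applying it to each simplex of $P_0$ exhibits $P_\lambda$ again as a polyhedral $k$-chain (the face-incidence condition is preserved and $v_j^i\in S_{\sigma_i}=S_{\mu_\lambda\sigma_i}$). The elementary convexity fact that a ball $B(0,r)\subset C$ forces $B(\lambda x,(1-\lambda)r)\subset C$ for every $x\in C$ gives $\lambda C\subset{\rm int}(C)$, hence ${\rm supp}(P_\lambda)\subset\mu_\lambda({\rm supp}(P_0))\subset\lambda C\subset{\rm int}(C)$, so in particular $P_\lambda\in\cP_k(C)$; and $\Mass(P_\lambda)\le\LIP(\mu_\lambda)^k\Mass(P_0)\le\Mass(P_0)\le(1+\varepsilon)\relMass(T)$ by the standard pushforward estimate for the mass.

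It remains to prove $\Flatnorm(P_\lambda-P_0)\to0$ as $\lambda\to1^-$; once this is known, choosing $\lambda$ close enough to $1$ yields $\Flatnorm(T-P_\lambda)\le\Flatnorm(T-P_0)+\Flatnorm(P_0-P_\lambda)\le\varepsilon$, so $P:=P_\lambda$ satisfies all requirements. Apply Proposition~\ref{lemma:homotopyform} with $\phi=\mu_\lambda$, $\psi=\Id_{\B}$ (both Lipschitz $\B\to\B$, and $P_0$ normal with bounded support): it gives $P_\lambda-P_0=\partial H(P_0)+H(\partial P_0)$ with $H(P_0)\in\cN_{k+1}(\B)$ and $H(\partial P_0)\in\cN_k(\B)$. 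Since $\|\mu_\lambda(x)-x\|=(1-\lambda)\|x\|$, since the pointwise Lipschitz constants of $\mu_\lambda$ and $\Id_{\B}$ are at most $1$, and since $\|x\|\le R:=\sup_{y\in C}\|y\|<\infty$ for every $x\in{\rm supp}(P_0)\cup{\rm supp}(\partial P_0)\subset C$ (as $C$ is bounded), part~(1) of the proposition yields
\[
\Flatnorm(P_\lambda-P_0)\le\Mass(H(P_0))+\Mass(H(\partial P_0))\le(1-\lambda)R\big((k+1)\Mass(P_0)+k\Mass(\partial P_0)\big),
\]
which tends to $0$ since $\Mass(\partial P_0)<\infty$ ($P_0$ being polyhedral). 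For $k=1$, $\partial P_0$ is a finite signed combination of point masses and this last estimate is immediate.

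The only mildly delicate points are checking that the contracted chain is still a polyhedral chain supported in $C$ — which boils down to the convexity fact $\lambda C\subset{\rm int}(C)$ — and extracting the flat-norm continuity of $\lambda\mapsto(\mu_\lambda)_\# P_0$ from the homotopy formula; boundedness of $C$ is exactly what makes that displacement error $O(1-\lambda)$.
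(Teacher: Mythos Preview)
Your argument is correct and follows essentially the same route as the paper: approximate $T$ by a polyhedral chain $P_0$ using the definition of $\relMass$, then push it into ${\rm int}(C)$ via the homothety $x\mapsto\lambda x$ and control the flat-norm displacement with the homotopy formula of Proposition~\ref{lemma:homotopyform}. Your write-up is slightly more careful (handling the degenerate cases $\relMass(T)\in\{0,+\infty\}$, spelling out why $\lambda C\subset{\rm int}(C)$ and why the rescaled chain is still polyhedral), but the method is identical.
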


\begin{proof}
    For every $\varepsilon >0$, there exists a polyhedral chain $P' \in \Poly{k}(C)$ such that $\Flatnorm(T-P') \le \frac{\varepsilon}{2}$ and such that $\Mass(P') \le (1+\frac{\varepsilon}{2}) \relMass(T)$. Let $h(t,x):=t x$ and denote $h_t(\cdot)=h(t,\cdot)$. Define $P:= {h_{1-\eta}}_*P'$ for $\eta$ to be chosen later and we claim that $P$ satisfies the conclusion of the theorem. Notice that
    $P \in \cP_{k}(C)$.
    We apply Proposition \ref{lemma:homotopyform} with $\psi:={\rm Id}$ and $\phi:=h_{1-\eta}$, thus having 
    \begin{equation*}
        \Flatnorm(P-{h_{1-\eta}}_*P) \le \eta ((k+1)\Mass(P)+k \Mass(\partial P)).
    \end{equation*}

    Moreover, we claim that ${\rm supp}(P) \subset {\rm int}(C)$. Indeed, since $0 \in {\rm int}(C)$, we have $B(0,\delta) \subset C$ for some $\delta$. Since $C$ is convex, we have that $(1-t)B(0,\delta) +t {\rm supp} (P') \subset C$ for every $t \in [0,1]$, implying that $t {\rm supp} (P') \in {\rm int}(C)$. We conclude noticing that ${\rm supp}(P)={\rm supp}({h_{1-\eta}}_* P') = (1-\eta) {\rm supp}(P')$.
    
    Moreover, since $h_t \colon \B \to \B$ is $t$-Lipschitz for every $t \in [0,1]$, we have that 
    \begin{equation*}
        \Mass{({h_t}_* P')} \le t^k \Mass{(P')}.
    \end{equation*}
    Thus, the proof follows by choosing $\eta$ sufficiently small.
\end{proof}

\begin{lemma}
    \label{lemma:constructu_current_orthogonal_to_mu}
    %There exists a constant $C_0 > 0$ such that the following holds. 
   Let $C \subset \B$ be closed, convex and bounded and such that $0\in{\rm int}(C)$.
   Let $T \in \cF_{k}(C)$ for some $0<k< \dim \B$, with $k \in \N$ and let $\mu$ be a Radon measure. Let $H \subset \B$ be an affine hyperplane. For every $\varepsilon >0$ there exists $P \in \cP_{k}(C)$ such that $\mu \perp \|P\|$, $\Flatnorm(P-T) \le \varepsilon$, $\Mass(P) \le (1+\varepsilon) \relMass(T)$ and $\|P\|(H)=0$.
\end{lemma}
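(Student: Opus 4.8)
The plan is to carry out the Alberti--Marchese translation argument directly in the Banach space $\B$. Roughly: first replace $T$ by a polyhedral chain whose support lies in ${\rm int}(C)$, then translate that chain by a small generic vector so that the finitely many affine $k$-planes carrying its mass become \emph{simultaneously} $\mu$-null and transverse to $H$; the flat-norm cost of the translation is controlled by the homotopy estimate of Proposition~\ref{lemma:homotopyform}. So I would first apply Lemma~\ref{lemma:rescalement_in_the_interior} to obtain $P'\in\cP_k(C)$ with $\supp(P')\subset{\rm int}(C)$, $\Flatnorm(T-P')\le\varepsilon/2$ and $\Mass(P')\le(1+\varepsilon/2)\relMass(T)$. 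Writing $P'=\sum_{i=1}^N a_i\curr{\sigma_i,v^i_1,\dots,v^i_k}$ and discarding the degenerate simplices (on which the corresponding summand vanishes), I may assume $\dim S_{\sigma_i}=k$ for every $i$; let $W_1,\dots,W_m$ be the finitely many affine hulls of the $\sigma_i$, with linear direction spaces $W_1^0,\dots,W_m^0$, each of dimension $k$. Since each $\|\curr{\sigma_i,v^i_1,\dots,v^i_k}\|$ is controlled by a constant multiple of $\mathcal{H}^k\restr{\sigma_i}$, we have $\|P'\|\ll\mathcal{H}^k\restr{\bigcup_j W_j}$, and the analogous absolute continuity persists after any translation of $P'$.

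The key step is the choice of the translation direction. Write $H=\{\lambda=c\}$ with $\lambda\in\B^*\setminus\{0\}$ and $H_0:=\ker\lambda$. Since $k<\dim\B$, each of $W_1^0,\dots,W_m^0,H_0$ is a proper linear subspace of $\B$, and a vector space over $\R$ is never a finite union of proper subspaces, so there is $e\in\B$ with $\|e\|=1$, $e\notin W_j^0$ for all $j$, and $\lambda(e)\ne0$. For $t\in\R$ put $g_t(x):=x+te$, so that $(g_t)_{\#}P'\in\cP_k(\B)$ has mass measure concentrated on $\bigcup_j(W_j+te)$. The choice of $e$ buys two facts. First, because $e\notin W_j^0$, the translates $\{W_j+te\}_{t\in\R}$ are pairwise disjoint for each fixed $j$, so -- $\mu$ being $\sigma$-finite (one may reduce to this by restricting $\mu$ to a bounded neighbourhood of $C$, which does not affect the singularity conclusion since $\|P\|$ will be supported near $C$) -- only countably many $t\in\R$ satisfy $\mu(W_j+te)>0$ for some $j$. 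Second, because $\lambda(e)\ne0$: for each $j$ with $W_j^0\subset H_0$ there is exactly one $t$ with $W_j+te\subset H$, while if $W_j^0\not\subset H_0$ then $W_j+te$ is contained in $H$ for no $t$; hence only finitely many $t$ are excluded on this count, and for every other $t$ the intersection $(W_j+te)\cap H$ is an affine subspace of dimension $\le k-1$, thus $\mathcal{H}^k$-negligible.

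To control the flat norm, apply Proposition~\ref{lemma:homotopyform} with $\psi=\Id$ and $\phi=g_t$ (both $1$-Lipschitz, with $\|\phi-\psi\|\equiv|t|$), which gives
\[
   \Flatnorm\bigl((g_t)_{\#}P'-P'\bigr)\le|t|\bigl((k+1)\Mass(P')+k\Mass(\partial P')\bigr).
\]
It then suffices to fix $t_0\ne0$ with $|t_0|$ so small that $|t_0|<{\rm dist}(\supp(P'),\B\setminus C)$ -- so that $P:=(g_{t_0})_{\#}P'$ is still a polyhedral chain supported in $C$, i.e.\ $P\in\cP_k(C)$ -- and that the right-hand side above is $\le\varepsilon/2$, while moreover keeping $t_0$ outside the countable and the finite exceptional sets from the previous paragraph. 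One then checks directly for this $P$: $\Mass(P)=\Mass(P')\le(1+\varepsilon)\relMass(T)$ since $g_{t_0}$ is an isometry; $\Flatnorm(P-T)\le\Flatnorm(P-P')+\Flatnorm(P'-T)\le\varepsilon$; $\|P\|$ is concentrated on $\bigcup_j(W_j+t_0e)$, which is $\mu$-null, so $\mu\perp\|P\|$; and $\|P\|(H)=0$ because $\|P\|\ll\mathcal{H}^k\restr{\bigcup_j(W_j+t_0e)}$ while $H$ meets $\bigcup_j(W_j+t_0e)$ in an $\mathcal{H}^k$-null set. I expect the only genuinely delicate point to be the existence of a direction $e$ avoiding all the $W_j^0$ \emph{and} $H_0$ at once -- which is exactly where the hypothesis $k<\dim\B$ enters -- together with the minor reduction of $\mu$ to a $\sigma$-finite measure; the remaining verifications are routine bookkeeping.
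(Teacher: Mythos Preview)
Your proposal is correct and follows essentially the same approach as the paper's proof: both start from Lemma~\ref{lemma:rescalement_in_the_interior}, pick a unit direction outside the finitely many simplex direction spaces and outside $\ker\lambda$, translate by a small multiple of that direction, control the flat norm via Proposition~\ref{lemma:homotopyform}, and use disjointness of the translated $k$-planes to exclude only countably many parameters. Your write-up is in fact more explicit than the paper's (which outsources the singularity step to \cite[Corollary~3.2]{AlbertiMarcheseFlatChains} and is terse on why $\|P_t\|(H)=0$ for all but finitely many $t$), but the underlying argument is identical.
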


\begin{proof}
    We can assume without loss of generality that $P \neq 0$. 
    We apply Lemma \ref{lemma:rescalement_in_the_interior} and we have there exists $P' \in \cP_{k}(C)$ such that $\Flatnorm(T-P') \le \frac{\varepsilon}{2}$,
    ${\rm supp} (P') \subset {\rm int}(C)$ and $\Mass(P') \le (1+\varepsilon) \relMass(T)$.
    
    Since ${\rm supp}(P')$ is compact and ${\rm int}(C)$ open, there is $t_0>0$
    such that for $v\in\B$ with $\|v\|\leq 1$, we have
    \begin{equation*}
        {\rm supp}(P')+t v \subset {\rm int}(C),\qquad \text{for every }t \in [0,t_0].
    \end{equation*}

    \begin{comment}
    Let $S$ be a finite dimensional vector space of dimension $m:=\dim S > k$ and {\color{blue}$v \in \B$ such that $v-p\notin H$}
    \begin{equation*}
        {\rm span}\{{\rm supp} (P'){\color{blue}\cup \{v-p\}}\} \subset S.
    \end{equation*}
    It is possible to do so since $\dim \B > k$.
    {\color{blue} We may moreover assume $H\cap S$ the be an affine hyperplane in $S$.}
    Notice that ${\rm supp}(P') \subset {\rm int}(C)\cap S\subset {\rm int}(S \cap C, S)$. This implies that there exists $t_0 >0$ such that for every $v \in \mathbb{S}^{m-1}$, we have
    \begin{equation*}
        {\rm supp}(P')+t v \subset {\rm int}(C),\qquad \text{for every }t \in [0,t_0].
    \end{equation*}
    \end{comment}

    We write $P'=\sum_{i=1}^N a_i\, \curr{\sigma_i,v_1^i,\dots,v_k^i}$ for some $N \in \N$ with $0<a_i \in \R$ and define the set
    \begin{equation*}
        \mathcal{D}_0:=\left\{ \lambda(x-y):\, x,y \in \sigma_i \text{ for some }i,\, \lambda \in\R
        \right\}.
    \end{equation*}
    Let $p\in H$. Since $H$ is a hyperplane and $\mathcal{D}_0$ is a finite union of $k$-dimensional subspaces
    of $\B$ and $k<\dim\B$, there is $w\in\B$, $\|w\|=1$, with $w\notin\mathcal{D}_0$
    and $w+p\notin H$.
    %Since ${\rm dim} S>k$, there is $w \in \mathbb{S}^{m-1}$ such that $w \notin \mathcal{D}$. 
    We define $\tau_t \colon \B \to \B$ as the map $\tau_t(x):=x+tw$, $t \in [0,t_0]$. We define $P_t:={\tau_t}_* P'$. 
    Let $t_1 \in (0,t_0)$ to be chosen later. Given $t\in [0,t_1]$, we apply Proposition \ref{lemma:homotopyform} with the choices $\psi:={\rm Id}$ and $\phi:=\tau_{t}$ and we have
    \begin{equation*}
        \Flatnorm(P'-P_t)\le t_1((k+1)\Mass(P)+k \Mass(\partial P)). 
    \end{equation*}
    We choose $t_1:= \varepsilon (2((k+1)\Mass(P)+k \Mass(\partial P)))^{-1}$.
    
    Arguing as in \cite[Corollary 3.2]{AlbertiMarcheseFlatChains}, we have that $\mu \perp \|P_t\|$ for all but countably many $t\in [0,t_1]$.
    Since the measures $\{\|P_t\|\colon t\in [0,t_1]\}$ are mutually singular,
    we then find one $t_2\in [0,t_1]$ such that $\|P_{t_2}\|\perp \mu$ and $\|P_{t_2}\|(H)=0$.
    Set $P:=P_{t_2}$ and observe that
    the polyhedral current $P$ satisfies the conclusion of the proposition. Indeed, we have that $\Flatnorm(P-P') \le \varepsilon$ and, since $\tau_{t_2}$ is an isometry,
    \begin{equation*}
        \Mass(P)=\Mass(P') \le (1+\varepsilon) \relMass(T).
    \end{equation*}
\end{proof}

\begin{lemma}
\label{lemma:relmass_equal_to_mass}
Let $(\mathbb{B},\|\cdot\|)$ be a Banach space and let $C$ be as before. 
We have that $\relMass(T) \ge \Mass(T)$ for every $T \in \cM_k(C)$.
Moreover, if $T\in \cM_1(C)$ and $\|T\|$ is inner regular by compact sets, we have that $T\in \cF_1(C)$ and $\relMass(T)=\Mass(T)$ for $T\in \cM_1(C)$.
\end{lemma}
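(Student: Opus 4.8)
The plan is to establish the two assertions separately; both reduce to results already available in the paper. For the first inequality $\relMass(T)\ge\Mass{(T)}$ (which I read as holding for all $T\in\cM_k(C)$, with the convention $\relMass(T)=+\infty$ when $T\notin\cF_k(C)$, so that the inequality is trivially true there since $\Mass{(T)}<\infty$ always), I would argue by lower semicontinuity of mass along flat-convergent sequences. If $P_n\in\cP_k(C)$ and $\Flatnorm(T-P_n)\to 0$, write $T-P_n=\partial S_n+R_n$ with $\Mass{(S_n)}+\Mass{(R_n)}\to 0$; then from $\partial S_n(f,\pi_1,\dots,\pi_k)=S_n(1,f,\pi_1,\dots,\pi_k)$ and the defining mass bound one gets
\[
|T(f,\pi_1,\dots,\pi_k)-P_n(f,\pi_1,\dots,\pi_k)|\le\big(\LIP(f)\,\Mass{(S_n)}+\|f\|_\infty\,\Mass{(R_n)}\big)\prod_{i=1}^{k}\LIP(\pi_i)
\]
for all $(f,\pi_1,\dots,\pi_k)\in D^k(C)$, whose right-hand side tends to $0$; thus $P_n\to T$ weakly, and since the mass is lower semicontinuous under weak convergence of metric currents \cite{AK00}, $\Mass{(T)}\le\liminf_n\Mass{(P_n)}$. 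Taking the infimum over admissible sequences yields $\Mass{(T)}\le\relMass(T)$.

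For the second assertion, using the first part I would only need to produce $P_i\in\cP_1(C)$ with $\Flatnorm(T-P_i)\to 0$ and $\limsup_i\Mass{(P_i)}\le\Mass{(T)}$; this simultaneously gives $T\in\cF_1(C)$ and $\relMass(T)\le\Mass{(T)}$. First I would reduce to a separable ambient space: since $\|T\|$ is inner regular by compact sets it is concentrated on a $\sigma$-compact --- hence separable --- set $S$, and $F:=\overline{\operatorname{conv}}(S)$ is a closed, convex, bounded, complete and separable subset of $\B$ with $\supp(T)\subseteq F\subseteq C$. By Lemma~\ref{lemma:restrictions_of_currents} (or Proposition~\ref{prop:isometry_of_currents_on_C}) there is $\widehat T\in\cM_1(F)$ with $\|\widehat T\|$ inner regular by compact sets, $\Mass{(\widehat T)}=\Mass{(T)}$ and $\iota_*\widehat T=T$ for the isometric inclusion $\iota\colon F\hookrightarrow C$. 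Then, $F$ being complete and quasiconvex, Corollary~\ref{coro:approximation_by_normal_currents} yields normal currents $\widehat N_i\in\cN_1(F)$ with $\Mass{(\widehat T-\widehat N_i)}\to 0$, hence $\Mass{(\widehat N_i)}\to\Mass{(\widehat T)}=\Mass{(T)}$ and $\Flatnorm(\widehat T-\widehat N_i)\to 0$.

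Next, $F$ is a closed convex subset of a Banach space, so it is complete, separable and admits a conical geodesic bicombing given by line segments; thus Theorem~\ref{thm:geodesic_approximation} applies and produces, for each $i$, a finite combination of segment currents $\widehat Q_i=\sum_j a_{i,j}\curr{\gamma_{i,j}}$ with $\gamma_{i,j}$ line segments in $F$, $\Flatnorm(\widehat N_i-\widehat Q_i)\le 1/i$ and $\Mass{(\widehat Q_i)}\le\Mass{(\widehat N_i)}$. For fixed $i$ the finitely many $\gamma_{i,j}$ span a finite-dimensional affine subspace of $\B$, so subdividing them at their mutual intersections and combining coefficients over coinciding pieces rewrites $\widehat Q_i$, without changing it as a current, as a bona fide polyhedral $1$-chain $\widehat P_i\in\cP_1(F)$ (here I use that $\curr{\gamma}=\curr{\sigma,v}$ for the $1$-simplex $\sigma$ spanned by the endpoints of $\gamma$ and the corresponding unit vector $v$, together with additivity of such currents under subdivision); in particular $\Mass{(\widehat P_i)}=\Mass{(\widehat Q_i)}\le\Mass{(\widehat N_i)}$. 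Setting $P_i:=\iota_*\widehat P_i\in\cP_1(C)$ (the segments, now read in $C\supseteq F$, retain their non-overlapping structure), one has $\Mass{(P_i)}=\Mass{(\widehat P_i)}\le\Mass{(\widehat N_i)}\to\Mass{(T)}$ and, as $\iota$ is a $1$-Lipschitz isometric inclusion,
\[
\Flatnorm(T-P_i)=\Flatnorm\big(\iota_*(\widehat T-\widehat P_i)\big)\le\Flatnorm(\widehat T-\widehat P_i)\le\Flatnorm(\widehat T-\widehat N_i)+\tfrac1i\to 0,
\]
which closes the argument.

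The hard part is organisational rather than conceptual: I expect the main care to go into checking that flat convergence indeed upgrades to weak convergence with the quantitative bounds used above, and into making the reduction to the separable convex set $F$ watertight --- in particular that pushforward along the isometric inclusion preserves mass and does not increase the flat norm, and that the finite families of line segments delivered by Theorem~\ref{thm:geodesic_approximation}, which a priori may overlap arbitrarily, can be reorganised into genuine elements of $\cP_1(C)$ without altering the underlying current.
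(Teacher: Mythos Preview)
Your proposal is correct and follows essentially the same route as the paper: lower semicontinuity of mass under flat convergence for the inequality $\relMass(T)\ge\Mass(T)$, and then the combination of Corollary~\ref{coro:approximation_by_normal_currents} with Theorem~\ref{thm:geodesic_approximation} (after a reduction to a separable convex ambient set) to produce the polyhedral approximants for $k=1$. You are in fact more explicit than the paper in two places---spelling out why flat convergence gives weak convergence, and verifying that the finite sums of segments from Theorem~\ref{thm:geodesic_approximation} can be reorganised into genuine polyhedral $1$-chains---which the paper leaves implicit.
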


\begin{proof}
By lower semicontinuity of $\Mass$ with respect to flat convergence, we have that $\relMass(T) \ge \Mass(T)$ for every $T \in \cM_1(C)$.

For the converse inequality, we fix $T\in \cM_1(C)$ and note that, arguing as in Corollary \ref{coro:approximation_by_normal_currents}, we may assume $C$ to be separable. We notice that $(C,\|\cdot\|)$, as a convex subset of a Banach space, is a metric space that admits a conical geodesic bicombing, so we are in position to apply Theorem \ref{thm:geodesic_approximation}.  Moreover, since $(C,\|\cdot\|)$ is quasiconvex, we apply Corollary \ref{coro:approximation_by_normal_currents} and we can approximate $T$ with elements in $\cN_1(C)$. Both results together and a diagonal argument give that for every $T \in \cM_1(C)$ there exists a sequence of polyhedral chains $\{P_n\}_n \subset \cP_1(C)$ such that $\Flatnorm(P_n-T)\to 0$ and 
$\Mass(P_n) \to \Mass(T)$. This proves that $T\in \cF_1(C)$ and that $\Mass(T)\ge \relMass(T)$.
\end{proof}

\begin{proposition}
\label{prop:construction_orthogonal_rectifiable_current_sameboundary}
 Let $(\mathbb{B},\|\cdot\|)$ be a Banach space. Let $C \subset \B$ be closed, convex and bounded with $0\in{\rm int}(C)$, $\mu$ a nonnegative finite Borel measure and $H$ an affine hyperplane in $\B$. Let $T \in \cM_1(C)$ such that $\|T\|$ is inner regular by compact sets.
 Then, for $\varepsilon>0$, there is a rectifiable $1$-current $R\in \cM_1(C)$ such that 
 \begin{itemize}
     \item[(i)] $\partial R = \partial T$;
     \item[(ii)] $\mu \perp \|R\|$ and $\|R\|(H)=0$;
     \item[(iii)] $\Mass(R) \le (1+\varepsilon)\Mass(T)$.
 \end{itemize}
\end{proposition}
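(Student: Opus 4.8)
The plan is to obtain $R$ as a limit in mass of rectifiable currents, starting from a good first approximation furnished by Lemma \ref{lemma:constructu_current_orthogonal_to_mu} and then iteratively correcting its boundary by small, carefully chosen fillings. If $\Mass(T)=0$ then $T=0$ and $R=0$ works, so assume $\Mass(T)>0$. First I would reduce to the case $\B$ separable: since $\|T\|$ is inner regular by compact sets it is concentrated on a $\sigma$-compact, hence separable, set $S$, and — arguing as in the reductions to the separable case used for Corollary \ref{coro:approximation_by_normal_currents} and Lemma \ref{lemma:relmass_equal_to_mass}, transferring currents via Proposition \ref{prop:isometry_of_currents_on_C} — one may replace $\B$ by a separable closed subspace $\B_0$ containing $S$ (enlarged so that $H\cap\B_0$ is still an affine hyperplane), $C$ by $C_0:=C\cap\B_0$ (closed, convex, bounded, with $0\in\mathrm{int}(C_0)$) and $\mu$ by $\mu\restr{\B_0}$, then solve the problem there and push the solution forward under the isometric inclusion $\B_0\hookrightarrow\B$. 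From now on $\B$ is separable, so $\partial$ maps $\cM_1(C)$ into $\AEm(C)$ with $\|\partial\|\le1$ by Lemma \ref{lemma:bdry_operator_well_defined}; in particular $\|\partial S-\partial S'\|_{\AEm(C)}\le\Flatnorm(S-S')$ for $S,S'\in\cM_1(C)$, since any decomposition $S-S'=\partial V+U$ with $V\in\cN_2(C)$, $U\in\cM_1(C)$ gives $\partial S-\partial S'=\partial U$ and $\|\partial U\|_{\AEm(C)}\le\Mass(U)$. By Lemma \ref{lemma:relmass_equal_to_mass}, $T\in\cF_1(C)$ and $\relMass(T)=\Mass(T)$.

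The main ingredient is a \emph{filling lemma}: for a molecule $m=\sum_i a_i(\delta_{p_i}-\delta_{q_i})$ on $C$ and $\beta>0$, there is a rectifiable $Q\in\cM_1(C)$, a finite sum of oriented segments, with $\partial Q=m$, $\Mass(Q)\le\|m\|_{\AEm(C)}+\beta$, $\|Q\|\perp\mu$ and $\|Q\|(H)=0$. To build it, fix a representation of $m$ with $\sum_i|a_i|\,\|p_i-q_i\|<\|m\|_{\AEm(C)}+\beta/2$ (discard zero terms) and, for each $i$, route the $i$-th term through an auxiliary point $z_i\in\mathrm{int}(C)$ chosen within a small distance $\delta$ of the midpoint of $[p_i,q_i]$ — possible since $\mathrm{int}(C)$ is dense in $C$ — using the two segments $[q_i,z_i]$ and $[z_i,p_i]$, which lie in $C$ by convexity. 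Telescoping of boundaries gives $\partial Q=m$, while $\Mass(Q)\le\sum_i|a_i|(\|p_i-q_i\|+2\delta)\le\|m\|_{\AEm(C)}+\beta$ once $\delta$ is small. For the two singularity properties one takes $z_i$ generic: working in a fixed finite-dimensional subspace containing all the $p_i,q_i$, the origin and a vector off $H$, the directions $z_i-p_i$ and $z_i-q_i$ may be chosen to avoid the at most countably many rays through $p_i$, resp.\ $q_i$, that carry $\mu$-mass, and also the direction space of $H$; then each of the two segments is $\mathcal H^1$-singular with respect to $\mu$ and meets $H$ in at most one point. This is exactly the genericity mechanism of \cite[Corollary 3.2]{AlbertiMarcheseFlatChains} already invoked for Lemma \ref{lemma:constructu_current_orthogonal_to_mu}.

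With the filling lemma in hand I would run the iteration. Fix a small $\varepsilon_1>0$ and set $\varepsilon_{n+1}:=\varepsilon_n^{2}$. Apply Lemma \ref{lemma:constructu_current_orthogonal_to_mu} to $T$ to get $R_1\in\cP_1(C)$ with $\|R_1\|\perp\mu$, $\|R_1\|(H)=0$, $\Mass(R_1)\le(1+\varepsilon_1)\Mass(T)$ and $\|\partial R_1-\partial T\|_{\AEm(C)}\le\Flatnorm(R_1-T)\le\varepsilon_1$. Inductively, given $R_n$ rectifiable in $C$ with $\|\partial R_n-\partial T\|_{\AEm(C)}\le\varepsilon_n$, $\|R_n\|\perp\mu$ and $\|R_n\|(H)=0$, pick a molecule $\tilde m_n$ with $\|\tilde m_n-(\partial T-\partial R_n)\|_{\AEm(C)}<\varepsilon_{n+1}$ (molecules are dense in $\AEm(C)$), apply the filling lemma to $\tilde m_n$ with $\beta=\varepsilon_n^{2}$ to obtain $Q_n$, and set $R_{n+1}:=R_n+Q_n$. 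Then $\partial R_{n+1}-\partial T=(\partial T-\partial R_n)-\tilde m_n$ has $\AEm(C)$-norm $<\varepsilon_{n+1}$; moreover $\Mass(Q_n)\le\|\tilde m_n\|_{\AEm(C)}+\varepsilon_n^{2}\le2\varepsilon_n+\varepsilon_n^{2}$, and $\|R_{n+1}\|\le\|R_n\|+\|Q_n\|$ propagates the singularity properties by subadditivity of the mass measure. Hence $\{R_n\}$ is Cauchy in mass; set $R:=\lim_n R_n\in\cM_1(C)$.

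It remains to verify the three properties. Each $R_n$ with $n\ge1$ is a finite sum of segment currents, so $\|R_n\|$ is carried by a $1$-rectifiable set; since the class of rectifiable currents is closed under convergence in mass (\cite{AK00}), $R$ is rectifiable, and its support lies in $\overline C=C$. One has $\partial R=\partial T$ because $\partial R_n\to\partial R$ in $\AEm(C)$ (as $\|\partial\|\le1$) while also $\partial R_n\to\partial T$. The mass bound follows from $\Mass(R)=\lim_n\Mass(R_n)\le(1+\varepsilon_1)\Mass(T)+\sum_{j\ge1}(2\varepsilon_j+\varepsilon_j^{2})\le(1+\varepsilon)\Mass(T)$, using $\sum_j\varepsilon_j\le2\varepsilon_1$ and choosing $\varepsilon_1$ small relative to $\varepsilon\Mass(T)$. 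Finally, if $A_n$ carries $\|R_n\|$ with $\mu(A_n)=0$, then $\|R\|\big((\bigcup_n A_n)^c\big)\le\Mass(R-R_n)\to0$, so $\|R\|$ is concentrated on the $\mu$-null set $\bigcup_n A_n$, i.e.\ $\|R\|\perp\mu$; and $\|R\|(H)\le\Mass(R-R_n)+\|R_n\|(H)=\Mass(R-R_n)\to0$, so $\|R\|(H)=0$. The only genuinely delicate point is the filling lemma — specifically, enforcing both singularity conditions while staying inside $C$ with prescribed endpoints, which is why one detours through a generic interior point rather than, as in \cite{AlbertiMarcheseFlatChains}, translating a whole polyhedral chain after first shrinking it into $\mathrm{int}(C)$; the remainder is bookkeeping with the quadratic error schedule.
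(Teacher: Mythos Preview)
Your proposal is correct and takes a genuinely different route from the paper. Both arguments build $R$ as a mass-convergent series of rectifiable pieces, but they differ in what is iterated. The paper applies Lemma \ref{lemma:constructu_current_orthogonal_to_mu} at \emph{every} step: given the mass-remainder $R_{n-1}$ (which, by Lemma \ref{lemma:relmass_equal_to_mass}, lies in $\cF_1(C)$ with $\relMass=\Mass$), it extracts a further polyhedral piece $P_n$ satisfying the singularity constraints, and the final $R$ is $\sum_n P_n$. You instead invoke Lemma \ref{lemma:constructu_current_orthogonal_to_mu} only once (for $R_1$) and thereafter correct only the \emph{boundary defect}, measured in the Arens--Eells norm: approximate $\partial T-\partial R_n$ by a molecule and fill it with generic broken segments via your filling lemma. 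Your iteration step is lighter --- no polyhedral approximation is re-invoked, and the filling lemma is a direct Baire-category argument in a finite-dimensional slice --- but it leans on the Arens--Eells machinery of Section \ref{sec:flat_chain_conjecture} and the attendant reduction to separable $\B$. The paper's iteration is more self-contained at this point of the exposition but re-uses the heavier Lemma \ref{lemma:constructu_current_orthogonal_to_mu} and Lemma \ref{lemma:relmass_equal_to_mass} at each stage. One small remark: rather than citing closure of rectifiable currents under mass limits abstractly, it is cleaner here (and this is what the paper does) to observe directly that $\|R\|\le\|R_1\|+\sum_n\|Q_n\|$, the right-hand side being carried by a countable union of segments and absolutely continuous with respect to $\mathcal H^1$.
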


\begin{proof}
Choose $\delta$ such that $(1+\delta)^2 \le 1+\varepsilon$, set $C_0:=1+\delta$, and define $\varepsilon_n:=\Mass(T) \delta 2^{-n}$.
We apply Lemma \ref{lemma:constructu_current_orthogonal_to_mu} to $T$ and we get $P_0 \in \cP_{1}(C)$ such that 
\begin{equation*}
    T-P_0=R_0+\partial S_0,\quad \Mass(R_0)+ \Mass(S_0) \le \varepsilon_0,\quad \Mass(P_0)\le C_0 \relMass(T)=C_0\Mass(T).
\end{equation*}
for some $R_0 \in \cM_{1}(C)$, $S_0 \in \cN_{2}(C)$. Notice that the fillings for computing the flat norms in Lemma \ref{lemma:rescalement_in_the_interior} and Lemma \ref{lemma:constructu_current_orthogonal_to_mu}.
We have also used that $\relMass=\Mass$ on metric $1$-currents (with inner regular mass measure) by applying Lemma \ref{lemma:relmass_equal_to_mass}.

We have that $\partial T= \partial P_0 + \partial R_0$. We are in position to apply again Lemma \ref{lemma:constructu_current_orthogonal_to_mu} to $R_0$, because $R_0$ is a metric $1$-current, hence a flat $1$-chain by Lemma \ref{lemma:relmass_equal_to_mass} and $\relMass(R_0)=\Mass(R_0)$. We choose $\varepsilon=\varepsilon_1$ and we have that there exists $P_1\in \cP_{1}(C)$ such that 
\begin{equation*}
    R_0-P_1=R_1+\partial S_1,\quad \Mass(R_1)+ \Mass(S_1) \le \varepsilon_1,\quad \Mass(P_1)\le C_0 \Mass(R_0)
\end{equation*}
for some $R_1 \in \cM_{1}(C)$, $S_1 \in \cN_{2}(C)$. In particular $\partial T= \partial P_0 +\partial P_1 +\partial R_1$.
We repeat the construction iteratively, thus obtaining a remainder term $R_n$ to which we apply the previous lemma with the choice $\varepsilon=\varepsilon_n$.
This gives a sequence of polyhedral chains $P_n \in \cP_{1}(C)$ and remainder terms $R_n \in \cM_{1}(C)$ such that 
\begin{equation*}
    \partial T = \partial \left( \sum_{i=0}^n P_i \right) + \partial R_n,\qquad \Mass(P_n) \le C_0 \Mass(R_{n-1})\le C_0 \varepsilon_{n-1} ,\qquad \Mass(R_n) \le \varepsilon_n
\end{equation*}
for every $n \in \N$.
Moreover, we have that $\mu \perp \|P_i\|$ and $\|P_i\|(H)=0$ for every $0\leq i \le n$.
Since $\sum_{n=0}^\infty \Mass(P_n) < \infty$, we have that $\sum_{i=0}^n P_i$ is Cauchy with respect to $\Mass(\cdot)$, thus converges to some $1$-rectifiable current $R \in \cM_1(C)$.
For $g\colon \X\to\R$ Lipschitz, we have
\begin{equation}
\label{eq:pointwise_estimate_partial_sum}
    T(1, g)= \partial T(g)= \sum_{i=0}^n \partial  P_i(g)  + \partial R_n(g)= \sum_{i=0}^n P_i(1,g) + R_n(1,g).
\end{equation}
Since $|R_n(1,g)| \le \Lip(g) \Mass(R_n)$ for every $n \in \N$ and $\Mass(R_n) \to 0$, by taking the limit in \eqref{eq:pointwise_estimate_partial_sum} we have that $\partial R=\partial T$.

To prove the mass estimate, we compute
\begin{equation*}
    \Mass(R)\le \sum_{i=0}^\infty \Mass(P_i) \le C_0 \Mass(T) + C_0 \sum_{i=0}^\infty \varepsilon_i \le C_0 \Mass(T) + C_0\delta\Mass(T)\le (1+\varepsilon)\Mass(T).
\end{equation*}

It remains to check that $\mu \perp \|R\|$ and $\|R\|(H)=0$. Since $R=\sum_{i=0}^\infty P_i$ with convergence in mass,
it can be readily checked, using $\|\sum_{i=0}^nP_i\|\leq\sum_{i=0}^n\|P_i\|$, that $\|R\|\leq \sum_{i=0}^\infty \|P_i\|$.
Hence $\|R\| \perp \mu$ and $\|R\|(H)=0$ follow from $\|P_i\| \perp \mu$ and $\|P_i\|(H)=0$ for each $i \in \N$.
\end{proof}

\begin{proof}[Proof of Theorem \ref{theorem:main_alberti_marchese}]
We apply Proposition \ref{prop:construction_orthogonal_rectifiable_current_sameboundary} with $\mu =\|T\|$ and take $N:=T-R$. Moreover, since $\|T\|\perp \|R\|$, there exists a Borel set $B \subset \B$ such that $\|T\|$ is concentrated on $B$ and $\|R\|(B)=0$. Lastly, if ${\rm supp}(T)$ is contained in an affine hyperplane $H$, then $N\restr{H}=N\restr{{\rm supp}(T)}=T$.
\end{proof}

\section{Representation result for metric $1$-currents in Banach spaces}
\label{sec:representation_results}
We begin with some notation and measurability lemmas, cf.\ \cite[Lemma A.1]{EB_Teri_p_weak}.
Set
\begin{equation*}
\overline{\Gamma}(\X):=\{(t,\gamma)\colon t\in {\rm dom}(\gamma),\gamma\in\Gamma(\X)\},
\end{equation*}
endow it with the subspace topology $\overline{\Gamma}(\X)\subset [0,1]\times\Gamma(\X)$,
and let $e\colon\overline{\Gamma}(\X)\to\X$ be the continuous map $e(t,\gamma):=\gamma_t$.
\begin{lemma}\label{lemma:measurability_enhanced_frag}
Let $(\X,\sfd)$ be a separable metric space and $f\colon \X\to\R$ a continuous function.
Then the sets
\begin{equation*}
\begin{aligned}
    \mathit{MD}&:=\{(t,\gamma)\in\overline{\Gamma}(\X)\colon t\text{ is a limit point of }{\rm dom}(\gamma)\text{ and }|\dot{\gamma}_t|\text{ exists}\} \\
    D(f)&:=\{(t,\gamma)\in\overline{\Gamma}(\X)\colon t\text{ is a limit point of }{\rm dom}(\gamma)\text{ and }(f\circ\gamma)_t'\text{ exists}\}
\end{aligned}
\end{equation*}
are Borel and the maps
\begin{equation*}
\begin{aligned}
(t,\gamma)\in\overline{\Gamma}(\X)&\mapsto
\left\{
\begin{array}{cc}
|\dot{\gamma}_t|, & (t,\gamma)\in\mathit{MD}\\
0, &\text{otherwise}
\end{array}
\right.
\\
(t,\gamma)\in\overline{\Gamma}(\X)&\mapsto
\left\{
\begin{array}{cc}
(f\circ\gamma)_t', & (t,\gamma)\in D(f)\\
0, &\text{otherwise}
\end{array}
\right.
\end{aligned}
\end{equation*}
are Borel measurable.
\end{lemma}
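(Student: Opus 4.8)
The plan is to express the two bad sets $\mathit{MD}$ and $D(f)$ as level sets of explicit Borel functions built from difference quotients, using only two features of the graph–Hausdorff topology on $\Gamma(\X)$: \emph{(a)} the evaluation map $e\colon\overline{\Gamma}(\X)\to\X$, $e(t,\gamma)=\gamma_t$, is continuous; and \emph{(b)} if $(t_k,\gamma_k)\to(t,\gamma)$ in $\overline{\Gamma}(\X)$ and $s\in{\rm dom}(\gamma)$, then there are $s_k\in{\rm dom}(\gamma_k)$ with $s_k\to s$ and $\gamma_k(s_k)\to\gamma_s$ — this is just the ``lower'' half of Hausdorff convergence of the graphs applied to the point $(s,\gamma_s)\in{\rm gr}(\gamma)$. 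Since $\X$ is separable, $\overline{\Gamma}(\X)\subset[0,1]\times\Gamma(\X)$ is separable metrizable, so countable lattice operations preserve Borel measurability.

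For a rational $r>0$ and $(t,\gamma)\in\overline{\Gamma}(\X)$ set
\[
\overline{D}_r(t,\gamma):=\sup\Big\{\tfrac{\sfd(\gamma_s,\gamma_t)}{|s-t|}\colon s\in{\rm dom}(\gamma),\ 0<|s-t|<r\Big\},
\]
with the convention $\sup\varnothing:=0$, let $\underline{D}_r$ be the corresponding infimum with $\inf\varnothing:=+\infty$, and define $\overline{Q}_r,\underline{Q}_r$ identically but with the signed quotient $\tfrac{f(\gamma_s)-f(\gamma_t)}{s-t}$ in place of $\tfrac{\sfd(\gamma_s,\gamma_t)}{|s-t|}$ and $\sup\varnothing:=-\infty$. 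I claim each $\overline{D}_r,\overline{Q}_r$ is lower semicontinuous and each $\underline{D}_r,\underline{Q}_r$ upper semicontinuous. For lower semicontinuity of $\overline{Q}_r$: given $(t_k,\gamma_k)\to(t,\gamma)$ and an admissible $s$ (so $s\in{\rm dom}(\gamma)$, $0<|s-t|<r$), pick $s_k\to s$, $\gamma_k(s_k)\to\gamma_s$ by \emph{(b)}; since $0<|s-t|<r$ is an open condition we have $0<|s_k-t_k|<r$ for large $k$, and by continuity of $\sfd$, $e$, $f$ the quotient at $(s_k,t_k,\gamma_k)$ converges to the one at $(s,t,\gamma)$, whence $\liminf_k\overline{Q}_r(t_k,\gamma_k)\ge\tfrac{f(\gamma_s)-f(\gamma_t)}{s-t}$; taking the supremum over admissible $s$ gives the claim (the empty case is trivial as then $\overline{Q}_r\equiv-\infty$). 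The other three are symmetric.

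Since $r\mapsto\overline{D}_r,\overline{Q}_r$ are nondecreasing and $r\mapsto\underline{D}_r,\underline{Q}_r$ nonincreasing, the limits as $r\downarrow0$ exist and may be computed along a sequence $r_n\downarrow0$, so
\[
\overline{D}:=\inf_n\overline{D}_{r_n},\quad \underline{D}:=\sup_n\underline{D}_{r_n},\quad \overline{Q}:=\inf_n\overline{Q}_{r_n},\quad \underline{Q}:=\sup_n\underline{Q}_{r_n}
\]
are Borel on $\overline{\Gamma}(\X)$. If $t$ is a limit point of ${\rm dom}(\gamma)$ then, by definition of $|\dot\gamma_t|$ and $(f\circ\gamma)_t'$, the metric derivative exists iff $\overline{D}(t,\gamma)=\underline{D}(t,\gamma)$ (automatically finite, being $\le1$) and $(f\circ\gamma)_t'$ exists iff $\overline{Q}(t,\gamma)=\underline{Q}(t,\gamma)\in\R$; if $t$ is \emph{not} a limit point of ${\rm dom}(\gamma)$ then for small $r$ the admissible set is empty, so the conventions force $\overline{D}=0\ne+\infty=\underline{D}$ and $\overline{Q}=-\infty\ne+\infty=\underline{Q}$. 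Hence $\mathit{MD}=\{\overline{D}=\underline{D}\}$ and $D(f)=\{\overline{Q}=\underline{Q}\}\cap\{\overline{Q}\in\R\}$, with no need to separately impose the limit-point condition; both are preimages of Borel sets (the closed diagonal of $[-\infty,\infty]^2$, intersected with the open set $\R\times\R$ in the second case) under the Borel maps $(\overline{D},\underline{D})$, $(\overline{Q},\underline{Q})$, hence Borel. Finally, on $\mathit{MD}$ the metric derivative equals $\overline{D}$ and on $D(f)$ the derivative equals $\overline{Q}$, real-valued there; extending each by $0$ on the (Borel) complement yields a Borel function, which is the assertion.

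The only genuinely load-bearing step is the semicontinuity argument, and there the single input that matters is property \emph{(b)} of the graph–Hausdorff topology, which is exactly what lets one track an admissible parameter $s$ under perturbation of $(t,\gamma)$; the remaining work — monotonicity in $r$, the $\sup\varnothing/\inf\varnothing$ conventions engineered so that $\mathit{MD},D(f)$ become honest level sets, and Borel measurability of the final piecewise-defined maps — is bookkeeping.
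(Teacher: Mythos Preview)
Your proof is correct, and the load-bearing step---lower/upper semicontinuity of the supremum/infimum of difference quotients via the graph--Hausdorff topology---is exactly the mechanism the paper uses as well (the paper phrases it as lower semicontinuity of $\tilde{\Phi}_{D,r}(t,\gamma):=\sup_{s\in B(t,r)\cap{\rm dom}(\gamma)}|f(\gamma_s)-f(\gamma_t)-D(s-t)|/r$). The organizational choices differ: the paper first builds a Borel selector $\tau_r(t,\gamma)\in\overline{B}(t,r)\cap{\rm dom}(\gamma)$ from the semicontinuity of $(t,\gamma)\mapsto\inf/\sup(B(t,r)\cap{\rm dom}(\gamma))$, uses it to show the limit-point set $L$ is Borel, then characterises $D(f)$ via a countable union/intersection over rational $D,R,\varepsilon$, and finally obtains the derivative as a pointwise limit of the Borel quotients $F_r(t,\gamma)=\big((f\circ\gamma)_{\tau_r(t,\gamma)}-(f\circ\gamma)_t\big)/(\tau_r(t,\gamma)-t)$. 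Your route is more economical: by working with $\overline{D},\underline{D},\overline{Q},\underline{Q}$ directly and choosing the $\sup\varnothing/\inf\varnothing$ conventions so that non-limit-points are automatically excluded, you avoid the selector construction entirely and read off both the Borel sets and the derivative maps from the same four functions. The paper's selector has the minor advantage of producing an explicit approximating sequence for the derivative, but for the statement at hand your packaging is cleaner.
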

\begin{proof}
We only prove the part of the statement regarding $D(f)$ and $(t,\gamma)\mapsto (f\circ\gamma)_t'$;
the other one is similar.
For $r>0$, the maps $(t,\gamma)\mapsto \inf(B(t,r)\cap{\rm dom}(\gamma))$,
$(t,\gamma)\mapsto\sup(B(t,r)\cap{\rm dom}(\gamma))$ are upper and
lower-semicontinuous, respectively.
It is then not difficult to construct, for each $r>0$, a Borel map $\tau_r\colon\overline{\Gamma}(\X)\to [0,1]$,
with $\tau_r(t,\gamma)\in \overline{B}(t,r)\cap{\rm dom}(\gamma)$,
such that $\tau_r(t,\gamma)=t$
if and only if $B(t,r)\cap{\rm dom}(\gamma)=\{t\}$.
In particular, since $\tau(t,\gamma):=t$ is continuous,
\begin{equation*}
L:=\{(t,\gamma)\in\overline{\Gamma}(\X)\colon t\text{ is a limit point of }{\rm dom}(\gamma)\}
=\bigcap_{\substack{r\in\Q \\ r>0}}\{\tau_r\neq\tau\}
\end{equation*}
is a Borel set.
Observe that, for $(t,\gamma)\in  L$, it holds $(t,\gamma)\in D(f)$
if and only if for every $\varepsilon>0$ there are $R>0$ and $D\in\R$
such that for each $0<r<R$ we have
\begin{equation*}
    |(f\circ\gamma)_s-(f\circ\gamma)_t-D(s-t)|\leq\varepsilon r
\end{equation*}
for all $s\in B(t,r)\cap{\rm dom}(\gamma)$.
Since the function $(x_1,x_2,t_1,t_2)\in \X^2\times[0,1]^2\mapsto |f(x_2)-f(x_1)-D(t_2-t_1)|$
is continuous,
one can see that the map $\tilde{\Phi}_{D,r}\colon\overline{\Gamma}(\X)\to[0,\infty)$
\begin{equation*}
    \tilde{\Phi}_{D,r}(t,\gamma):=\sup_{s\in B(t,r)\cap{\rm dom}(\gamma)}
    \frac{|(f\circ\gamma)_s-(f\circ\gamma)_t-D(s-t)|}{r}
\end{equation*}
is lower-semicontinuous and therefore so is
$\Phi_{D,R}:=\sup_{0<r<R}\tilde{\Phi}_{D,r}$.
Recalling the previous discussion, we have
\begin{equation*}
D(f)=L\cap\bigcap_{\substack{\varepsilon\in\Q \\ \varepsilon>0}}
    \bigcup_{\substack{R,D\in\Q \\ R>0}}\{\Phi_{D,R}\leq\varepsilon\},
\end{equation*}
showing that $D(f)$ is Borel.
It remains to prove measurability of the map $F\colon\overline{\Gamma}(\X)\to\R$
given by
$F(t,\gamma):=(f\circ\gamma)_t'$ for $(t,\gamma)\in D(f)$, $F(t,\gamma):=0$ otherwise.
Since $\X$ is separable, so is $\Gamma(\X)$,
and the Borel $\sigma$-algebra of $[0,1]\times\Gamma(\X)$
is therefore generated by open sets of the form $A\times B$ with $A\subset [0,1]$,
$B\subset\Gamma(\X)$.
The maps $(t,\gamma)\in\overline{\Gamma}(\X)\mapsto (\tau_r(t,\gamma),\gamma)$,
$(t,\gamma)\mapsto e(\tau_r(t,\gamma),\gamma)=\gamma_{\tau_r(t,\gamma)}$
are then easily seen to be Borel.
Let $F_r\colon\overline{\Gamma}(\X)\to\R$ be given by
\begin{equation*}
    F_r(t,\gamma):=\frac{(f\circ\gamma)_{\tau_r(t,\gamma)}-(f\circ\gamma)_t}{\tau_r(t,\gamma)-t}
\end{equation*}
for $(t,\gamma)\in D(f)$ and $F_r(t,\gamma):=0$ otherwise.
It is then clear that $F_r$ is Borel for every $r>0$ and, since $|\tau_r(t,\gamma)-t|\leq r$,
fixing $r_j\rightarrow 0$ we see that
$F=\lim_{j\rightarrow\infty}F_{r_j}$ is also Borel.
\end{proof}

\begin{lemma}\label{lemma:meas_frag_curr}
Let $(\X,\sfd)$ be a complete and separable metric space.
Then, for $f\in\mathcal{B}^\infty(\X)$, $\pi\in\Lip(\X)$, and $B\subset\X$ Borel, the
maps $\gamma\in\Gamma(\X)\mapsto\curr{\gamma}(f,\pi)$, 
$\gamma\mapsto \|\curr{\gamma}\|(B)$, and $\gamma\mapsto\int_{\gamma^{-1}(B)} |\dot{\gamma}_t|\,\d t$ are Borel measurable.
\end{lemma}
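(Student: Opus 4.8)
The plan is to realise each of the three quantities as an integral in $t\in[0,1]$ of a jointly Borel integrand on $[0,1]\times\Gamma(\X)$ and then invoke the measurability part of Fubini's theorem; for the mass term an additional reduction to a countable supremum is needed.

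First, recall that the evaluation map $e\colon\overline{\Gamma}(\X)\to\X$, $e(t,\gamma)=\gamma_t$, is continuous, so $h\circ e$ is Borel on $\overline{\Gamma}(\X)$ for every Borel $h\colon\X\to\R$; moreover $\overline{\Gamma}(\X)$ is closed, hence Borel, in $[0,1]\times\Gamma(\X)$, and, $\X$ being separable, $\Gamma(\X)$ is separable metrizable, so that $\mathcal B([0,1]\times\Gamma(\X))=\mathcal B([0,1])\otimes\mathcal B(\Gamma(\X))$. Applying Lemma \ref{lemma:measurability_enhanced_frag} to $\pi\in\Lip(\X)\subset C(\X)$ gives Borel functions $G_\pi$ and $M$ on $\overline{\Gamma}(\X)$ with $G_\pi(t,\gamma)=(\pi\circ\gamma)_t'$ and $M(t,\gamma)=|\dot{\gamma}_t|$ for a.e.\ $t\in{\rm dom}(\gamma)$, for every $\gamma$. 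Extending $f\circ e$, $\chi_B\circ e$, $G_\pi$ and $M$ by zero to $[0,1]\times\Gamma(\X)$ (keeping the same names), the products $\Psi_{f,\pi}:=(f\circ e)\,G_\pi$ and $\Theta_B:=(\chi_B\circ e)\,M$ are jointly Borel and bounded (by $\|f\|_\infty\LIP(\pi)$ and $1$ respectively, using $\LIP(\gamma)\le1$), and for each fixed $\gamma$ one has $\int_0^1\Psi_{f,\pi}(t,\gamma)\,\d t=\curr{\gamma}(f,\pi)$ and $\int_0^1\Theta_B(t,\gamma)\,\d t=\int_{\gamma^{-1}(B)}|\dot{\gamma}_t|\,\d t$, where $\curr{\gamma}(f,\pi)=\int_{{\rm dom}(\gamma)}(f\circ\gamma)_t(\pi\circ\gamma)_t'\,\d t$ is meaningful also for $f\in\mathcal B^\infty(\X)$. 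The measurability statement in Fubini's theorem (bounded integrand over the finite measure space $([0,1],\mathcal L^1)$) then shows that $\gamma\mapsto\curr{\gamma}(f,\pi)$ and $\gamma\mapsto\int_{\gamma^{-1}(B)}|\dot{\gamma}_t|\,\d t$ are Borel, which settles the first and third maps.

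For $\gamma\mapsto\|\curr{\gamma}\|(B)$ I would use $\|\curr{\gamma}\|(B)=\Mass(\curr{\gamma}\restr B)$, where $\curr{\gamma}\restr B(f,\pi):=\curr{\gamma}(\chi_Bf,\pi)$ is a metric $1$-current with mass measure $\|\curr{\gamma}\|\restr B$ — legitimate since, by Proposition \ref{prop:bound_current_*uppergradient}, currents act on bounded Borel first arguments, and $\Mass(\curr{\gamma})\le\ell(\gamma)\le1$ uniformly. The point is to express this mass as a countable supremum not depending on $\gamma$. Fix a countable subalgebra $\mathcal A\subset\mathcal B(\X)$ generating $\mathcal B(\X)$ and a countable family $\mathcal P\subset\{\pi\in\Lip(\X):\LIP(\pi)\le1\}$ such that every $1$-Lipschitz $\pi\colon\X\to\R$ is the pointwise limit of a sequence in $\mathcal P$ with uniformly bounded Lipschitz constants; such $\mathcal P$ exists because, for a countable dense $\{x_k\}\subset\X$, one has $\pi=\inf_k\bigl(\pi(x_k)+\sfd(\cdot,x_k)\bigr)$, so the functions $\min_{k\le n}\bigl(q_k+\sfd(\cdot,x_k)\bigr)$, $n\in\N$, $q_k\in\Q$, suffice. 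Starting from the standard characterisation $\Mass(S)=\sup\bigl\{\sum_i|S(\chi_{E_i},\pi_i)|\bigr\}$ over finite Borel partitions $\{E_i\}$ of $\X$ and $\pi_i$ with $\LIP(\pi_i)\le1$, approximating each $E_i$ by a set of $\mathcal A$ in $\|S\|$-measure (then passing to the partition generated by these sets) and each $\pi_i$ by an element of $\mathcal P$ (using the joint continuity axiom of $S$), one arrives at
\[
\|\curr{\gamma}\|(B)=\Mass(\curr{\gamma}\restr B)=\sup\Bigl\{\textstyle\sum_i\bigl|\curr{\gamma}(\chi_{A_i\cap B},\pi_i)\bigr|\ :\ \{A_i\}\text{ finite }\mathcal A\text{-partition of }\X,\ \pi_i\in\mathcal P\Bigr\}.
\]
Each summand $\gamma\mapsto\curr{\gamma}(\chi_{A_i\cap B},\pi_i)$ is Borel by the previous paragraph (as $\chi_{A_i\cap B}\in\mathcal B^\infty(\X)$), so $\gamma\mapsto\|\curr{\gamma}\|(B)$ is a countable supremum of Borel functions, hence Borel.

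The routine parts are the joint Borel measurability of $\Psi_{f,\pi}$ and $\Theta_B$ and the Fubini step. The point requiring care is the last display: one must verify that the \emph{fixed} countable families $\mathcal A$ and $\mathcal P$ compute $\Mass(\curr{\gamma}\restr B)$ simultaneously for every $\gamma$. This rests on the total-variation description of the mass measure of a metric current, on the joint continuity axiom (to pass to the $\pi_i\in\mathcal P$), on the $\inf$-representation of $1$-Lipschitz functions on a separable space, and on the uniform bound $\Mass(\curr{\gamma})\le1$, which keeps the $L^1(\|\curr{\gamma}\|)$-approximation of the partitions under control independently of $\gamma$.
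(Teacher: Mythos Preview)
Your proof is correct and follows essentially the same route as the paper: for $\gamma\mapsto\curr{\gamma}(f,\pi)$ and $\gamma\mapsto\int_{\gamma^{-1}(B)}|\dot{\gamma}_t|\,\d t$ you build a jointly Borel integrand via Lemma~\ref{lemma:measurability_enhanced_frag} and integrate, exactly as the paper does; for $\gamma\mapsto\|\curr{\gamma}\|(B)$ the paper simply invokes Lemma~\ref{lemma:meas_mass} (whose proof, through Lemma~\ref{lemma:mass_countable_sup}, is precisely the countable-supremum argument with a fixed generating algebra $\mathcal A$ and a fixed countable pointwise-dense family of $1$-Lipschitz functions that you have reconstructed inline). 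The only cosmetic difference is that you handle $f\in\mathcal{B}^\infty(\X)$ directly in the first step, whereas the paper first treats $(f,\pi)\in D^1(\X)$ and then passes to bounded Borel $f$ via Lemma~\ref{lemma:meas_mass}; your remark about the uniform bound $\Mass(\curr{\gamma})\le 1$ is unnecessary, since the countable supremum recovers the mass of \emph{any} current on a separable space, not just a uniformly bounded family.
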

\begin{proof}
Define $\mathit{MD}$ as in \cref{lemma:measurability_enhanced_frag}, let
$B\subset\X$ be Borel,
and let $G\colon [0,1]\times\Gamma(\X)\to\R$ be given by $G(t,\gamma):=|\dot{\gamma}_t|$
for $(t,\gamma)\in\mathit{MD}\cap e^{-1}(B)$ and $G(t,\gamma):=0$ otherwise.
Since $\overline{\Gamma}(\X)$ is closed in $[0,1]\times\Gamma(\X)$ and
$e\colon\overline{\Gamma}(\X)\to\X$ continuous, $G$ is Borel by \cref{lemma:measurability_enhanced_frag}.
Then, $\gamma\in\Gamma(\X)\mapsto\int_0^1G(t,\gamma)\,\d t=\int_{\gamma^{-1}(B)}|\dot{\gamma}_t|\,\d t$ is Borel, as claimed. \par
Fix $(f,\pi)\in D^1(\X)$,
let $D(\pi)$ be as in \cref{lemma:measurability_enhanced_frag},
and define $F\colon [0,1]\times\Gamma(\X)\to\R$ as
$F(t,\gamma):=(f\circ\gamma)_t(\pi\circ\gamma)_t'$ for $(t,\gamma)\in D(\pi)$
and $F(t,\gamma):=0$ otherwise.
As before, we see that $F$ is Borel measurable and therefore so is
$\gamma\in\Gamma(\X)\mapsto\int_0^1 F(t,\gamma)\,\d t=\curr{\gamma}(f,\pi)$.
The thesis then follows from \cref{lemma:meas_mass}.
\end{proof}
\begin{theorem}\label{thm:representation_currents}
Let $(\X,\sfd)$ be a complete and separable metric space and $T \in \cM_1(\X)$. Then there exists a finite measure $\eta \in \mathcal{M}(\Gamma(\X))$ such that
\begin{equation}
\label{eq:main_result_representation}
    T(f,\pi)=\int \curr{\gamma}(f,\pi) \,\d \eta(\gamma)\quad\text{ and }\quad\Mass(T)=\int \ell(\gamma)\,\d \eta(\gamma)=\int\mathcal{L}^1(\mathrm{dom}(\gamma))\,\d\eta(\gamma)
\end{equation}
for $(f,\pi)\in D^1(\X)$.
In particular, for $\eta$-a.e.\ $\gamma\in\Gamma(\X)$ it holds $|\dot{\gamma}_t|=1$
at $\mathcal{L}^1$-a.e.\ $t\in{\rm dom}(\gamma)$.
\end{theorem}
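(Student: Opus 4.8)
The plan is to carry out the two-step strategy from the introduction. First I would realise $T$ as the restriction of a \emph{boundaryless} normal $1$-current using the Banach-space structure theorem of Section~\ref{sec:alberti_marchese}, and then restrict the Paolini--Stepanov decomposition (Theorem~\ref{thm:Paolini_Stepanov}) of that current; the whole reason for the extra work in Section~\ref{sec:alberti_marchese} is to be able to take the restriction set \emph{closed}, so that restricting curves to it produces genuine curve fragments. Before that, I would reduce to the case in which $T$ has compact support: since $\X$ is Polish, $\|T\|$ is tight, so there are compacts $K_1\subset K_2\subset\cdots$ with $\|T\|(\X\setminus K_n)\to0$; writing $A_1:=K_1$, $A_n:=K_n\setminus K_{n-1}$ and discarding the $\|T\|$-null complement of $\bigcup_nA_n$, one has $T=\sum_nT_n$ in mass with $T_n:=T\restr{A_n}$, $\sum_n\Mass(T_n)=\Mass(T)$, and $\supp(T_n)\subset K_n$ compact (as $\|T_n\|=\|T\|\restr{A_n}$). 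It then suffices to produce, for each $n$, a finite $\eta_n\in\mathcal{M}(\Gamma(\X))$ with $T_n=\int\curr\gamma\,\d\eta_n$, $\Mass(T_n)=\int\ell(\gamma)\,\d\eta_n=\int\mathcal{L}^1(\mathrm{dom}(\gamma))\,\d\eta_n$ and $\eta_n(\Gamma(\X))\le3\Mass(T_n)$; then $\eta:=\sum_n\eta_n$ has total mass $\le3\Mass(T)$, the action identity for $T$ follows from $\sum_nT_n=T$ in mass together with the uniform bound $|\curr\gamma(f,\pi)|\le\|f\|_\infty\LIP(\pi)$ on $\Gamma(\X)$, the two mass identities follow by summation, and the pointwise statement $|\dot\gamma_t|=1$ a.e.\ follows since $\ell(\gamma)\le\mathcal{L}^1(\mathrm{dom}(\gamma))$ always while the integrals coincide.

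Next, for $T$ with compact support, I would embed $\X$ isometrically into a separable Banach space $\B_0$ and then into $\B:=\R\times\B_0$ (with norm $\|(s,v)\|:=|s|+\|v\|_{\B_0}$) via $j(x):=(1,\iota(x))$; completeness of $\X$ makes $j(\X)$ closed in $\B$, and $\supp(j_*T)=j(\supp T)$ is a compact subset of the affine hyperplane $H:=\{(s,v)\colon s=1\}$. Taking a closed ball $C\subset\B$ centred at $0$ with $j(\supp T)\subset\mathrm{int}(C)$ and using Proposition~\ref{prop:isometry_of_currents_on_C}, $j_*T$ corresponds to $\widehat T\in\cM_1(C)$ with $\|\widehat T\|$ inner regular, $\Mass(\widehat T)=\Mass(T)$ and $\supp(\widehat T)\subset H$. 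Now Theorem~\ref{theorem:main_alberti_marchese} applies with $\varepsilon=1$ (here $\dim\B\ge2$ unless $\X$ is a single point, a trivial case), and \emph{because} $\supp(\widehat T)$ lies in the affine hyperplane $H$ its ``moreover'' clause yields $N\in\cN_1(C)$ with $\partial N=0$, $\Mass(N)\le3\Mass(T)$ and $N\restr F=\widehat T$, where $F:=\supp(\widehat T)$ is \emph{compact}.

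I would then apply Theorem~\ref{thm:Paolini_Stepanov} to $N$ in the complete separable space $(C,\|\cdot\|)$, obtaining $\eta^0$ concentrated on $\Lip_1([0,1],C)$ with $\eta^0(\Lip_1([0,1],C))\le\Mass(N)$, $N=\int\curr\gamma\,\d\eta^0$ and $\|N\|(B)=\int\|\curr\gamma\|(B)\,\d\eta^0=\int\gamma_*\mathcal{L}^1(B)\,\d\eta^0$; comparing the last two expressions forces $|\dot\gamma_t|=1$ for $\mathcal{L}^1$-a.e.\ $t$, for $\eta^0$-a.e.\ $\gamma$. Viewing $\eta^0$ as a measure on $\Gamma(C)$ (Remark~\ref{rem:topologies_agree}) and invoking Lemma~\ref{lemma:meas_restriction_frag} with $E=\overline E=F$ (legitimate since $F$ is compact), the assignment $\gamma\mapsto\gamma\restr{\gamma^{-1}(F)}$ is a Borel map $\Gamma(C)\to\Gamma(F)$ (sending curves disjoint from $F$ to the empty fragment, which contributes nothing). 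Let $\eta'$ denote its push-forward of $\eta^0$. Using $N\restr F(f,\pi)=N(f\chi_F,\pi)$, $\curr\gamma(f\chi_F,\pi)=\curr{\gamma\restr{\gamma^{-1}(F)}}(f,\pi)$, and $\|\curr\gamma\|(\cdot\cap F)=\|\curr{\gamma\restr{\gamma^{-1}(F)}}\|(\cdot)$, one gets $\widehat T=\int\curr{\gamma'}\,\d\eta'$ and $\|\widehat T\|(B)=\int\|\curr{\gamma'}\|(B)\,\d\eta'$, while $|\dot\gamma|=1$ a.e.\ passes to $\ell(\gamma')=\mathcal{L}^1(\mathrm{dom}(\gamma'))$ for $\eta'$-a.e.\ $\gamma'$. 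Finally, since $F\subset j(\X)$, post-composition with the isometry $\psi:=(j|_{\X})^{-1}$ is an isometric embedding $\Gamma(F)\hookrightarrow\Gamma(\X)$; taking $\eta_n$ to be the push-forward of $\eta'$ under it and transporting the two displays through $\psi$ (using $\psi_*(j_*T)=T$, commutation of push-forward with integration of currents, $\psi_*\curr{\gamma'}=\curr{\psi\circ\gamma'}$, and that $\psi$ preserves lengths and domains) produces $T_n=\int\curr\gamma\,\d\eta_n$, the two mass identities, and $\eta_n(\Gamma(\X))=\eta'(\Gamma(F))\le\Mass(N)\le3\Mass(T_n)$ — exactly what the first reduction requires.

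The substantial analytic input, Theorem~\ref{theorem:main_alberti_marchese} (itself resting on the strict geodesic approximation of Section~\ref{sec:deformations} and the one-dimensional flat chain conjecture), is already in hand, so the point of this proof that requires care is the interplay between closedness of the restriction set and compactness of domains of curve fragments. One must push $T$ into an affine hyperplane so that the structure theorem returns a \emph{closed} restriction set $F=\supp(\widehat T)$, and one must first reduce to compactly supported $T$ so that $F$ is in fact \emph{compact}; this is precisely the hypothesis under which Lemma~\ref{lemma:meas_restriction_frag} guarantees that restricting the Paolini--Stepanov curves $\gamma$ to $\gamma^{-1}(F)$ yields genuine curve fragments, Borel-measurably in $\gamma$. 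The remaining steps — tightness, summing the pieces, isometric transport, and interchanging sum and integral — are routine.
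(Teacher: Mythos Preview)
Your proof is correct and follows the same two-step strategy as the paper: embed into a Banach space so that the current sits in an affine hyperplane, invoke Theorem~\ref{theorem:main_alberti_marchese} to realise it as $N\restr F$ with $F$ closed, decompose $N$ via Theorem~\ref{thm:Paolini_Stepanov}, restrict the curves to $F$ using Lemma~\ref{lemma:meas_restriction_frag}, and transport back via the isometry.

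The one organisational difference is your preliminary reduction to compactly supported $T$. The paper does not do this: it applies Theorem~\ref{theorem:main_alberti_marchese} directly (tacitly with $C=\B$, though the theorem is stated only for bounded $C$; the proof in fact goes through unchanged when $C=\B$), and then, since $\supp(T')$ need not be compact, uses the full $\sigma$-compact version of Lemma~\ref{lemma:meas_restriction_frag} together with the observation $\|T'\|(F\setminus E)=\|N\|(F\setminus E)=0$ to pass from the $\sigma$-compact concentration set $E$ to its closure $F$. Your reduction trades that step for a decomposition $T=\sum_nT_n$ and a summation at the end; in exchange you get $F=\supp(\widehat T)$ genuinely compact, so the hypotheses of Theorem~\ref{theorem:main_alberti_marchese} are met verbatim and Lemma~\ref{lemma:meas_restriction_frag} is invoked only in the trivial case $E=F$ compact. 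Both routes are clean; yours is slightly more self-contained. One small point: when extending the restriction map to curves disjoint from $F$, send them to a constant (single-point) fragment rather than the ``empty fragment'', so that the target point lies unambiguously in $\Gamma(F)$ and still contributes zero to all the integrals; equivalently, push forward only $\eta^0\restr{\Gamma(C,F)}$, which is what the paper does.
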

\begin{remark}\label{rmk:mass_vs_mass_on_curves}
Equations \eqref{eq:main_result_representation}
implies that, for $B\subset\X$ Borel, it holds
\begin{equation}\label{eq:mass_vs_mass_on_curves}
\|T\|(B)=\int \int_{\gamma^{-1}(B)}|\dot{\gamma}_t|\,\d t\,\d\eta(\gamma)=\int\gamma_*\mathcal{L}^1(B)\,\d\eta(\gamma).
\end{equation}
Indeed, from the first equality of \eqref{eq:main_result_representation}
(and the `in particular' part of the statement), $\|T\|(B)$
is always bounded above by the right-hand side of \eqref{eq:mass_vs_mass_on_curves}
and, if the inequality was strict, summing $\|T\|(B)$ and $\|T\|(\X\setminus B)$
we would contradict the second equality in \eqref{eq:main_result_representation}.
\end{remark}

\begin{proof}
Let $\iota\colon\X\to\mathbb{B}$ be an isometric embedding into a separable Banach space $\mathbb{B}$ and consider $T':=\iota_*T\in\cM_1(\mathbb{B})$.
We may suppose ${\rm supp}(T')$ to be contained in a hyperplane in $\B$.
Fix $\varepsilon>0$.
By Theorem \ref{theorem:main_alberti_marchese}, there is a boundaryless normal current $N\in\cN_1(\mathbb{B})$
with $N\restr{\mathrm{supp}(T')}=T'$ and $\Mass(N)\leq2\Mass(T)+\varepsilon$.
Let $\eta'\in\mathcal{M}(\Lip_1([0,1],\B))$
denote the finite measure given by \cref{thm:Paolini_Stepanov} applied to $N$.
Note that it is concentrated on Lipschitz curves satisfying $|\dot{\gamma}_t|=1$ a.e.
By inner regularity of $\|T'\|$, there is a $\sigma$-compact set $E\subset{\rm supp}(T')$ on which $\|T'\|$ is concentrated.
Hence, setting $F:=\overline{E}\subset{\rm supp}(T)$, we have
\begin{equation}
\label{eq:application_albmarchese}
    T' = N\restr{F}
\end{equation}
and therefore
\begin{equation}
\label{eq:not_charging_FminusE}
    0=\|T'\|(F\setminus E)=\|N\|(F\setminus E)=\int \gamma_*\mathcal{L}^1(F\setminus E)\,\d\eta'(\gamma),
\end{equation}
and in particular $\gamma_*\mathcal{L}^1(F\setminus E)=0$ for $\eta'$-a.e.\ $\gamma$.
Recall that $\Lip_1([0,1],\X)$ is topologised by
the inclusion $\Lip_1([0,1],\X)\subset\Gamma(\X)$; see Remark \ref{rem:topologies_agree}.
Hence, denoting with $\Phi\colon\Gamma(\mathbb{B},E)\to\Gamma(\F)$ the Borel map given by
\cref{lemma:meas_restriction_frag}, we have for $(f,\pi)\in D^1(\B)$
\begin{align*}
    T'(f,\pi)&\stackrel{\eqref{eq:application_albmarchese}}{=}N(\chi_Ff,\pi)=\int\curr{\gamma}(\chi_Ff,\pi)\,\d\eta'(\gamma)
    \stackrel{\eqref{eq:not_charging_FminusE}}{=}\int_{\Gamma(\mathbb{B},E)}\curr{\Phi(\gamma)}(f,\pi)\,\d\eta'(\gamma), \\
    \Mass(T')&\stackrel{\eqref{eq:application_albmarchese}}{=}\|N\|(F)=\int \gamma_*\mathcal{L}^1(F)\,\d t\d\eta'(\gamma)
    \stackrel{\eqref{eq:not_charging_FminusE}}{=}\int_{\Gamma(\mathbb{B},E)} \Phi(\gamma)_*\mathcal{L}^1(F)\,\d\eta'(\gamma).
\end{align*}
Now, let $\tilde{\eta}\in\mathcal{M}(\Gamma(F))$ denote the pushforward under $\Phi$
of the restriction of $\eta'$ to the Borel set $\Gamma(\mathbb{B},E)$. Therefore,
\begin{equation}
    \label{eq:partial_result_representation}
    T'(f,\pi)=\int_{\Gamma(F)}\curr{\gamma}(f,\pi)\,\d \tilde{\eta}(\gamma)\quad\text{and}\quad\Mass(T')=\int_{\Gamma(F)} \gamma_*\mathcal{L}^1(F)\,\d \tilde{\eta}(\gamma)
\end{equation}
for every $(f,\pi)\in D^1(\X)$.
The measure $\tilde{\eta}$ is not yet the measure we need, since it provides a representation of $T'$ instead of $T$.\par
Let $\mathbf{p}\colon \Gamma(\iota(\X)) \to \Gamma(\X)$ be the isometry defined as $\mathbf{p}(\iota\circ \gamma):=\gamma$, which is well-defined because $\iota\colon \X\to\iota(\X)$ is a surjective isometry. Similarly, we define the isometry $p\colon \iota(\X)\to \X$ as $p(\iota(x)):=x$.
Since $\tilde{\eta}\in\mathcal{M}(\Gamma(F))\subset\mathcal{M}(\Gamma(\iota(\X)))$
and $\mathbf{p}$ is Borel, we can set $\eta:=\mathbf{p}_*\tilde{\eta}\in \mathcal{M}(\Gamma(\X))$. Moreover, it follows by the very definitions of $p$ and $\mathbf{p}$ that
\begin{equation}
\label{eq:swap_p_with_boldp}
    (\mathbf{p}(\gamma))_t=p(\gamma_t)\quad \text{for every }t \in {\rm dom}(\gamma).
\end{equation}

By \cref{lemma:restrictions_of_currents}, we may regard $T'$ as a current on the metric space $\iota(\X)\subset \B$
and therefore $T=p_*T'$.
Fix $(f,\pi)\in D^1(\X)$.
Given $\gamma \in \Gamma(\X)$, we define $G\colon\Gamma(\X)\to\R$ as $G(\gamma):=\int_{{\rm dom}(\gamma)} (f \circ \gamma)_t (\pi \circ \gamma)'_t\,\d t$,
which is Borel measurable by \cref{lemma:meas_frag_curr}.
We compute
\begin{equation*}
\begin{aligned}
    T(f,\pi)&=p_* T'(f,\pi)=T'(f \circ p, \pi \circ p)
    \stackrel{\eqref{eq:partial_result_representation}}{=}
    \int \int_{{\rm dom}(\gamma)} (f \circ p \circ \gamma)(t) (\pi \circ p \circ \gamma)'(t)\,\d t\,\d \tilde{\eta}(\gamma)\\
    &\stackrel{\eqref{eq:swap_p_with_boldp}}{=}\int \int_{{\rm dom}(\gamma)} (f \circ \mathbf{p}(\gamma))_t (\pi \circ \mathbf{p}(\gamma))'_t\,\d t\,\d \tilde{\eta}(\gamma)=\int G\circ \mathbf{p}\,\d \tilde{\eta}=\int G\,\d \eta,\\
\end{aligned}
\end{equation*}
which proves the first identity in \eqref{eq:main_result_representation}.
The second one follows from
\begin{equation*}
    \Mass(T)=\Mass(T')
    \stackrel{\eqref{eq:application_albmarchese}}{=}
    \int\mathcal{L}^1({\rm dom}(\gamma))\,\d\tilde{\eta}(\gamma)
    =
    \int\mathcal{L}^1\big({\rm dom}(\mathbf{p}(\gamma))\big)\,\d\tilde{\eta}(\gamma)
    =\int \mathcal{L}^1({\rm dom}(\gamma))\,\d\eta(\gamma),
\end{equation*}
where the last equality from the definition of $\eta$.
\end{proof}

Given a Radon measure $\mu$ on a complete and separable metric space $(\X,\sfd)$,
we let $\mathfrak{X}(\mu)$ denote the space ($L^\infty(\mu)$-normed $L^\infty(\mu)$-module)
of Weaver derivations;
see \cite{WeaverBook, Schioppa_derivations, Schioppa_currents}.
It is well-known that there is a very close connection between Weaver derivations
and $1$-currents; see \cite{Schioppa_currents}, where this is developed also for $k$-currents with $k\geq 2$.
Moreover, in \cite{Schioppa_derivations, Schioppa_currents}, it is shown that
Weaver derivations admit a representation in terms of derivatives along curve fragments,
albeit under some finite dimensionality assumptions.
It follows from \cref{thm:representation_currents} that such assumptions
can be lifted, see \cref{rmk:AR_surj_in_Der}. \par
We combine \cref{thm:representation_currents} with a disintegration theorem
\cite[Theorem 5.3.1]{AGS_gradient_flows} to obtain a pointwise representation
of Weaver derivations as a superposition of partial derivatives along curve fragments.

\begin{corollary}\label{corol:rep_derivations}
Let $(\X,\sfd,\mu)$ be a complete and separable metric measure space 
with $\mu$ nonnegative $\sigma$-finite Borel measure
and let $D \in \mathfrak{X}(\mu)$. Then there exists
a collection of Borel measures $\{\eta^x\}_{x\in\X}$ on $\overline{\Gamma}(\X)$
with the following properties.
The measure $\eta^x$ concentrated on $e^{-1}\{x\}$ for $\mu$-a.e.\ $x\in\X$,
$x\mapsto\eta^x(B)$ is Borel measurable for each Borel $B\subset\overline{\Gamma}(\X)$,
and
for any Lipschitz $f\colon\X\to\R$
it holds
\begin{align*}
D f(x)
&=\int (f\circ \gamma)'_t\,\d \eta^x(t,\gamma),
&
|D|(x)
&=\int|\dot{\gamma}_t|\,\d\eta^x(t,\gamma)=\eta^x(\overline{\Gamma}(\X)),
\end{align*}
for $\mu$-a.e.\ $x\in\X$.
\end{corollary}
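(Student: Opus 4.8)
The plan is to associate to $D$ a metric $1$-current, apply the representation theorem \ref{thm:representation_currents} to it, and then disintegrate the resulting measure on curve fragments over $\X$ along the evaluation map $e$. Since the sets $A_n$ of a Borel partition $\X=\bigsqcup_n A_n$ into pieces of finite $\mu$-measure carry the restricted derivations $\chi_{A_n}D$, and all the conclusions localise along such a partition, I would first reduce to the case $\mu(\X)<\infty$. Then $|D|\in L^\infty(\mu)\subset L^1(\mu)$, and $T(f,\pi):=\int f\,D\pi\,\d\mu$ defines an element of $\cM_1(\X)$: multilinearity is clear, the locality axiom follows from locality of Weaver derivations, joint continuity from their weak$^*$ continuity together with $f\in L^1(\mu)$, and $|T(f,\pi)|\le\LIP(\pi)\int|f|\,|D|\,\d\mu$ gives the mass bound $\|T\|\le|D|\,\mu$.

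Applying Theorem \ref{thm:representation_currents} and Remark \ref{rmk:mass_vs_mass_on_curves} to $T$ produces $\eta\in\mathcal{M}(\Gamma(\X))$ with $T(f,\pi)=\int\curr{\gamma}(f,\pi)\,\d\eta(\gamma)$, $\Mass(T)=\int\mathcal{L}^1(\mathrm{dom}(\gamma))\,\d\eta(\gamma)$, $\|T\|(B)=\int\gamma_*\mathcal{L}^1(B)\,\d\eta(\gamma)$ for Borel $B\subset\X$, and $|\dot\gamma_t|=1$ for $\mathcal{L}^1$-a.e.\ $t$ and $\eta$-a.e.\ $\gamma$. I would then lift $\eta$ to a finite Borel measure $\bar\eta$ on $\overline{\Gamma}(\X)$ by setting $\int h\,\d\bar\eta:=\int\int_{\mathrm{dom}(\gamma)}h(t,\gamma)\,\d t\,\d\eta(\gamma)$ for nonnegative Borel $h$; finiteness is $\bar\eta(\overline{\Gamma}(\X))=\Mass(T)$, and well-definedness uses the measurability lemmas \ref{lemma:measurability_enhanced_frag} and \ref{lemma:meas_frag_curr}. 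Two facts follow: $e_*\bar\eta(B)=\int\mathcal{L}^1(\gamma^{-1}(B))\,\d\eta(\gamma)=\int\gamma_*\mathcal{L}^1(B)\,\d\eta(\gamma)=\|T\|(B)$, i.e.\ $e_*\bar\eta=\|T\|$; and $|\dot\gamma_t|=1$ holds $\bar\eta$-a.e. Moreover, unwinding the definition of $\curr{\gamma}$, for all $(f,\pi)\in D^1(\X)$,
\[
\int f\,D\pi\,\d\mu=\int_{\overline{\Gamma}(\X)}(f\circ e)(t,\gamma)\,(\pi\circ\gamma)_t'\,\d\bar\eta(t,\gamma),
\]
where $(\pi\circ\gamma)_t'$ is the Borel function of Lemma \ref{lemma:measurability_enhanced_frag}, defined $\bar\eta$-a.e.\ and satisfying $|(\pi\circ\gamma)_t'|\le\LIP(\pi)\,|\dot\gamma_t|$ $\bar\eta$-a.e.

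Next I would disintegrate $\bar\eta$ along $e\colon\overline{\Gamma}(\X)\to\X$ with respect to $\nu:=e_*\bar\eta=\|T\|$, using \cite[Theorem 5.3.1]{AGS_gradient_flows}: this gives a Borel family $\{\beta^x\}_{x\in\X}$ of probability measures with $\beta^x$ concentrated on $e^{-1}\{x\}$ for $\nu$-a.e.\ $x$ and $\bar\eta=\int\beta^x\,\d\nu(x)$; the $\bar\eta$-a.e.\ identity $|\dot\gamma_t|=1$ passes to $\beta^x$ for $\nu$-a.e.\ $x$. As $\nu\le|D|\,\mu\ll\mu$, write $\nu=\rho\,\mu$ with $\rho\in L^1(\mu)$, $\rho\le|D|$ $\mu$-a.e., and set $\eta^x:=\rho(x)\,\beta^x$ (the zero measure where $\rho(x)=0$). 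Substituting the disintegration into the displayed identity and using $f\circ e\equiv f(x)$ on $e^{-1}\{x\}$ gives, for every fixed $f\in\Lip_b(\X)$ and $\pi\in\Lip(\X)$,
\[
\int f\,D\pi\,\d\mu=\int_\X f(x)\left(\int(\pi\circ\gamma)_t'\,\d\eta^x(t,\gamma)\right)\d\mu(x);
\]
as $\Lip_b(\X)$ is measure-determining on the separable metric space $\X$ and both sides are finite signed measures in $f$, we obtain $D\pi(x)=\int(\pi\circ\gamma)_t'\,\d\eta^x(t,\gamma)$ for $\mu$-a.e.\ $x$, for each fixed $\pi$. Since $\int|\dot\gamma_t|\,\d\eta^x=\rho(x)$ for $\mu$-a.e.\ $x$, taking absolute values and using $|(\pi\circ\gamma)_t'|\le\LIP(\pi)|\dot\gamma_t|$ yields $|D\pi|\le\LIP(\pi)\,\rho$ $\mu$-a.e.\ for every $\pi$; by the minimality property of the pointwise norm $|D|$ (see \cite{WeaverBook,Schioppa_derivations}) this forces $|D|\le\rho$, hence $\rho=|D|$ $\mu$-a.e. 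Consequently $\eta^x(\overline{\Gamma}(\X))=\rho(x)=|D|(x)$ and $\int|\dot\gamma_t|\,\d\eta^x=|D|(x)$ for $\mu$-a.e.\ $x$, $x\mapsto\eta^x(B)=\rho(x)\beta^x(B)$ is Borel, and $\eta^x$ is concentrated on $e^{-1}\{x\}$ for $\mu$-a.e.\ $x$; gluing the families obtained on the pieces $A_n$ finishes the proof.

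The main obstacle is the passage from the single integral identity over $\overline{\Gamma}(\X)$ to the pointwise statement at $\mu$-a.e.\ $x$. This needs, first, that the disintegration theorem applies, i.e.\ that $\overline{\Gamma}(\X)\subset[0,1]\times\Gamma(\X)$ is a suitably standard (Souslin) measurable space — which follows from separability of $\X$, hence of $\Gamma(\X)$; and second, the correct identification of the density $\rho=\d(e_*\bar\eta)/\d\mu$ with the pointwise norm $|D|$, where the minimality characterisation of $|D|$ is essential. Note that the statement only asks, for each fixed $f$, that $Df(x)=\int(f\circ\gamma)_t'\,\d\eta^x$ hold $\mu$-a.e.; one never needs a single exceptional set valid for all $f$ simultaneously, which is precisely what makes the measure-determining argument sufficient.
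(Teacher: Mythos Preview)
Your argument is correct and follows essentially the same route as the paper: build the current $T$ from $D$, apply Theorem~\ref{thm:representation_currents}, lift to $\bar\eta$ on $\overline{\Gamma}(\X)$, disintegrate along $e$, and rescale the fibre measures. The two substantive differences are cosmetic. First, the paper reduces to finite mass via a weight $w$ with $\mu_0=w\mu$ a probability (and later undoes this by testing against $\tfrac{1}{w}\chi_B$), whereas you partition $\X$ into finite-measure pieces and glue; both work. Second, and more interestingly, the paper invokes Schioppa's identity $\|T\|=|D|\mu_0$ directly (so $\rho=|D|$ is known from the outset), while you only use the easy inequality $\|T\|\le|D|\mu$ and then recover $\rho=|D|$ a posteriori from the minimality characterisation of the local norm. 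Your route is slightly more self-contained; the paper's is shorter.
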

\begin{proof}
Since $\mu$ is $\sigma$-finite, there is a Borel function $w\colon\X\to(0,\infty)$
such that $\mu_0:=w\mu$ is a probability measure.
Observe that $D\in\mathfrak{X}(\mu_0)$ and 
let $T$ denote the metric $1$-current induced by $D$ and $\mu_0$, i.e.\
$T(f,\pi):=\int fD\pi\,\d\mu_0$ for $(f,\pi)\in D^1(\X)$.
See \cite[Theorem 3.7]{Schioppa_currents} for a proof that it is a current and
$\|T\|=|D|\mu_0$.
Let $\eta\in\mathcal{M}(\Gamma(\X))$ be the measure given by \cref{thm:representation_currents}
and define the following finite Borel measure on $\overline{\Gamma}(\X)$
\begin{equation*}
    \overline{\eta}(B):=\int\int_{{\rm dom}(\gamma)}\chi_B(t,\gamma)\,\d t\,\d\eta(\gamma)
\end{equation*}
for $B\subset\overline{\Gamma}(\X)$ Borel.
Since $\eta$ is concentrated on curve fragments with metric derivative $1$ a.e.
(and using \cref{lemma:measurability_enhanced_frag}), we see that
$\{(t,\gamma)\in\mathit{MD}\colon |\dot{\gamma}_t|=1\}$
is a Borel set of full $\overline{\eta}$-measure; therefore
\begin{equation}\label{eq:rep_derivations_1}
    \overline{\eta}(B)=\int\int\chi_B(t,\gamma)|\dot{\gamma}_t|\,\d t\,\d\eta(\gamma)
\end{equation}
for $B\subset\overline{\Gamma}(\X)$ Borel
and moreover $e_*\overline{\eta}=\|T\|=|D|\mu_0$.
We apply \cite[Theorem 5.3.1]{AGS_gradient_flows} to obtain
a measurable family of probability measures $\{\tilde{\eta}^x\}_{x\in \X}$ on $\overline{\Gamma}(\X)$,
which disintegrate $\overline\eta$ w.r.t.\ the map $e$.
More explicitly, $\tilde{\eta}^x(\overline{\Gamma}(\X)\setminus e^{-1}\{x\})=0$ for $\mu_0$-a.e.\ $x$, and, for $B\subset\overline{\Gamma}(\X)$ Borel, $x\mapsto\tilde{\eta}^x(B)$ is Borel
and
\begin{equation*}
    \overline{\eta}(B)=\int \tilde{\eta}^x(B)|D|(x)\,\d\mu_0(x).
\end{equation*}
We then set $\eta^x:=|D|(x)\tilde{\eta}^x$,
for a fixed Borel representative of $|D|\in L^\infty(\mu_0)$.
Let $f\colon\X\to\R$ be Lipschitz and observe
that the Borel set $D(f)$, defined as in \cref{lemma:measurability_enhanced_frag},
has full $\overline{\eta}$-measure.
Let $n\in\N$ and $B\subset\{w>1/n\}$ be a Borel set.
Then $\tfrac{1}{w}\chi_B\in \mathcal{B}^\infty(\X)$ and we have
\begin{align*}
    \int_{B}Df\,\d\mu&=\int\int_{\mathrm{dom}(\gamma)}(\tfrac{1}{w}\chi_B)\circ\gamma_t(f\circ\gamma)_t'\,\d t\eta(\gamma) =\int \int_{e^{-1}\{x\}}(\tfrac{1}{w}\chi_B)(x)(f\circ\gamma)_t'\,\d\eta^x(t,\gamma)\d\mu_0(x)\\
    &=\int_B\int(f\circ\gamma)_t'\,\d\eta^x(t,\gamma)\d\mu(x)
\end{align*}
Since $B$ was arbitrary, this gives the first equality of the thesis
at $\mu$-a.e.\ $x\in \{w>1/n\}$.
Taking the union over $n\in\N$ and recalling that $w>0$ everywhere
we see that such equality holds at $\mu$-a.e.\ $x\in\X$.
The second one is obtained similarly. Indeed, if $w$ is bounded away from $0$ on the Borel set $B$
\begin{equation*}
    \int_B|D|\d\mu\stackrel{\eqref{eq:mass_vs_mass_on_curves}}{=}\int\int(\tfrac{1}{w}\chi_B)\circ\gamma_t\,\d t\d\eta(\gamma)=\int\int_{e^{-1}\{x\}}\chi_B(x)\d\eta^x(t,\gamma)\d\mu(x)
\end{equation*}
and, since $|\dot{\gamma}_t|=1$ for $\overline{\eta}$-a.e.\ $(t,\gamma)\in\overline{\Gamma}(\X)$, we also have
\begin{equation*}
\int_B|D|\d\mu=\int\int(\tfrac{1}{w}\chi_B)\circ\gamma_t|\dot{\gamma}_t|\,\d t\d\eta(\gamma)
=\int\int_{e^{-1}\{x\}}\chi_B(x)|\dot{\gamma}_t|\,\d\eta^x(t,\gamma)\d\mu(x).
\end{equation*}
By arbitrariness of $B$, we conclude.
\end{proof}
\begin{remark}
In the proof of \cref{corol:rep_derivations}, we
construct an Alberti representation $(\eta,\{\tfrac{1}{w}\gamma_*|\dot{\gamma}_t|\mathcal{L}^1\})$ of $|D|\mu$ inducing the given derivation $D\in\mathfrak{X}(\mu)$;
see \cite[Definition 2.2]{Bate2015_Str_of_meas}
for the definition of Alberti representation\footnote{Note that, contrary to \cite{Bate2015_Str_of_meas}, we do not require $\eta$ to be concentrated on biLipschitz curve fragments.}
and \cref{rmk:AR_surj_in_Der} for further comments on derivations.
Allowing $\eta$ to be $\sigma$-finite, we can take $\gamma_*(|\dot{\gamma}_t|\mathcal{L}^1)$ in place of $\tfrac{1}{w}\gamma_*|\dot{\gamma}_t|\mathcal{L}^1$; this is proven applying
\cref{thm:representation_currents} to a countable decomposition of $\mu$.
\end{remark}
\begin{remark}\label{rmk:AR_surj_in_Der}
We stress that, in \cref{corol:rep_derivations}, the way we produce a derivation from an Alberti representation
differs slightly from \cite{Schioppa_derivations}.
However, it seems likely that, using \cref{corol:rep_derivations},
one should be able to prove surjectivity of Schioppa's $\mathrm{Der}$ operator
(see \cite[Theorem 3.11]{Schioppa_derivations})
under the same assumptions as \cref{corol:rep_derivations}.
\end{remark}
\appendix
\section{Integrals of currents are currents}
\label{appendix:A}
\begin{lemma}\label{lemma:weak_star_sep_of_Lip}
Let $X$ be a separable metric space.
Then there is a countable set $D\subset \mathrm{Lip}(X)$ with the following property.
For any Lipschitz $f\colon X\to\R$ there is a sequence $f_n\in D$ converging pointwise to $f$ and satisfying $\mathrm{Lip}(f_n)\leq\Lip(f)$ for every $n$.
\end{lemma}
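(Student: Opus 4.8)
The plan is to take $D$ to consist of McShane-type extensions with rational data. Fix a countable dense set $\{x_i\}_{i\in\N}\subseteq X$ and set
\[
D:=\Bigl\{\,x\mapsto\min_{i\in S}\bigl(q_i+\lambda\,\sfd(x,x_i)\bigr)\ :\ S\subseteq\N\text{ finite},\ q_i\in\Q\ (i\in S),\ \lambda\in\Q\cap[0,\infty)\,\Bigr\}.
\]
Every element of $D$ is $\lambda$-Lipschitz for the corresponding $\lambda$, so $D\subseteq\Lip(X)$, and $D$ is countable; note that the rational constants lie in $D$ (take $\lambda=0$). Given $f\in\Lip(X)$, write $L:=\Lip(f)$. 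If $L=0$ then $f\equiv c$ and $f_n:=q_n$ with $q_n\in\Q$, $q_n\to c$, works, so assume $L>0$.

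The point where care is needed is that the natural candidate $g_n(x):=\min_{1\le i\le n}\bigl(f(x_i)+L\,\sfd(x,x_i)\bigr)$ is $L$-Lipschitz but typically does not lie in $D$, because the slope $L$ need not be rational, and simply replacing $L$ by a rational $\lambda_n<L$ damages the lower control of the resulting minimum at points far from $x_1,\dots,x_n$. To quantify this, I would introduce the fixed nondecreasing sequence $M_n:=\max_{1\le i\le n}\sfd(x_1,x_i)$ and choose $\lambda_n\in\Q\cap[0,L]$ with $L-\lambda_n\le\frac{1}{n(M_n+1)}$ (such $\lambda_n$ exists by density of $\Q$, and automatically $\lambda_n\to L$), together with $q_i^{(n)}\in\Q$ satisfying $|q_i^{(n)}-f(x_i)|\le 1/n$ for $1\le i\le n$. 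Then
\[
f_n:=\min_{1\le i\le n}\bigl(q_i^{(n)}+\lambda_n\,\sfd(\cdot,x_i)\bigr)\in D,\qquad \Lip(f_n)\le\lambda_n\le L=\Lip(f).
\]

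It remains to verify $f_n\to f$ pointwise. Fix $x\in X$. For the lower bound, for each $1\le i\le n$ the $L$-Lipschitz bound on $f$ gives $q_i^{(n)}+\lambda_n\sfd(x,x_i)\ge f(x)-\tfrac1n-(L-\lambda_n)\sfd(x,x_i)$, and since $\sfd(x,x_i)\le\sfd(x,x_1)+M_n$ for $i\le n$, the choice of $\lambda_n$ bounds each term below by $f(x)-\tfrac1n-\tfrac{1+\sfd(x,x_1)}{n}$; taking the minimum over $i$, $f_n(x)\ge f(x)-\tfrac{2+\sfd(x,x_1)}{n}$. For the upper bound, given $\varepsilon>0$ pick (by density) a fixed index $i_0$ with $\sfd(x,x_{i_0})<\varepsilon$; then for $n\ge i_0$ one has $f_n(x)\le q_{i_0}^{(n)}+\lambda_n\sfd(x,x_{i_0})\le f(x)+2L\varepsilon+\tfrac1n$, whence $\limsup_n f_n(x)\le f(x)+2L\varepsilon$, and letting $\varepsilon\to0$ gives $\limsup_n f_n(x)\le f(x)$. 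Combining the two bounds yields $f_n(x)\to f(x)$, completing the proof. The main obstacle is precisely the coupling in the second paragraph: the rational slopes $\lambda_n$ must approach $L$ fast enough relative to the (possibly unbounded) growth of $M_n$ so that the error $(L-\lambda_n)\sfd(x,x_i)$ still vanishes at each fixed $x$; if $X$ is bounded this is automatic, but in general it forces the explicit choice $L-\lambda_n\le\frac{1}{n(M_n+1)}$ rather than merely $\lambda_n\to L$.
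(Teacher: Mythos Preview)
Your proof is correct and follows essentially the same McShane-extension idea as the paper: build $D$ from extensions of rational data on finite subsets of a fixed countable dense set, then approximate $f$ by such extensions with slope at most $\Lip(f)$. The only difference is that the paper allows the slope in the McShane formula to be the (possibly irrational) Lipschitz constant of the rational data on the finite set, which sidesteps your coupling $L-\lambda_n\le\frac{1}{n(M_n+1)}$ at the cost of having to choose the rational values $q_i$ so that the discrete Lipschitz constant does not exceed $L$; conversely, your rational-slope variant makes $D$ more explicit but forces the careful rate control you carry out, and your direct upper/lower bound argument replaces the paper's appeal to equicontinuity.
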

\begin{proof}
Let $S\subset X$ be countable and dense. Let $F\subset S$ be finite and let $D_F$ be the countable set of functions $F\to \Q$.
For each $f\in D_F$, fix an extension $\tilde{f}\colon X\to\R$ with $\Lip(\tilde{f})=\Lip(f)$.
For instance, one can set
\begin{equation*}
    \tilde{f}(x):=\min_{y\in F}f(y)+\Lip(f)d(x,y),\qquad x\in X.
\end{equation*}
Let $\tilde{D}_F:=\{\tilde{f}\colon f\in D_F\}$, $D:=\cup\{\tilde{D}_F\colon F\subset S\text{ finite}\}$ and note that they are both countable. We now show that $D$ is dense.
Let $f\colon X\to\R$ be Lipschitz and fix an enumeration $\{x_n\}_n$ of $S$.
It is easy to see from the definitions that for any $n\in\N$, there is $f_n\in D$ with $\Lip(f_n)\leq\Lip(f)$, satisfying
\begin{equation*}
\max_{1\leq i\leq n}|f_n(x_i)-f(x_i)|\leq 1/n.
\end{equation*}
It is then clear that $f_n$ converges to $f$ pointwisely on $S$.
But then, by equicontinuity, $f_n$ converges to $f$ on $X$.
\end{proof}
\begin{lemma}\label{lemma:mass_countable_sup}
Let $X$ be a complete and separable metric space.
Then there are countable sets $D\subset \Lip(X)$, $\mathcal{A}\subset\mathcal{B}(X)$, such
that for any $k\in\N_0$, $T\in\cM_k(X)$, and Borel set $B\subset X$ it holds
\begin{equation}\label{eq:mass_countable_sup}
\|T\|(B)=\sup
\sum_{i=1}^nT(\chi_{A_i\cap B},\pi_1^i,\dots,\pi_k^i),
\end{equation}
where the supremum is taken over all disjoint $\{A_i\}\subset \mathcal{A}$ and $\{\pi_j^i\}\subset D$ with $\Lip(\pi_j^i)\leq 1$.
\end{lemma}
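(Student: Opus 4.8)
The plan is to fix, once and for all, countable data that is independent of $k$, $T$, and $B$, and then to squeeze the right-hand side of \eqref{eq:mass_countable_sup} between $\|T\|(B)$ and itself. For the functions, take $D\subset\Lip(X)$ to be the countable set furnished by \cref{lemma:weak_star_sep_of_Lip}. For the sets, fix a countable dense $S\subset X$, put $\mathcal{A}_0:=\{B(s,q):s\in S,\ q\in\Q\cap(0,\infty)\}$, and let $\mathcal{A}$ be the Boolean algebra of subsets of $X$ generated by $\mathcal{A}_0$; this is countable, and, $X$ being separable, every open set is a countable union of members of $\mathcal{A}_0$, so $\sigma(\mathcal{A})=\mathcal{B}(X)$. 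Throughout write $\mu:=\|T\|$, a finite Borel measure, and recall that $T$ extends to a functional on $\mathcal{B}^\infty(X)\times\Lip(X)^k$ that is additive in its first argument and satisfies $|T(f,\pi_1,\dots,\pi_k)|\le\prod_i\LIP(\pi_i)\int|f|\,\d\mu$ (by \cref{prop:bound_current_*uppergradient}); in particular $|T(\chi_E-\chi_{E'},\pi)|\le\mu(E\triangle E')$ whenever all $\LIP(\pi_i)\le1$. Below I assume $k\ge1$; the case $k=0$ is a classical fact about finite signed measures.

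The first genuine step is the ``Borel'' analogue of \eqref{eq:mass_countable_sup}:
\[
\|T\|(B)=\sup\Bigl\{\,\sum_{i=1}^{n}T(\chi_{B_i},\pi_1^i,\dots,\pi_k^i)\,\Bigr\},
\]
with the supremum over finite disjoint Borel families $\{B_i\}$, $B_i\subset B$, and over $\pi_j^i\in\Lip(X)$ with $\LIP(\pi_j^i)\le1$. Inequality ``$\ge$'' is immediate from the mass bound and disjointness. For ``$\le$'', write $\nu'(B)$ for the right-hand side: additivity of $T$ in the first argument shows $\nu'$ is monotone and finitely additive, and an absolutely convergent double series (dominated by $\mu(X)<\infty$) upgrades this to countable additivity, so $\nu'$ is a finite Borel measure. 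Since $-\pi_1$ is again $1$-Lipschitz one gets $|T(\chi_E,\pi)|\le\nu'(E)$ for every Borel $E$ when all $\LIP(\pi_j)\le1$, whence $|T(f,\pi)|\le\prod_j\LIP(\pi_j)\int|f|\,\d\nu'$ first for simple $f$, and then, by uniform approximation of a bounded Borel $f$ by simple functions, for all $f\in\mathcal{B}^\infty(X)$. Minimality of $\|T\|$ then forces $\|T\|\le\nu'$, and combined with $\nu'\le\|T\|$ this proves the displayed formula.

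The second step upgrades the displayed formula to the countable data $D,\mathcal{A}$. To put the functions into $D$: fix a finite disjoint Borel family $\{B_i\}_{i=1}^n$ with $B_i\subset B$; \cref{lemma:weak_star_sep_of_Lip} provides $\pi_{j,m}^i\in D$ with $\LIP(\pi_{j,m}^i)\le1$ and $\pi_{j,m}^i\to\pi_j^i$ pointwise as $m\to\infty$, and the joint continuity axiom survives a bounded Borel first argument (approximate $\chi_{B_i}$ in $L^1(\mu)$ by elements of $\Lip_b(X)$ and use the mass bound), so $T(\chi_{B_i},\pi_{1,m}^i,\dots,\pi_{k,m}^i)\to T(\chi_{B_i},\pi_1^i,\dots,\pi_k^i)$; hence requiring $\pi_j^i\in D$ does not change the supremum. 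To put the sets into $\mathcal{A}$: now the $1$-Lipschitz $\pi_j^i\in D$ are fixed, and by density of $\mathcal{A}$ in $\mathcal{B}(X)$ for the pseudometric $(E,F)\mapsto\mu(E\triangle F)$ choose $A_i'\in\mathcal{A}$ with $\mu(A_i'\triangle B_i)$ as small as needed (the admissible size depending on the fixed $n$); then $A_i:=A_i'\setminus\bigcup_{j<i}A_j'\in\mathcal{A}$ are disjoint, the disjointness of the $B_i$ keeps $\mu(A_i\triangle B_i)$ small, and $(A_i\cap B)\triangle B_i\subset A_i\triangle B_i$ because $B_i\subset B$; consequently $\bigl|\sum_iT(\chi_{A_i\cap B},\pi^i)-\sum_iT(\chi_{B_i},\pi^i)\bigr|\le\sum_i\mu(A_i\triangle B_i)$ is arbitrarily small. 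This yields ``$\ge$'' in \eqref{eq:mass_countable_sup}; the reverse inequality is again the trivial direction, so \eqref{eq:mass_countable_sup} follows.

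The step I expect to require the most care is the disjointification inside $\mathcal{A}$: the accuracy of each approximant $A_i'$ must be chosen as a function of the fixed (but arbitrary) size $n$ of the family, so that the errors introduced by the successive subtractions $A_i'\setminus\bigcup_{j<i}A_j'$ do not accumulate. The remaining ingredients — the Borel mass formula, $L^1(\mu)$-density of $\Lip_b(X)$, and persistence of joint continuity under a bounded Borel first entry — are routine once \cref{prop:bound_current_*uppergradient} is invoked; the role of $D$ is purely to replace arbitrary $1$-Lipschitz test functions by ones drawn from a fixed countable pool.
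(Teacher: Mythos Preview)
Your proof is correct and follows essentially the same route as the paper: same choices of $D$ (via \cref{lemma:weak_star_sep_of_Lip}) and $\mathcal{A}$ (the algebra generated by a countable basis), same reduction to the ``Borel'' version of the formula, same passage to $D$ via joint continuity, and the same disjointification $A_i:=A_i'\setminus\bigcup_{j<i}A_j'$ inside $\mathcal{A}$. The only notable difference is that the paper quotes \cite[Proposition~2.7]{AK00} for the Borel mass formula, whereas you prove it from scratch by verifying that the supremum defines a finite Borel measure satisfying the mass bound and invoking minimality of $\|T\|$; this makes your argument slightly more self-contained but is otherwise the same strategy.
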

\begin{proof}
Let $\mathcal{A}$ be an algebra of sets generated by a countable topological basis of $X$; note that it is countable.
Let $D\subset \Lip(X)$ be given by \cref{lemma:weak_star_sep_of_Lip}.
We now show that the thesis holds with these choices of $\mathcal{A}$ and $D$.
Fix a Borel set $B\subset X$ and denote with $s$ the right-hand side of \cref{eq:mass_countable_sup}.
It is clear that $s\leq \|T\|(B)$. Fix $\varepsilon>0$.
By \cite[Proposition 2.7]{AK00}, there are countably many disjoint Borel sets $B_i$ with $\cup_iB_i=B$
and $1$-Lipschitz functions $\pi_j^i$ such that
\begin{equation}
\label{ak:realisation_of_mass}
\|T\|(B)-\varepsilon<\sum_iT(\chi_{B_i},\pi_1^i,\dots,\pi_k^i).
\end{equation}
For $n\in\N$ sufficiently large, we can replace the above sum over $i\in\N$ with a sum over $1\leq i\leq n$. Also, from the continuity axiom of metric currents and \cref{lemma:weak_star_sep_of_Lip},
we can assume $\pi_j^i\in D$.
It remains to show that we can approximate the sets $\{B_i\}_{i=1}^n$ with elements of $\mathcal{A}$.
Since $\mathcal{A}$ is an algebra generating $\mathcal{B}(X)$, we have
\begin{equation*}
    \|T\|(S)=\inf\left\{\sum_i\|T\|(A_i)\colon A_i\in\mathcal{A}, S\subset\bigcup_iA_i\right\},
\end{equation*}
for any Borel $S\subset X$. Applying the above to $B_i$, we find $A_i^\delta\in\mathcal{A}$ with $\|T\|(A_i^\delta \Delta B_i) <\delta$ for $1\leq i\leq n$.
Set $E_i^\delta:=A_i^\delta\setminus \cup_{j<i}A_j^\delta$ and observe that $E_i^\delta\in\mathcal{A}$, because $\mathcal{A}$ is an algebra.
Since $B_i$ are pairwise disjoint and contained in $B$, we have $\|T\|(E_i^\delta\Delta B_i)\rightarrow 0$ and $\|T\|((E_i^\delta\cap B)\Delta B_i)\rightarrow 0$ as $\delta\rightarrow 0$.
Together with \eqref{ak:realisation_of_mass}, this implies that
\begin{equation*}
    \|T\|(B)-\varepsilon<\sum_{i=1}^nT(\chi_{E_i^\delta\cap B},\pi_1^i,\dots,\pi_k^i)\leq s,
\end{equation*}
for $\delta>0$ sufficiently small.
Since $\varepsilon>0$ was arbitrary, we have the thesis.
\end{proof}
\begin{lemma}\label{lemma:meas_mass}
Let $X$ be a complete and separable metric space, $(\Omega,\mathcal{F})$ a measurable space, and $k\in\N_0$.
Let $\Omega\ni \omega\mapsto T_\omega\in \cM_k(X)$ be a map and suppose
$\Omega \ni \omega\mapsto T_\omega(f, \pi) \in \mathbb{R}$ is measurable for each $(f, \pi)\in D^k(X)$.
Then, for each $f\in\mathcal{B}^\infty(\X)$, $\pi\in\LIP(\X)^k$, and $B\subset X$ Borel, the maps $\Omega\ni\omega\mapsto T_\omega(f,\pi)$ and $\Omega \ni \omega\mapsto \|T_\omega\|(B)\in \mathbb{R}$ are measurable.
\end{lemma}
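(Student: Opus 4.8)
The plan is to bootstrap the measurability hypothesis, which is given only for tuples $(f,\pi)\in D^k(X)$, up to all pairs $(\chi_E,\pi)$ with $E\subset X$ Borel and $\pi\in\Lip(X)^k$ arbitrary, and then to read off both assertions. Throughout I will use the canonical extension of a metric $k$-current $T$ to bounded Borel functions in the first slot: from $|T(f,\pi)|\le\prod_i\LIP(\pi_i)\int|f|\,\d\|T\|$, the functional $f\mapsto T(f,\pi)$ extends uniquely from $\Lip_b(X)$ to a bounded linear functional on $L^1(\|T\|)\supset\mathcal{B}^\infty(X)$ obeying the same bound; in particular it is linear and continuous with respect to $L^1(\|T\|)$-convergence, and $\|T\|$ is a finite measure for each current $T$.

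The first and main step is to show, for a fixed $\pi\in\Lip(X)^k$, that $\omega\mapsto T_\omega(\chi_E,\pi)$ is measurable for every Borel set $E$, which I would do by a Dynkin $\pi$--$\lambda$ argument. Let $\mathcal{H}$ be the family of Borel $E$ with this property. For an open set $U$, the functions $g_n:=\min\{1,n\,\sfd(\cdot,X\setminus U)\}$ lie in $\Lip_b(X)$ and increase pointwise to $\chi_U$, so $g_n\to\chi_U$ in $L^1(\|T_\omega\|)$ for each $\omega$ by monotone convergence, whence $T_\omega(g_n,\pi)\to T_\omega(\chi_U,\pi)$; since each $\omega\mapsto T_\omega(g_n,\pi)$ is measurable by hypothesis, the pointwise limit is measurable and $U\in\mathcal{H}$. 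That $\mathcal{H}$ is a $\lambda$-system is then routine: $X\in\mathcal{H}$ because $\chi_X\equiv1$; for $A\subset B$ in $\mathcal{H}$ one has $\chi_{B\setminus A}=\chi_B-\chi_A$ and uses linearity in the first slot; for an increasing sequence $A_m\uparrow A$ in $\mathcal{H}$ one has $\chi_{A_m}\uparrow\chi_A$ and uses $L^1(\|T_\omega\|)$-continuity again. As the open sets form a $\pi$-system generating $\mathcal{B}(X)$, Dynkin's theorem yields $\mathcal{H}=\mathcal{B}(X)$.

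Given this, both conclusions follow quickly. For a general $f\in\mathcal{B}^\infty(X)$ and $\pi\in\Lip(X)^k$, write $f$ as a uniform limit of simple Borel functions $s_n=\sum_ic_i^n\chi_{E_i^n}$; then $\omega\mapsto T_\omega(s_n,\pi)=\sum_ic_i^n\,T_\omega(\chi_{E_i^n},\pi)$ is measurable by the previous step, and for each fixed $\omega$ one has $|T_\omega(s_n,\pi)-T_\omega(f,\pi)|\le\prod_i\LIP(\pi_i)\,\|s_n-f\|_\infty\,\|T_\omega\|(X)\to0$, so $\omega\mapsto T_\omega(f,\pi)$ is a pointwise limit of measurable maps, which proves the first assertion. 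For the mass, I would invoke Lemma \ref{lemma:mass_countable_sup} to obtain countable sets $D\subset\Lip(X)$ and $\mathcal{A}\subset\mathcal{B}(X)$ with
\[
\|T_\omega\|(B)=\sup\ \sum_{i=1}^n T_\omega(\chi_{A_i\cap B},\pi_1^i,\dots,\pi_k^i),
\]
the supremum over $n\in\N$, disjoint $A_i\in\mathcal{A}$, and $\pi_j^i\in D$ with $\LIP(\pi_j^i)\le1$. Each $A_i\cap B$ is Borel, so every one of these finite sums is measurable in $\omega$ by the first step; since $\mathcal{A}$ and $D$ are countable there are only countably many admissible choices, and a countable supremum of $\R$-valued measurable functions is measurable (finite-valued here, as $\|T_\omega\|(B)\le\|T_\omega\|(X)<\infty$). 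This yields the second assertion.

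I do not expect a genuine obstacle: the only delicate point is that the approximations in the first two steps converge only for each fixed $\omega$, with no rate uniform in $\omega$, but this is immaterial since measurability is stable under pointwise limits. Completeness and separability of $X$ enter solely through Lemma \ref{lemma:mass_countable_sup}, whose countable index set is precisely what turns the supremum defining $\|T_\omega\|(B)$ into a measurable function of $\omega$.
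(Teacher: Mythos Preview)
Your proof is correct and follows essentially the same approach as the paper's. The only difference is cosmetic: where you unpack the argument via Dynkin's $\pi$--$\lambda$ theorem on indicator functions followed by simple-function approximation, the paper compresses this into a single invocation of the functional monotone class theorem (the space of $f$ for which $\omega\mapsto T_\omega(f,\pi)$ is measurable is a vector space closed under equibounded pointwise limits and contains $\Lip_b(X)$, hence contains $\mathcal{B}^\infty(X)$); both then appeal to Lemma~\ref{lemma:mass_countable_sup} in exactly the same way for the mass.
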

\begin{proof}
The vector space of functions $f\colon X\to\R$ for which $\omega\mapsto T_\omega(f, \pi)$ is measurable for each $\pi\in\Lip(X)^k$ is closed under pointwise limits of equibounded sequences, and contains $\Lip_b(X)$. It therefore includes $\mathcal{B}^\infty(X)$.
Hence, by \cref{lemma:mass_countable_sup}, $\omega\mapsto \|T_\omega\|(B)$ can be written as the pointwise supremum of a countably many measurable functions.
\end{proof}
\begin{lemma}
\label{lemma:integral_of_currents_are_currents}
Let $X$ be a complete and separable metric space, $(\Omega,\mathcal{F},\mu)$ a measure space, and $k\in\N_0$. For every map $\Omega \ni \omega\mapsto T_\omega\in \cM_k(X)$ satisfying
\begin{itemize}
\item[(i)] $\omega\mapsto T_\omega(f, \pi)$ is measurable for each $(f,\pi)\in D^k(X)$;
\item[(ii)] $\int\|T_\omega\|(X)\,\d\mu(\omega)<\infty$.
\end{itemize}
we have the following.
The map $T\colon D^k(X)\to\R$ defined as  
\begin{equation*}
    T(f, \pi):=\int T_\omega(f, \pi)\,\d\mu(\omega)
\end{equation*}
is a metric $k$-current, which moreover satisfies
\begin{equation}
\label{eq:mass_estimate_integral}
    \|T\|(B)\leq\int\|T_\omega\|(B)\,\d\mu(\omega),
\end{equation}
for each $B\subset X$ Borel.
\end{lemma}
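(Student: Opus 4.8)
The plan is to read off each of the four axioms of a metric $k$-current for $T$ from the corresponding axiom for the individual $T_\omega$, using (i)--(ii) to interchange integration with limits, and then to obtain \eqref{eq:mass_estimate_integral} by showing that $\nu(B):=\int\|T_\omega\|(B)\,\d\mu(\omega)$ is a finite Borel measure validating the mass bound \eqref{eq:mass_bound_definition} for $T$. First I would check that $T$ is well-defined: for $(f,\pi)\in D^k(X)$ the integrand $\omega\mapsto T_\omega(f,\pi)$ is measurable by (i) and, by \eqref{eq:mass_bound_definition} applied to $T_\omega$, satisfies $|T_\omega(f,\pi)|\le\prod_{i=1}^k\LIP(\pi_i)\,\|f\|_\infty\,\|T_\omega\|(X)$, which is $\mu$-integrable by (ii). By Lemma \ref{lemma:meas_mass}, $\omega\mapsto\|T_\omega\|(B)$ is measurable for every Borel $B\subset X$, so $\nu$ is well-defined; it is countably additive by monotone convergence applied to $\|T_\omega\|\bigl(\bigsqcup_jB_j\bigr)=\sum_j\|T_\omega\|(B_j)$, and $\nu(X)=\int\|T_\omega\|(X)\,\d\mu(\omega)<\infty$ by (ii), so $\nu$ is a finite Borel measure.

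Multilinearity of $T$ is immediate from linearity of the integral. For locality, if some $\pi_i$ is constant on a neighbourhood of $\{f\neq 0\}$ then $T_\omega(f,\pi)=0$ for every $\omega$, hence $T(f,\pi)=0$. For joint continuity, fix $f\in\Lip_b(X)$ and take $\pi^n_i,\pi_i\in\Lip(X)$ with $M:=\sup_{i,n}\LIP(\pi^n_i)<\infty$ and $\pi^n_i\to\pi_i$ pointwise; then $T_\omega(f,\pi^n)\to T_\omega(f,\pi)$ for every $\omega$ by joint continuity of $T_\omega$, while $|T_\omega(f,\pi^n)|\le M^k\|f\|_\infty\|T_\omega\|(X)$ is a $\mu$-integrable majorant independent of $n$, so $T(f,\pi^n)\to T(f,\pi)$ by dominated convergence.

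For the finite-mass condition I would first record the layer-cake identity $\int_X|f|\,\d\nu=\int_\Omega\bigl(\int_X|f|\,\d\|T_\omega\|\bigr)\,\d\mu(\omega)$ for $f\in\mathcal{B}^\infty(X)$, which holds by definition of $\nu$ when $f=\chi_B$, extends to nonnegative simple $f$ by linearity, and then to general $f$ by monotone convergence, the inner integral being measurable in $\omega$ by Lemma \ref{lemma:meas_mass}. Combining this with \eqref{eq:mass_bound_definition} for each $T_\omega$ yields $|T(f,\pi)|\le\int_\Omega|T_\omega(f,\pi)|\,\d\mu(\omega)\le\prod_{i=1}^k\LIP(\pi_i)\int_X|f|\,\d\nu$, so $T\in\cM_k(X)$ and, since $\|T\|$ is the least Borel measure satisfying \eqref{eq:mass_bound_definition}, $\|T\|\le\nu$ as measures; evaluating at a Borel set $B$ gives \eqref{eq:mass_estimate_integral}. (Alternatively, once $T\in\cM_k(X)$, \eqref{eq:mass_estimate_integral} follows from Lemma \ref{lemma:mass_countable_sup}: for disjoint $\{A_i\}\subset\mathcal{A}$ and $1$-Lipschitz $\{\pi^i_j\}\subset D$ one has $\sum_{i=1}^n T(\chi_{A_i\cap B},\pi^i)=\int_\Omega\sum_{i=1}^n T_\omega(\chi_{A_i\cap B},\pi^i)\,\d\mu(\omega)\le\int_\Omega\|T_\omega\|(B)\,\d\mu(\omega)$, and one takes the supremum over such choices.) There is no essential obstacle here: the only inputs beyond elementary measure theory are the measurability assertions of Lemma \ref{lemma:meas_mass}, which make $\nu$ and the inner integrals legitimate, and the remaining work is monotone and dominated convergence.
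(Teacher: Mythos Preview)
Your proof is correct and follows essentially the same approach as the paper: define $\nu(B)=\int\|T_\omega\|(B)\,\d\mu(\omega)$, verify multilinearity and locality pointwise, use dominated convergence with the majorant $\|f\|_\infty L^k\|T_\omega\|(X)$ for joint continuity, and obtain the finite-mass bound (hence \eqref{eq:mass_estimate_integral}) from $|T(f,\pi)|\le\prod_i\LIP(\pi_i)\int|f|\,\d\nu$. You spell out a few steps the paper leaves implicit (the layer-cake identity for $\int|f|\,\d\nu$ and the appeal to Lemma~\ref{lemma:meas_mass} for measurability of $\omega\mapsto\|T_\omega\|(B)$), and your alternative route via Lemma~\ref{lemma:mass_countable_sup} is a pleasant extra, but the core argument is the same.
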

\begin{proof}
By (ii), $T(f, \pi)$ is well-defined.
It is clear that $T$ satisfies the multilinearity and locality axioms.
To see that $T$ has finite mass, observe that map $\nu(B):=\int\|T_\omega\|(B)\,\d\mu(\omega)$, $B\in\mathcal{B}(\X)$, defines a finite Borel measure on $X$ and
\[|T(f, \pi)| \le \prod_{i=1}^k \Lip(\pi_i) \int |f|\,\d\nu.\]
Thus, in particular, \eqref{eq:mass_estimate_integral} holds.

It remains to show that $T$ satisfies the joint continuity axiom.
Let $f\in \Lip_b(X)$, and $\pi^i,\pi\in\Lip(X)^k$ with $\pi^i\rightarrow \pi$ pointwisely on $X$ and $\sup_{i,j}\Lip(\pi^i_j)=:L<\infty$.
Then $|T_\omega(f, \pi^i)|\leq\|f\|_\infty L^k\|T_\omega\|(X)\in L^1(\Omega,\mu)$
and $T_\omega(f, \pi^i)\rightarrow T_\omega(f, \pi)$ for each $\omega$.
By the dominated convergence theorem, we conclude that $T(f, \pi^i)\rightarrow T(f, \pi)$
and so $T$ is a metric current.
\end{proof}
\section{Restriction of currents}
\label{appendix:B}
\begin{lemma}\label{lemma:continuity_McShane}
Let $\X$ be a metric space, $S\subset \X$ a set, and $f,g\colon S\to\R$ $L$-Lipschitz functions.
Define $\tilde{f}\colon \X\to\R$
\begin{equation*}
\tilde{f}(x):=\inf_{y\in S}f(y)+L\sfd(x,y),\qquad x\in\X,
\end{equation*}
and $\tilde{g}$ similarly.
Then $\|\tilde{f}-\tilde{g}\|_\infty=\|f-g\|_\infty$.
\end{lemma}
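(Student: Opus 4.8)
The plan is to prove the two inequalities $\|\tilde f - \tilde g\|_\infty \ge \|f-g\|_\infty$ and $\|\tilde f - \tilde g\|_\infty \le \|f-g\|_\infty$ separately; both are elementary once one records that the McShane extension restricts back to the original function.

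First I would observe that each infimum defining $\tilde f(x)$, $\tilde g(x)$ is finite, so that $\tilde f, \tilde g$ are genuine real-valued functions: fixing any $y_0 \in S$, the triangle inequality and $L$-Lipschitzness of $f$ give $f(y) + L\sfd(x,y) \ge f(y_0) - L\sfd(x,y_0)$ for every $y \in S$, hence $\tilde f(x) \ge f(y_0) - L\sfd(x,y_0) > -\infty$ (and $\tilde f(x) \le f(y_0) + L\sfd(x,y_0) < \infty$). Next I would check that $\tilde f|_S = f$ and likewise $\tilde g|_S = g$: for $x \in S$, taking $y = x$ in the infimum gives $\tilde f(x) \le f(x)$, while $L$-Lipschitzness of $f$ gives $f(y) + L\sfd(x,y) \ge f(x)$ for every $y \in S$, so $\tilde f(x) \ge f(x)$. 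This immediately yields $\|\tilde f - \tilde g\|_\infty \ge \sup_{x \in S}|\tilde f(x) - \tilde g(x)| = \|f - g\|_\infty$.

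For the reverse inequality, set $M := \|f-g\|_\infty$, the case $M = \infty$ being trivial. Fix $x \in \X$ and $\varepsilon > 0$, and choose $y_0 \in S$ with $g(y_0) + L\sfd(x,y_0) \le \tilde g(x) + \varepsilon$. Then
\[
\tilde f(x) \le f(y_0) + L\sfd(x,y_0) \le g(y_0) + M + L\sfd(x,y_0) \le \tilde g(x) + M + \varepsilon.
\]
Letting $\varepsilon \to 0$ gives $\tilde f(x) - \tilde g(x) \le M$, and exchanging the roles of $f$ and $g$ gives $\tilde g(x) - \tilde f(x) \le M$; hence $|\tilde f(x) - \tilde g(x)| \le M$ for every $x \in \X$, i.e.\ $\|\tilde f - \tilde g\|_\infty \le M$. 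Combining the two inequalities proves the claim. There is no genuine obstacle here; the only point requiring a moment's care is the (standard) finiteness of the McShane infimum, dealt with at the start.
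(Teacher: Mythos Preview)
Your proof is correct and follows essentially the same approach as the paper: both establish $\|\tilde f-\tilde g\|_\infty\le M$ by using that $f\le g+M$ on $S$ forces $\tilde f\le\tilde g+M$ on $\X$. The paper phrases this more tersely via monotonicity and translation-invariance of the McShane operator (from $f-M\le g\le f+M$ deduce $\tilde f-M\le\tilde g\le\tilde f+M$), whereas you unpack the same fact with an explicit $\varepsilon$-argument; the extra care you take with finiteness and the extension property is sound but not strictly needed for the argument.
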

\begin{proof}
Since $\tilde{f},\tilde{g}$ extend $f,g$ to $\X$, it is enough to show
that $\|\tilde{f}-\tilde{g}\|_\infty\leq\|f-g\|_\infty=:M$.
We can assume $M<\infty$.
From the inequalities
\begin{equation*}
    f-M\leq g\leq f+M
\end{equation*}
we immediately deduce $\tilde{f}-M\leq \tilde{g}\leq \tilde{f}+M$, concluding the proof.
\end{proof}
\begin{lemma}\label{lemma:currents_joint_continuity_outside_null_set}
Let $\X$ be a complete metric space, $k\in\N$, $T\in\cM_k(\X)$, and assume 
the mass measure $\|T\|$ of $T$ is inner regular by compact sets.
Let $f\in\mathcal{B}^\infty(\X)$, $\pi^i,\pi\in\Lip(\X)^k$ with $\sup_{i,j}\mathrm{Lip}(\pi^i_j)<\infty$ and suppose
$\pi^i\rightarrow\pi$ pointwise on a set of full $\|T\|$-measure.
Then $T(f,\pi^i)\rightarrow T(f,\pi)$.
\end{lemma}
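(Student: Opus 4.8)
The plan is to prove the statement by reducing first to a separable ambient space and to a Lipschitz first argument, and then to localise $T$ to a compact set on which the convergence $\pi^i\to\pi$ upgrades to uniform convergence, so that the joint continuity axiom becomes applicable after a harmless modification of the $\pi^i$. For the reductions: since $\|T\|$ is inner regular by compact sets it is concentrated on a $\sigma$-compact, hence separable, subset of $\X$, so arguing exactly as at the start of the proof of \cref{coro:approximation_by_normal_currents} we may assume $\X$ is separable. Put $L:=\sup_{i,j}\LIP(\pi^i_j)$ and $L':=\max\{L,\max_j\LIP(\pi_j)\}<\infty$. Bounded Lipschitz functions are dense in $L^1(\|T\|)$ (a finite Borel measure on a metric space), so given $f\in\mathcal{B}^\infty(\X)$ and $\varepsilon>0$ we may choose $g\in\Lip_b(\X)$ with $\|g\|_\infty\le\|f\|_\infty$ and $\int|f-g|\,\d\|T\|<\varepsilon$; then \cref{prop:bound_current_*uppergradient} gives $|T(f-g,\pi^i)|\le(L')^k\varepsilon$ and $|T(f-g,\pi)|\le(L')^k\varepsilon$ for all $i$, so it suffices to prove $T(g,\pi^i)\to T(g,\pi)$ for $g\in\Lip_b(\X)$.

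Fix such a $g$ and $\varepsilon>0$. By inner regularity and $\|T\|(\X\setminus E)=0$ there is a compact $K\subseteq E$ with $\|T\|(\X\setminus K)<\varepsilon$. On $K$ the functions $\{\pi^i_j\}_i$ are equi-Lipschitz, hence equicontinuous, and converge pointwise to $\pi_j$; by the classical fact that an equicontinuous, pointwise convergent sequence on a compact set converges uniformly, $\pi^i_j\to\pi_j$ uniformly on $K$, and in particular $\LIP(\pi_j|_K)\le L$. Let $\tilde\pi^i_j,\tilde\pi_j\in\Lip(\X)$ be the McShane extensions (via the infimum formula of \cref{lemma:continuity_McShane}) of $\pi^i_j|_K$ and $\pi_j|_K$; then $\LIP(\tilde\pi^i_j),\LIP(\tilde\pi_j)\le L$, and by \cref{lemma:continuity_McShane} $\|\tilde\pi^i_j-\tilde\pi_j\|_\infty=\|\pi^i_j-\pi_j\|_{C(K)}\to0$, so $\tilde\pi^i_j\to\tilde\pi_j$ pointwise on all of $\X$. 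The joint continuity axiom applied to $T$ (with $g\in\Lip_b(\X)$) thus gives $T(g,\tilde\pi^i)\to T(g,\tilde\pi)$ as $i\to\infty$.

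It remains to estimate $T(g,\pi^i)-T(g,\tilde\pi^i)$ (and similarly $T(g,\pi)-T(g,\tilde\pi)$). By multilinearity,
\[
T(g,\pi^i)-T(g,\tilde\pi^i)=\sum_{l=1}^k T\big(g,\tilde\pi^i_1,\dots,\tilde\pi^i_{l-1},\,\pi^i_l-\tilde\pi^i_l,\,\pi^i_{l+1},\dots,\pi^i_k\big).
\]
The function $\pi^i_l-\tilde\pi^i_l$ vanishes identically on $K$, so by the locality of $*$-upper gradients (which follows from the upper gradient inequality along $\|T\|$-a.e.\ curve fragment, cf.\ \cite{BateErikSoul2024}) we have $|D(\pi^i_l-\tilde\pi^i_l)|_*=0$ $\|T\|$-a.e.\ on $K$. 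Applying \cref{cor:bound_*uppergradient}, together with $|D\,\cdot\,|_*\le\Lip(\cdot)$ $\|T\|$-a.e.\ (which is $\le L$ for $\tilde\pi^i_m,\pi^i_m$ and $\le 2L$ for $\pi^i_l-\tilde\pi^i_l$), each summand is bounded by $2L^k\|g\|_\infty\|T\|(\X\setminus K)<2L^k\|g\|_\infty\varepsilon$; hence $|T(g,\pi^i)-T(g,\tilde\pi^i)|\le 2kL^k\|g\|_\infty\varepsilon$ for all $i$, and likewise $|T(g,\pi)-T(g,\tilde\pi)|\le 2kL^k\|g\|_\infty\varepsilon$. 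Combining this with $T(g,\tilde\pi^i)\to T(g,\tilde\pi)$ yields $\limsup_i|T(g,\pi^i)-T(g,\pi)|\le 4kL^k\|g\|_\infty\varepsilon$, and letting $\varepsilon\downarrow0$ finishes the proof.

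The main obstacle is exactly the last estimate. Since $\pi^i_l$ and its McShane extension $\tilde\pi^i_l$ agree only on $K$ and not on a neighbourhood of $K$, the naive bound using the \emph{pointwise} Lipschitz constant in \cref{prop:bound_current_*uppergradient} leaves an uncontrollable term involving $\|T\|(\partial_\X K)$: in a non-proper space (e.g.\ an infinite-dimensional Banach space) a compact set with small complement need not have small-measure topological boundary, and one cannot in general choose $K$ avoiding this. Passing to the sharper $*$-upper-gradient bound of \cref{cor:bound_*uppergradient} and using that $|D\,\cdot\,|_*$ is local is what resolves this. (Alternatively, once \cref{prop:isometry_of_currents_on_C} is available one may instead regard $T\restr{K}$ as a metric $1$-current on the compact metric space $K$ and invoke joint continuity directly there, with no extension step; but if the present lemma is needed in the proof of that proposition, the $*$-upper-gradient route is the one to take.)
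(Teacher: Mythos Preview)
Your argument has the same skeleton as the paper's---pick a compact $K$ with small complement on which $\pi^i\to\pi$ pointwise, pass to McShane extensions $\tilde\pi^i,\tilde\pi$ via \cref{lemma:continuity_McShane}, and use joint continuity for those---but the final estimate is done very differently. The paper does not go through $*$-upper gradients at all: it simply invokes the strengthened locality of \cite[Theorem 3.5]{AK00} (the same tool already used in \eqref{eq:locality_pi_pii}), which says $T(h,\sigma)=T(h,\sigma')$ whenever each $\sigma_j=\sigma'_j$ on the Borel set $\{h\neq0\}$. Since $\tilde\pi^i_j=\pi^i_j$ on $K\supset\{\chi_Kf\neq0\}$, this gives $T(\chi_Kf,\tilde\pi^i)=T(\chi_Kf,\pi^i)$ immediately, and then
\[
|T(f,\tilde\pi^i)-T(f,\pi^i)|=|T(\chi_{\X\setminus K}f,\tilde\pi^i)-T(\chi_{\X\setminus K}f,\pi^i)|\le 2L^k\|f\|_\infty\,\|T\|(\X\setminus K)
\]
by the mass axiom alone. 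This dispenses with the reduction to $g\in\Lip_b(\X)$, with separability of $\X$, and with \cref{cor:bound_*uppergradient}; the worry you raise about the pointwise Lipschitz constant on $\partial K$ is a red herring once one uses this sharper locality rather than \cref{prop:bound_current_*uppergradient}.

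One genuine issue with your route: your reduction to separable $\X$ ``arguing exactly as at the start of the proof of \cref{coro:approximation_by_normal_currents}'' invokes \cref{lemma:restrictions_of_currents}, whose proof uses the present lemma---so as written the argument is circular. The paper's approach avoids this entirely since neither the strengthened locality nor \cref{lemma:continuity_McShane} needs separability.
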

\begin{proof}
Fix $\varepsilon>0$ and let $K\subset \X$ be a compact set with $\|T\|(\X\setminus K)\leq \varepsilon$, such that $\pi^i\rightarrow \pi$ pointwise on $K$.
Set $L:=\sup_{i,j}\mathrm{Lip}(\pi^i_j)$ and
let $\tilde{\pi}^i$, $\tilde{\pi}$ denote the $L$-Lipschitz extension of $\pi^i\restr{K}$, $\pi\restr{K}$ constructed as in \cref{lemma:continuity_McShane}.
Since $K$ is compact and $\pi^i$ equicontinuous, $\pi^i\rightarrow \pi$ uniformly on $K$.
Then, by \cref{lemma:continuity_McShane}, $\tilde{\pi}^i\rightarrow \tilde{\pi}$ uniformly on $\X$
and so $T(f,\tilde{\pi}^i)\rightarrow T(f,\tilde{\pi})$. \par
By multilinearity, locality, and the definition of $K$, we see that
\begin{equation*}
    |T(f,\tilde{\pi}^i)-T(f,\pi^i)|\leq 2Lk\Mass(T) \|f\|_\infty\varepsilon
\end{equation*}
and similarly for $|T(f,\tilde{\pi})-T(f,\pi)|$.
Since $T(f,\tilde{\pi}^i)\rightarrow T(f,\tilde{\pi})$, we have
\begin{equation*}
    \limsup_{i\rightarrow \infty}|T(f,\pi^i)-T(f,\pi)|\leq 4Lk\Mass(T) \|f\|_\infty\varepsilon.
\end{equation*}
Letting $\varepsilon\rightarrow 0$ concludes the proof.
\end{proof}
Recall that if $\mu$ is a ($\sigma$-)finite Radon measure on a metric space $\X$,
then $\mu(\X\setminus\mathrm{spt}(\mu))=0.$

\begin{lemma}\label{lemma:restrictions_of_currents}
Let $(\X,\sfd)$ be a complete metric space and $C\subset\X$ a closed subset.
Let $T\in\cM_k(\X)$, $k\in\N$, and suppose ${\rm supp}(T)\subset C$ and
that $\|T\|$ is inner regular.
For $(f,\pi)\in D^k(C)$, set
\begin{equation*}
    \widehat{T}(f,\pi):=T(\hat{f},\hat{\pi}),
\end{equation*}
where $\hat{f},\hat{\pi}_1,\dots,\hat{\pi}_k$ are arbitrary Lipschitz extensions of $f,\pi_1,\dots,\pi_k$, respectively, with $\hat{f}$ bounded.
Then $\widehat{T}$ does not depend on the choice of extensions, $\widehat{T}\in\cM_k(C)$,
$\|\widehat{T}\|$ is inner regular, and $\iota_*\widehat{T}=T$, where $\iota\colon C\to\X$ is the inclusion map.
\end{lemma}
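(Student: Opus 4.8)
The plan is to verify the four assertions in order. A running observation is that $\|T\|$ is a finite, inner-regular (hence Radon) Borel measure whose support equals ${\rm supp}(T)\subseteq C$; by the remark preceding the statement, $\|T\|$ is then concentrated on $C$, so $\|T\|(\X\setminus C)=0$. For concreteness I will always take the extensions to be the McShane extensions of Lemma \ref{lemma:continuity_McShane} — truncated at $\pm\|f\|_\infty$ in the case of $f$ — so that $\Lip_\X(\hat\pi_i)=\Lip_C(\pi_i)$, $\hat f$ is bounded by $\|f\|_\infty$, and $\hat f=f$, $\hat\pi_i=\pi_i$ on $C$.

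First I would establish well-definedness, and with it multilinearity, the finite-mass axiom and joint continuity. Let $(f,\pi)\in D^k(C)$ and let $\hat f,\hat f'$, $\hat\pi_i,\hat\pi_i'$ be two admissible choices. Since $\hat\pi_i$ and $\hat\pi_i'$ agree on $C$, the constant sequence $\hat\pi$ converges to $\hat\pi'$ pointwise on $C$, a set of full $\|T\|$-measure, with uniformly bounded Lipschitz constants, so Lemma \ref{lemma:currents_joint_continuity_outside_null_set} gives $T(\hat f,\hat\pi)=T(\hat f,\hat\pi')$. Since $\hat f-\hat f'$ vanishes on $C$, the pointwise mass bound \eqref{eq:bound_current_local_Lip} gives $|T(\hat f,\hat\pi')-T(\hat f',\hat\pi')|\le\int|\hat f-\hat f'|\prod_i\Lip\hat\pi_i'\,\d\|T\|=0$. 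Hence $\widehat T$ is well defined, and multilinearity is inherited from $T$, since a sum of admissible extensions is an admissible extension of the sum. The finite-mass axiom is immediate: $|\widehat T(f,\pi)|=|T(\hat f,\hat\pi)|\le\prod_i\Lip_C(\pi_i)\int_\X|\hat f|\,\d\|T\|=\prod_i\Lip_C(\pi_i)\int_C|f|\,\d\|T\|$, so $\widehat T$ has finite mass and $\|\widehat T\|(A)\le\|T\|(A)$ for every Borel $A\subseteq C$. Joint continuity uses the same device: if $\pi_i^n\to\pi_i$ pointwise on $C$ with $\sup_{n,i}\Lip_C(\pi_i^n)<\infty$, the extensions satisfy $\hat\pi_i^n\to\hat\pi_i$ pointwise on $C$ with bounded Lipschitz constants, so Lemma \ref{lemma:currents_joint_continuity_outside_null_set} yields $\widehat T(f,\pi^n)=T(\hat f,\hat\pi^n)\to T(\hat f,\hat\pi)=\widehat T(f,\pi)$.

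The delicate point — and where I expect the main obstacle — is locality, since in general no extension $\hat\pi_i$ is constant on an $\X$-neighbourhood of $\{\hat f\neq0\}$, so one cannot simply reduce to the locality axiom of $T$. Instead I would use the $*$-upper gradient bound of Corollary \ref{cor:bound_*uppergradient}. Suppose $\pi_i$ is constant on an open $V\subseteq C$ with $\{f\neq0\}\subseteq V$, and fix admissible extensions. The set $O:=\{\hat f\neq0\}$ is open in $\X$, and $O\cap C=\{f\neq0\}\subseteq V$, so $\hat\pi_i$ equals a constant on $O\cap{\rm supp}(\|T\|)$ (recall ${\rm supp}(\|T\|)\subseteq C$). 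Since the restriction of $\|T\|$ to $O$ is concentrated on $O\cap{\rm supp}(\|T\|)$, the $*$-upper gradient $|D\hat\pi_i|_*$ of $\hat\pi_i$ in $(\X,\sfd,\|T\|)$ vanishes $\|T\|$-a.e.\ on $O$: a Lipschitz function that is $\|T\|$-a.e.\ constant on an open set has $\|T\|$-a.e.\ vanishing $*$-upper gradient there, the curve fragments relevant to $\|T\|$ restricted to $O$ concentrating, inside $O$, on ${\rm supp}(\|T\|)$. Corollary \ref{cor:bound_*uppergradient} then gives $|\widehat T(f,\pi)|=|T(\hat f,\hat\pi)|\le\int|\hat f|\,|D\hat\pi_i|_*\prod_{j\neq i}|D\hat\pi_j|_*\,\d\|T\|=0$, because $\hat f$ vanishes off $O$ while $|D\hat\pi_i|_*=0$ $\|T\|$-a.e.\ on $O$. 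This completes $\widehat T\in\cM_k(C)$.

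Finally I would record the two remaining assertions. Inner regularity of $\|\widehat T\|$ follows from $\|\widehat T\|(A)\le\|T\|(A)$ for Borel $A\subseteq C$, since $\|T\|$ regarded as a measure on $C$ is inner regular by compact sets — compact subsets of $C$ are precisely the compact subsets of $\X$ contained in $C$ — and any finite Borel measure dominated by one that is inner regular by compact sets is itself inner regular. And for $(g,\rho)\in D^k(\X)$, since $g$ and the $\rho_i$ are themselves admissible extensions of their restrictions to $C$, well-definedness gives $\iota_*\widehat T(g,\rho)=\widehat T(g|_C,\rho|_C)=T(g,\rho)$, that is, $\iota_*\widehat T=T$.
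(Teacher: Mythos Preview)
Your verification of well-definedness, multilinearity, the finite-mass bound, joint continuity, inner regularity, and $\iota_*\widehat T=T$ is correct and follows the same lines as the paper (which treats these briefly). One small slip: for the $f$-argument in well-definedness you cite \eqref{eq:bound_current_local_Lip}, but Proposition \ref{prop:bound_current_*uppergradient} assumes $\X$ separable, which the present lemma does not; the cruder bound \eqref{eq:mass_bound_definition} suffices and avoids the issue.

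The real divergence---and a genuine gap---is in the locality argument. First, Corollary \ref{cor:bound_*uppergradient} again requires $\X$ separable, so invoking it here is not justified. Second, and independently, your key claim that $|D\hat\pi_i|_*=0$ $\|T\|$-a.e.\ on $O$ because $\hat\pi_i$ is $\|T\|$-a.e.\ constant on $O$ is asserted rather than proved: $\hat\pi_i$ equals $c$ only on $O\cap C$, which is not open in $\X$, and the heuristic about curve fragments ``concentrating inside $O$ on ${\rm supp}(\|T\|)$'' is not a substitute for a locality lemma for $*$-upper gradients under $\mu$-a.e.\ (as opposed to everywhere) agreement. The paper's intended argument, left implicit in the phrase ``it is not difficult to see'', is both shorter and hypothesis-free: since $\|T\|(\X\setminus C)=0$, the mass bound \eqref{eq:mass_bound_definition} gives $T(\hat f,\hat\pi)=T(\chi_C\hat f,\hat\pi)$; now $\{\chi_C\hat f\neq0\}=\{f\neq0\}\subseteq V$ and $\hat\pi_i|_V=c$, so the strong locality of \cite[Theorem 3.5]{AK00} (already used at \eqref{eq:locality_pi_pii}) yields $T(\chi_C\hat f,\hat\pi)=T(\chi_C\hat f,\hat\pi_1,\dots,c,\dots,\hat\pi_k)=0$. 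No separability, no upper-gradient machinery.
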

\begin{proof}
The functional $\widehat{T}$ does not depend on the choice of extensions $\hat{f},\hat{\pi}_j$
because of the locality property of $T$ and $\|T\|(\X\setminus C)=0$.
It is not difficult to see that $\widehat{T}$ is multilinear, local, and that
$\Mass(\widehat{T})\leq\Mass(T)$.
By \cref{lemma:currents_joint_continuity_outside_null_set}, it follows that $\widehat{T}$ satisfies
also the joint continuity axiom, and therefore is a metric $k$-current. For $(f,\pi) \in D^k(\X)$, we have
\begin{equation*}
    \iota_*\widehat{T}(f,\pi)=\widehat{T}(f \circ \iota, \pi \circ \iota)= T(\widehat{f \circ \iota}, \widehat{\pi \circ \iota})= T(f,\pi),
\end{equation*}
thus $\iota_*\widehat{T}=T$.
Also, since
$\iota_*\|\widehat{T}\|\leq\|T\|$,
$\|\widehat{T}\|$ is also concentrated on a $\sigma$-compact set
and is therefore inner regular.
\end{proof}

\begin{proposition}
\label{prop:isometry_of_currents_on_C}
Let $(\X,\sfd)$ be a complete metric space, $C\subset\X$ a closed set,
and let $\iota\colon C\to \X$ denote the inclusion map.
Then, for $k\in\N$, the pushforward operator
\begin{equation*}
\iota_* \colon \{T \in \cM_k(C):\, \|T\| \text{ is inner regular}\} \to \{ T \in \cM_k(X):\,{\rm supp}(T) \subset C,\,\|T\| \text{ is inner regular}  \}
\end{equation*}
is an isometric isomorphism,
where both spaces are endowed with the corresponding mass norm.
Moreover,
\begin{equation}\label{eq:isometry_of_currents_1}
\begin{aligned}
\iota_* \colon \{T \in \cN_k(C):\,& \|T\|+\|\partial T\| \text{ is inner regular}\} \to \\
&\{ T \in \cN_k(X):\,{\rm supp}(T) \subset C,\,\|T\|+\|\partial T\| \text{ is inner regular}  \}
\end{aligned}
\end{equation}
is an isometric isomorphism
when both spaces are equipped with either the mass norm
or normal mass norm.
\end{proposition}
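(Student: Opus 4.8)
The plan is to establish, in order, that $\iota_*$ is a well-defined linear map with the stated range, that it is injective and surjective, that it preserves the mass measure (hence the mass norm), and finally to transfer the conclusion to the normal setting via the identity $\partial\circ\iota_*=\iota_*\circ\partial$. For well-definedness: given $T\in\cM_k(C)$ with $\|T\|$ inner regular, $\iota_* T\in\cM_k(\X)$, and ${\rm supp}(\iota_* T)\subset C$ because $\iota_* T(f,\pi)=T(f\vert_C,\pi\vert_C)$ vanishes whenever $f$ is null on a neighbourhood of $C$; moreover $\|\iota_* T\|\le\iota_*\|T\|$ since $\iota$ is $1$-Lipschitz, and as $\|T\|$ is concentrated on a $\sigma$-compact subset of $C$, so is $\iota_*\|T\|$, whence $\|\iota_* T\|$ is inner regular. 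Injectivity is immediate from the fact that every $L$-Lipschitz function on $C$ extends to an $L$-Lipschitz function on $\X$ (McShane): if $\iota_* T=0$ then $T(f,\pi)=0$ for all $(f,\pi)\in D^k(C)$. Surjectivity onto $\{T\in\cM_k(\X):{\rm supp}(T)\subset C,\ \|T\|\text{ inner regular}\}$ is precisely the content of Lemma \ref{lemma:restrictions_of_currents}, which for such $T$ produces $\widehat T\in\cM_k(C)$ with $\|\widehat T\|$ inner regular and $\iota_*\widehat T=T$.

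For the isometry I would combine two inequalities. On one side $\Mass(\iota_* T)\le\Mass(T)$ because $\iota$ is $1$-Lipschitz. On the other side, apply Lemma \ref{lemma:restrictions_of_currents} to $\iota_* T$: this yields $\widehat{\iota_* T}\in\cM_k(C)$ with $\iota_*\widehat{\iota_* T}=\iota_* T$ and, from its proof, $\iota_*\|\widehat{\iota_* T}\|\le\|\iota_* T\|$; injectivity forces $\widehat{\iota_* T}=T$, so $\iota_*\|T\|\le\|\iota_* T\|$. Since the reverse inequality $\|\iota_* T\|\le\iota_*\|T\|$ always holds, we conclude $\|\iota_* T\|=\iota_*\|T\|$ as measures, and in particular $\Mass(\iota_* T)=\Mass(T)$; by linearity $\Mass(\iota_* T_1-\iota_* T_2)=\Mass(T_1-T_2)$. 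Thus $\iota_*$ is an isometric isomorphism for the mass norm.

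For \eqref{eq:isometry_of_currents_1} I would use that $\iota$ being Lipschitz gives $\partial(\iota_* T)=\iota_*(\partial T)$. Hence $T\in\cN_k(C)$ iff $\iota_* T\in\cN_k(\X)$; applying the already proven first part in degree $k$ and in degree $k-1$ (the degenerate case $k-1=0$ of $0$-currents being handled directly by the same reasoning) shows $\Mass(\partial\iota_* T)=\Mass(\iota_*\partial T)=\Mass(\partial T)$, so the normal mass $T\mapsto\Mass(T)+\Mass(\partial T)$ is preserved, and it also identifies $\|\partial\iota_* T\|$ with $\iota_*\|\partial T\|$, so inner regularity of $\|T\|+\|\partial T\|$ transfers to $\|\iota_* T\|+\|\partial\iota_* T\|$. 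Conversely, for $T\in\cN_k(\X)$ with ${\rm supp}(T)\subset C$ and $\|T\|+\|\partial T\|$ inner regular, Lemma \ref{lemma:restrictions_of_currents} gives $\widehat T$; since ${\rm supp}(\partial T)\subset{\rm supp}(T)\subset C$, the same lemma applied to $\partial T$ together with injectivity gives $\partial\widehat T=\widehat{\partial T}$ with $\|\partial\widehat T\|$ inner regular, so $\widehat T$ lies in the domain of the map in \eqref{eq:isometry_of_currents_1}. Combined with the mass identities this shows \eqref{eq:isometry_of_currents_1} is an isometric isomorphism for both the mass and the normal mass norm.

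The step I expect to need the most care is the exact equality $\|\iota_* T\|=\iota_*\|T\|$ rather than merely the inequality $\le$: the reverse inequality is not formal and genuinely relies on Lemma \ref{lemma:restrictions_of_currents} together with injectivity of $\iota_*$. A minor point to watch is the degree bookkeeping in the normal case, where one must also invoke the statement for $(k-1)$-currents, including the case of $0$-currents not literally covered by the preceding lemmas as stated.
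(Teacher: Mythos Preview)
Your proposal is correct and follows essentially the same route as the paper: both establish the inverse map via Lemma~\ref{lemma:restrictions_of_currents} and obtain the reverse mass inequality from the estimate $\|\widehat{T}\|\le\|T\|\restr{C}$ (equivalently $\iota_*\|\widehat{T}\|\le\|T\|$), the only cosmetic difference being that the paper re-derives this estimate explicitly via McShane extensions while you cite it from the proof of the lemma and combine it with injectivity. Your remark about the $k=0$ case is fair but harmless, since $0$-currents are finite signed measures and the corresponding statement is immediate.
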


\begin{proof}
    Let $Z_k(C):=\{T \in \cM_k(C):\, \|T\| \text{ is inner regular}\}$ and $Z_k(\X;C):= \{ T \in \cM_k(\X):\,{\rm supp}(T) \subset C,\,\|T\| \text{ is inner regular}  \}$. 
    Throughout, for $g\in\Lip(C)$, we let $\widehat{g}$ denote any Lipschitz extension of $g$
    to $\X$, satisfying $\|\widehat{g}\|_\infty=\|g\|_{\infty}$ and $\LIP(\widehat{g})= \LIP(g)$. If $g=(g_1,\dots, g_k)\in\Lip(C)^k$, we set $\widehat{g}=(\widehat{g}_1,\dots,\widehat{g}_k)$.
    
    Let $T \in Z_k(C)$.
    It is straigthforward to check that $\iota_* T \in Z_k(\X;C)$ and $\| \iota_* T\| \le \iota_* \|T\|$.
    Let $\Phi \colon Z_k(\X;C) \to Z_k(C)$ be the map defined as $\Phi(T):=\widehat{T}$, where $\widehat{T}$ is as in Lemma \ref{lemma:restrictions_of_currents}. The same lemma yields that $\Phi$ is well-defined and $\iota_*\Phi(T)=T$. Moreover, $\Phi(\iota_*T) = T$, because for $(f, \pi) \in D^k(C)$
    \begin{equation*}
        \Phi(\iota_*T)(f,\pi)=T(\widehat{f}\circ\iota,\widehat{\pi}\circ\iota)=T(f,\pi),
    \end{equation*}
    where we have used $(\widehat{f} \circ \iota, \widehat{\pi}\circ \iota)=(f,\pi)$. Thus, $\iota_* \colon Z_k(C) \to Z_k(\X;C)$ is linear and bijective.
    
    It remains to prove it is an isometry. We compute for $S \in Z_k(\X;C)$ and $(f,\pi) \in D^k(\X)$
    \begin{equation*}
        |\Phi(S)(f, \pi)| \le \prod_{i=1}^k \LIP(\widehat{\pi}_i) \int |\widehat{f}|\,\d \|S\| = \prod_{i=1}^k \LIP(\pi_i) \int_C |f|\,\d \|S\|
    \end{equation*}
    thus $\|\Phi(S)\| \le \|S\|$. This together with $\iota_*$ being $1$-Lipschitz gives $\Mass(\iota_*T)=\Mass(T)$ for $T \in Z_k(C)$.

    We now verify the last claim of the statement.
    Let $W_k(C)$ and $W_k(\X;C)$ denote, respectively, the left and right-hand side of 
    \eqref{eq:isometry_of_currents_1} and observe that
    $W_k(C)=\{T\in Z_k(C)\colon \partial T\in Z_{k-1}(C)\}$
    and $W_k(\X;C)=\{T\in Z_k(\X;C)\colon \partial T\in Z_{k-1}(\X;C)\}$.
    It is clear that $\iota_*(W_k(C))\subset W_k(X;C)$
    and that $\iota_*$ is an isometric embedding with either norm.
    (For the normal mass, we use the fact that pushforward and boundary operators commute.)
    Hence, we only need to prove that $\iota_*$ is onto.
    Suppose $T\in W_k(\X;C)$ and consider $\Phi(T)\in Z_k(C)$. Since $\mathrm{supp}(\partial T)\subset\mathrm{supp}(T)$,
    we have $\partial T\in Z_{k-1}(\X;C)$ and therefore $\Phi(\partial T)\in Z_{k-1}(C)$.
    From the definitions and \ref{lemma:restrictions_of_currents}
    we have $\partial (\Phi(T))=\Phi(\partial T)$, proving $\Phi(T)\in W_k(C)$ and therefore $\iota_*(W_k(C))=W_k(\X;C)$.
\end{proof}

\bibliographystyle{abbrv}
\bibliography{biblio.bib}

\begin{thebibliography}{10}

\bibitem{AlbertiMarcheseFlatChains}
G.~Alberti and A.~Marchese.
\newblock On the structure of flat chains with finite mass.
\newblock {\em arXiv:2311.06099}, 2023.

\bibitem{AGS_gradient_flows}
L.~Ambrosio, N.~Gigli, and G.~Savar\'e.
\newblock {\em Gradient flows in metric spaces and in the space of probability measures}.
\newblock Lectures in Mathematics ETH Z\"urich. Birkh\"auser Verlag, Basel, second edition, 2008.

\bibitem{AK00}
L.~Ambrosio and B.~Kirchheim.
\newblock Currents in metric spaces.
\newblock {\em Acta Math.}, 185(1):1--80, 2000.

\bibitem{Ambrosio_Renzi_Vitillar_local_Smirnov}
L.~Ambrosio, F.~Renzi, and F.~Vitillaro.
\newblock The superposition principle for local 1-dimensional currents.
\newblock {\em arXiv:2503.18157}, 2025.

\bibitem{ArroyoRabasaBouc2025}
A.~Arroyo-Rabasa and G.~Bouchitté.
\newblock Structural properties of one-dimensional metric currents: {SBV}-representations, connectedness and the flat chain conjecture.
\newblock {\em In the arXiv repository system}, 2025.

\bibitem{BassoMartiWenger2023}
G.~Basso, D.~Marti, and S.~Wenger.
\newblock Geometric and analytic structures on metric spaces homeomorphic to a manifold.
\newblock {\em arXiv:2303.13490}, 2023.

\bibitem{Bate2015_Str_of_meas}
D.~Bate.
\newblock Structure of measures in {L}ipschitz differentiability spaces.
\newblock {\em J. Amer. Math. Soc.}, 28(2):421--482, 2015.

\bibitem{BateErikSoul2024}
D.~Bate, S.~Eriksson-Bique, and E.~Soultanis.
\newblock Fragment-wise differentiable structures.
\newblock {\em arXiv:2402.11284}, 2024.

\bibitem{BKO23}
D.~Bate, I.~Kangasniemi, and T.~Orponen.
\newblock Cheeger's differentiation theorem via the multilinear {K}akeya inequality.
\newblock {\em Pure Appl. Funct. Anal.}, 8(6):1587--1602, 2023.

\bibitem{bate2025albertirepresentationsrectifiabilitymetric}
D.~Bate and J.~Weigt.
\newblock Alberti representations, rectifiability of metric spaces and higher integrability of measures satisfying a pde.
\newblock {\em arXiv:2501.02948}, 2025.

\bibitem{BonicattoDelNinPasqualetto2022}
P.~Bonicatto, G.~Del~Nin, and E.~Pasqualetto.
\newblock Decomposition of integral metric currents.
\newblock {\em J. Funct. Anal.}, 282(7):Paper No. 109378, 28, 2022.

\bibitem{BridsonHaefliger}
M.~R. Bridson and A.~Haefliger.
\newblock {\em Metric spaces of non-positive curvature}, volume 319 of {\em Grundlehren der mathematischen Wissenschaften [Fundamental Principles of Mathematical Sciences]}.
\newblock Springer-Verlag, Berlin, 1999.

\bibitem{CavallucciSambusetti}
N.~Cavallucci and A.~Sambusetti.
\newblock Packing and doubling in metric spaces with curvature bounded above.
\newblock {\em Math. Z.}, 300(3):3269--3314, 2022.

\bibitem{Conway_fun_analysis}
J.~B. Conway.
\newblock {\em A course in functional analysis}, volume~96 of {\em Graduate Texts in Mathematics}.
\newblock Springer-Verlag, New York, second edition, 1990.

\bibitem{DePhilippisRindler_A_free}
G.~De~Philippis and F.~Rindler.
\newblock On the structure of {$\mathscr{A}$}-free measures and applications.
\newblock {\em Ann. of Math. (2)}, 184(3):1017--1039, 2016.

\bibitem{DescombesLang2015}
D.~Descombes and U.~Lang.
\newblock Convex geodesic bicombings and hyperbolicity.
\newblock {\em Geom. Dedicata}, 177:367--384, 2015.

\bibitem{EB_Teri_p_weak}
S.~Eriksson-Bique and E.~Soultanis.
\newblock Curvewise characterizations of minimal upper gradients and the construction of a {S}obolev differential.
\newblock {\em Anal. PDE}, 17(2):455--498, 2024.

\bibitem{Federer69}
H.~Federer.
\newblock {\em Geometric measure theory}, volume Band 153 of {\em Die Grundlehren der mathematischen Wissenschaften}.
\newblock Springer-Verlag New York, Inc., New York, 1969.

\bibitem{FedererFleming1960}
H.~Federer and W.~H. Fleming.
\newblock Normal and integral currents.
\newblock {\em Ann. of Math. (2)}, 72:458--520, 1960.

\bibitem{Hajlasz2018}
P.~Haj\l~asz, S.~Malekzadeh, and S.~Zimmerman.
\newblock Weak {BLD} mappings and {H}ausdorff measure.
\newblock {\em Nonlinear Anal.}, 177:524--531, 2018.

\bibitem{HKST2015}
J.~Heinonen, P.~Koskela, N.~Shanmugalingam, and J.~T. Tyson.
\newblock {\em Sobolev spaces on metric measure spaces}, volume~27 of {\em New Mathematical Monographs}.
\newblock Cambridge University Press, Cambridge, 2015.
\newblock An approach based on upper gradients.

\bibitem{LangKleiner2020}
B.~Kleiner and U.~Lang.
\newblock Higher rank hyperbolicity.
\newblock {\em Invent. Math.}, 221(2):597--664, 2020.

\bibitem{Lang2013}
U.~Lang.
\newblock Injective hulls of certain discrete metric spaces and groups.
\newblock {\em J. Topol. Anal.}, 5(3):297--331, 2013.

\bibitem{PaoliniStepanov_acyclic}
E.~Paolini and E.~Stepanov.
\newblock Decomposition of acyclic normal currents in a metric space.
\newblock {\em J. Funct. Anal.}, 263(11):3358--3390, 2012.

\bibitem{PaoliniStepanov_cycles}
E.~Paolini and E.~Stepanov.
\newblock Structure of metric cycles and normal one-dimensional currents.
\newblock {\em J. Funct. Anal.}, 264(6):1269--1295, 2013.

\bibitem{DePauw}
T.~D. Pauw.
\newblock Quantified compactness in lipschitz-free spaces of $[-1,1]^n$.
\newblock {\em arXiv:2504.19100}, 2025.

\bibitem{Schioppa_derivations}
A.~Schioppa.
\newblock Derivations and {A}lberti representations.
\newblock {\em Adv. Math.}, 293:436--528, 2016.

\bibitem{Schioppa_currents}
A.~Schioppa.
\newblock Metric currents and {A}lberti representations.
\newblock {\em J. Funct. Anal.}, 271(11):3007--3081, 2016.

\bibitem{Smirnov1993}
S.~K. Smirnov.
\newblock Decomposition of solenoidal vector charges into elementary solenoids, and the structure of normal one-dimensional flows.
\newblock {\em Algebra i Analiz}, 5(4):206--238, 1993.

\bibitem{Song2024}
A.~Song.
\newblock Hyperbolic groups and spherical minimal surfaces.
\newblock {\em arXiv:2402.10869}, 2024.

\bibitem{WeaverBook}
N.~Weaver.
\newblock {\em Lipschitz algebras}.
\newblock World Scientific Publishing Co. Pte. Ltd., Hackensack, NJ, second edition, 2018.

\bibitem{Wenger2011}
S.~Wenger.
\newblock Compactness for manifolds and integral currents with bounded diameter and volume.
\newblock {\em Calc. Var. Partial Differential Equations}, 40(3-4):423--448, 2011.

\end{thebibliography}

\end{document}